\documentclass[a4paper,reqno]{amsart}

\usepackage{amsmath, amssymb, amsthm}
\usepackage{enumitem}
\usepackage{hyperref}
\hypersetup{
  colorlinks=false,
  pdfborder={0 0 0}
}
\usepackage{xcolor}
\newtheorem{theorem}{Theorem}[section]
\newtheorem{lemma}[theorem]{Lemma}
\newtheorem{proposition}[theorem]{Proposition}
\newtheorem{corollary}[theorem]{Corollary}

\theoremstyle{definition}
\newtheorem{definition}[theorem]{Definition}

\theoremstyle{definition}
\newtheorem{remark}[theorem]{Remark}

\theoremstyle{definition}
\newtheorem{example}[theorem]{Example}

\theoremstyle{definition}
\newtheorem{assumption}[theorem]{Assumption}

\theoremstyle{definition}
\newtheorem{notation}[theorem]{Notation}

\theoremstyle{plain}
\newtheorem{mainresult}{Main Result}

\makeatletter

\@addtoreset{equation}{subsection}
\makeatother

\makeatletter
\renewenvironment{proof}[1][\proofname]{%
  \par\pushQED{\qed}%
  \normalfont%
  \topsep6\p@\@plus6\p@\relax
  \trivlist
  \item[\hskip\labelsep\bfseries #1\@addpunct{.}]%
}{%
  \popQED\endtrivlist\@endpefalse
}
\makeatother

\newcommand{\Poly}{\mathrm{Poly}}

\newcommand{\supp}{\operatorname{supp}}

\newcommand{\U}{\mathbf{U}}
\newcommand{\V}{\mathbf{V}}

\newcommand{\Stat}{\mathsf{Stat}}

\makeatletter
\let\oldtocsection=\tocsection
\renewcommand{\tocsection}[3]{%
  \oldtocsection{#1}{#2}{\textbf{#3}}%
}
\let\oldtocsubsection=\tocsubsection
\renewcommand{\tocsubsection}[3]{%
  \oldtocsubsection{#1}{#2}{\hspace*{0.25em}#3}%
}
\makeatother

\author{Yoshiyuki Endo}
\address{Department of Mathematics, Nagoya University}
\email{endo.yoshiyuki.e2@s.mail.nagoya-u.ac.jp}

\title[Polynomial RSCC and the Probability of Tending to Infinity]{Polynomial Random Dynamical Systems with Complete Connections and the Probability of Tending to Infinity}

\subjclass[2020]{37F10, 37H10, 60J05}
\keywords{
random complex dynamics,
random systems with complete connections (RSCC),
escaping probability,
stationary distributions
}

\begin{document}

\begin{abstract}
We study polynomial random dynamical systems with complete connections on the
Riemann sphere. In this framework, the choice of the next polynomial map is
governed by a state-dependent rule with memory, extending both i.i.d.\ random
dynamics and non-i.i.d.\ Markovian models.

For each initial state, we define the probability that the random orbit tends to infinity. We prove that it is locally constant on the Fatou
set, and that if all kernel Julia sets are empty, then it is continuous on the
whole space. 

We also introduce stationary-averaged escaping probabilities with respect to
stationary distributions of the induced state chain. Under the same
kernel-emptiness assumption, these averaged probabilities are continuous. In
addition, for each point of the Riemann sphere, the set of all possible
stationary-averaged values is shown to be a compact interval determined by
ergodic stationary distributions. We further give a sufficient condition for
the stationary-averaged escaping probability to be everywhere positive and
nontrivial.

Finally, we provide examples showing RSCC-specific phenomena, including
reinforcement-induced discontinuity, recovery of continuity under truncation,
and genuinely mixed escaping behavior produced by stationary averaging.
\end{abstract}

\maketitle

\tableofcontents

\section{Introduction}

\subsection{Background and Motivation}
\label{subsec:background-motivation}

The classical theory of complex dynamics begins with the iteration of a single
holomorphic map, as developed in the pioneering works of Julia and Fatou.
A central feature of this theory is the decomposition into stable and unstable
regions, represented by the Fatou and Julia sets
\cite{JMPA_1918_8_1__47_0,BSMF_1919__47__161_0}; see, for instance,
\cite{MR1128089,MR1230383,MR2193309} for standard references.
A natural generalization is to study compositions of several maps rather than
iterates of one fixed map.
This viewpoint led to the theory of rational semigroups initiated by
Hinkkanen and Martin \cite{MR1397693,MR1429333}.

\smallskip
Randomization adds another layer to this picture.
At each step, the map is chosen according to a probability law, and one studies the resulting random dynamics.
Random complex dynamics was initiated by Forn{\ae}ss and Sibony \cite{MR1145616}
and further developed by Sumi and others; see, for instance,
\cite{MR2323605,MR2747724,MR3084426,MR3333714,MR4002398,MR4268827,MR4407234,MR4803667}.
One of the central themes in this direction is the study of the probability
that a random orbit tends to infinity, that is, the escaping probability.
In the polynomial setting, this quantity is closely related to the structure
of Julia sets, filled-in Julia sets, and averaged dynamical behavior.

\smallskip
In the i.i.d.\ setting, Sumi \cite{MR2747724} developed a detailed theory of random complex dynamics
for independently chosen rational maps and established a family of cooperation principles.
In particular, Cooperation Principle~I asserts that if the kernel Julia set is empty,
then the chaos of the averaged dynamics disappears.
This is a genuinely randomness-induced phenomenon, which has no deterministic analogue.
Later, Sumi and Watanabe \cite{MR4002398} extended this point of view to a non-i.i.d.\ setting
driven by a Markov chain, formulated in the language of graph directed Markov systems.
In that framework they studied, among other things, the probability of tending to infinity
for random iterations of polynomials.

\smallskip
The present paper is motivated by the question of how far one can extend this circle of ideas
beyond the i.i.d.\ and Markovian settings.
For this purpose, we work in the probabilistic framework of random systems with complete connections
(RSCC), rooted in the notion of dependence with complete connections introduced by
Onicescu and Mihoc and developed systematically by Iosifescu and Grigorescu
\cite{MR1070097}.
An RSCC provides a flexible mechanism for generating dependent random sequences:
the law of the next index is determined by a state variable, and this state is updated recursively.
Thus, although the state process itself is Markovian, the induced sequence of chosen indices
typically has memory and is in general neither independent nor Markovian.

\smallskip
RSCCs provide a flexible probabilistic framework for stochastic processes that are not necessarily Markovian and may exhibit dependence on the entire past.
Because of this generality, they appear in a variety of mathematical contexts.
In fractal geometry and ergodic theory, \cite{MR4396787} develops a unified framework encompassing countable iterated function systems with overlaps, Smale endomorphisms, and RSCCs, and establishes geometric and dimensional properties of stationary measures.
Connections with statistical mechanics have also been explored in \cite{MR2041831,MR2123648}, where RSCCs are reformulated in a structure parallel to Gibbs theory, thereby clarifying their thermodynamic features.
In number theory, RSCCs have been applied to continued fraction expansions; see \cite{MR3523394,MR4075402}.
Furthermore, in time series analysis, \cite{MR4140544} provides general conditions ensuring stationarity, ergodicity, and mixing properties for infinite-memory processes, including observation-driven models with exogenous covariates arising in finance, economics, and climate data.
However, to the best of the author's knowledge, RSCCs have not been systematically employed as a foundation for random complex dynamics, and in particular for the study of the probability that a random orbit tends to infinity.

\smallskip
In \cite{endo2026juliafatoutheoryrandomsystems}, the author developed a Julia--Fatou theory for random dynamical systems with complete connections generated by continuous maps on compact metric spaces.
The present paper specializes that general framework to polynomial dynamics on the Riemann sphere
and focuses on escaping phenomena.
More precisely, we study polynomial random dynamical systems in which the choice of the next polynomial map
is governed by an RSCC.
This allows us to treat, in a single framework, both the i.i.d.\ models studied in \cite{MR2747724}
and the Markovian models studied in \cite{MR4002398}, while at the same time allowing genuinely state-dependent dynamics with memory.

\smallskip
Our main object is the escaping probability
\[
T_{\infty,\tilde\tau_w}(z),
\]
which describes the probability that the random orbit starting from
$z\in\widehat{\mathbb C}$ tends to $\infty$ when the initial state is $w\in W$.
We study this quantity from two complementary viewpoints.
The first is the \emph{statewise} viewpoint, where one fixes the initial state and analyzes
the dependence on the spatial variable.
The second is the \emph{stationary-averaged} viewpoint, where one averages the statewise escaping probabilities
with respect to stationary distributions of the induced state chain.
This second viewpoint is particularly natural in the RSCC setting, since the state variable itself evolves dynamically and stationary distributions encode the long-term statistical behavior of the selection mechanism.

\smallskip
The aim of this paper is to clarify how the RSCC structure influences the geometry of non-escaping sets,
the regularity of escaping probabilities, and the effect of stationary averaging.
In particular, the examples in the final section show that reinforcement-type state dependence may create phenomena
that do not appear in the i.i.d.\ case, while suitable truncations can restore continuity.
In this sense, the paper may be viewed as a first step toward a theory of
\emph{RSCC-induced phenomena} in random complex dynamics.

\subsection{Setting and Main Results}
\label{subsec:main-results}

We briefly describe the setting and the main results. Precise definitions and
standing assumptions are given in the subsequent sections.

We work with a random system with complete connections, abbreviated as an RSCC.
It consists of a state space $(W,\mathcal W)$, an index space $(X,\mathcal X)$,
an update map $u:W\times X\to W$, and a transition probability function
$P:W\times\mathcal X\to[0,1]$; see Definition~\ref{def:RSCC}.
By Theorem~\ref{thm:RSCC-existence}, once an initial state $w\in W$ is fixed,
the RSCC generates a random sequence of indices
$(\xi_n)_{n\in\mathbb N}\subset X$ together with an induced state process.
In general, the law of the next index depends on the current state, and hence on
the past history through the state update.

In the present paper, we study polynomial random dynamical systems generated in
this way on the Riemann sphere $\widehat{\mathbb C}$.
For each index $x\in X$, we fix a Borel probability measure
$\tau_x\in \mathfrak M_1(\Poly)$ and write
\[
\Gamma_x:=\supp\tau_x \subset \Poly.
\]
Thus, after the index $x$ is selected by the RSCC rule, a polynomial map is
chosen according to $\tau_x$ and applied to the current point of
$\widehat{\mathbb C}$.
This yields a polynomial RSCC
\[
S_\tau
=
\{(W,\mathcal W),(X,\mathcal X),u,P,\{\Gamma_x\}_{x\in X}\}.
\]

For each initial state $w\in W$, the pair consisting of the RSCC and the family
$\tau=\{\tau_x\}_{x\in X}$ induces a natural probability measure
$\tilde\tau_w$ on the space of admissible polynomial paths
$\Xi_w(S_\tau)$; see Definition~\ref{def:Pwtilde}.
Using this measure, we define the statewise escaping probability
\[
T_{\infty,\tilde\tau_w}(z),
\qquad z\in\widehat{\mathbb C},
\]
which is the probability that the random orbit starting from $z$ tends to
$\infty$ under the random dynamics associated with the initial state $w$.

For each state $w\in W$, let $H_w(S_\tau)$ denote the family of all admissible
finite polynomial compositions starting from $w$.
Using this family, we define the Julia set $J_w(S_\tau)$, the Fatou set
$F_w(S_\tau)$, and the kernel Julia set $J_{\ker,w}(S_\tau)$; see
Definition~\ref{def:Julia-Fatou-kernel-RSCC}.
These sets play the same organizing role as in the i.i.d.\ setting of
\cite{MR2747724} and the GDMS setting of \cite{MR4002398}, but now in a
genuinely state-dependent framework.

A second point of view is obtained by introducing the product-space function
\[
\mathbb T_{\infty,\tau}(z,w):=T_{\infty,\tilde\tau_w}(z),
\qquad (z,w)\in \widehat{\mathbb C}\times W.
\]
This function records simultaneously the dependence on the spatial variable $z$
and on the state variable $w$.

Our first result concerns the regularity of the statewise escaping probability.
It shows that the escaping probability is locally constant on the Fatou set, and
that under the emptiness of all kernel Julia sets it becomes continuous on the
whole product space.

\begin{mainresult}[Lemma~\ref{lem:escape-local-constancy-Fatou}, Theorem~\ref{thm:continuity-product-escape}, and Corollary~\ref{cor:continuity-statewise-escape}]
\label{mr:statewise-regularity}
For each state $w\in W$, the statewise escaping probability
$T_{\infty,\tilde\tau_w}\colon \widehat{\mathbb C}\to[0,1]$
is locally constant on the Fatou set $F_w(S_\tau)$. If, in addition,
$J_{\ker,w}(S_\tau)=\emptyset$ for all $w\in W$, then the product-space
escaping function
$\mathbb T_{\infty,\tau}(z,w):=T_{\infty,\tilde\tau_w}(z)$
is continuous on $\widehat{\mathbb C}\times W$. In particular, for each fixed
state $w\in W$, the function $T_{\infty,\tilde\tau_w}$ is continuous on
$\widehat{\mathbb C}$.
\end{mainresult}

Thus, outside the Julia set, the escaping behavior has no local variation.
Moreover, the emptiness condition on the kernel Julia sets gives a sufficient
criterion ensuring that no discontinuity remains in either the spatial variable
or the state variable.

For each state $w\in W$, we also consider the set
\[
K_w(S_\tau)
:=
\left\{
z\in\widehat{\mathbb C}
:\,
\{\gamma(z):\gamma\in H_w(S_\tau)\}
\text{ is bounded in } \mathbb C
\right\},
\]
that is, the set of points whose admissible forward images remain bounded in
$\mathbb C$. This set will be called the smallest filled-in Julia set at $w$;
see Definition~\ref{def:smallest-filledin-RSCC} for the precise definition.

We next relate the escaping probability to this filled-in set.
The next result identifies the zero set of the escaping probability with
$K_w(S_\tau)$. This gives a geometric characterization of the region where
escape never occurs.

\begin{mainresult}[Proposition~\ref{prop:char-filledin-escape-compact}]
\label{mr:filled-in-relation}
Assume that $\Gamma_x$ is a compact subset of $\Poly_{+}$ for each $x\in X$.
Then, for every state $w\in W$, the zero set of the statewise escaping
probability agrees with the smallest filled-in Julia set:
\[
K_w(S_\tau)
=
\left\{
z\in\widehat{\mathbb C}
:\,
T_{\infty,\tilde\tau_w}(z)=0
\right\}.
\]
Hence the filled-in set is characterized exactly as the set of points whose
orbit does not tend to infinity with positive probability.
\end{mainresult}

We then pass from statewise behavior to stationary-averaged behavior.
Let $\Stat$ denote the set of all stationary distributions of the induced state
chain on $W$; see Definition~\ref{def:stationary-distribution-RSCC}.
For each $\pi\in\Stat$, we define
\[
T_{\infty,\tau,\pi}(z)
:=
\int_W T_{\infty,\tilde\tau_w}(z)\,\pi(dw),
\qquad z\in\widehat{\mathbb C},
\]
and, for each fixed $z\in\widehat{\mathbb C}$,
\[
\Phi_{\infty,\tau}(z)(\pi):=T_{\infty,\tau,\pi}(z),
\qquad \pi\in\Stat.
\]
The following result shows that stationary averaging preserves continuity, and
that for each fixed point $z$ the set of possible averaged values has a simple
convex structure determined by ergodic stationary distributions.

\begin{mainresult}[Theorem~\ref{thm:continuity-stationary-averaged} and Proposition~\ref{prop:escape-spectrum-ergodic}]
\label{mr:stationary-averaging}
Assume that $J_{\ker,w}(S_\tau)=\emptyset$ for all $w\in W$. Then
$T_{\infty,\tau,\pi}$ is continuous on $\widehat{\mathbb C}$ for every
$\pi\in\Stat$. Moreover, for each fixed $z\in\widehat{\mathbb C}$, the map
\[
\Phi_{\infty,\tau}(z)\colon \Stat\to[0,1]
\]
is affine, and its image
\[
\Phi_{\infty,\tau}(z)(\Stat)
=
\{\,T_{\infty,\tau,\pi}(z):\pi\in\Stat\,\}
\]
is a compact interval in $[0,1]$, whose endpoints are attained by ergodic
stationary distributions.
\end{mainresult}

In particular, stationary averaging does not produce an arbitrary collection of
values.
For each point $z$, all possible stationary-averaged escaping probabilities form
a compact interval, and the extremal values are already realized on ergodic
stationary distributions.

Our final main result gives a sufficient condition for the stationary-averaged
escaping probability to be non-degenerate.
It is formulated in terms of $\pi$-essential filled-in sets associated with
measurable subsets of the state space.

\begin{mainresult}[Proposition~\ref{prop:recovery-pi-essential}]
\label{mr:positive-escape}
Assume that $\Gamma_x$ is a compact subset of $\Poly_{+}$ for each $x\in X$.
Let $\pi\in\Stat$ satisfy $\supp\pi=W$, and assume that for each
$z\in\widehat{\mathbb C}$ the map
$W\ni w\longmapsto T_{\infty,\tilde\tau_w}(z)$
is continuous. If there exist measurable sets $B_1,B_2\subset W$ with positive
$\pi$-measure such that the corresponding $\pi$-essential filled-in sets are
both nonempty and disjoint, then $T_{\infty,\tau,\pi}(z)>0$ for every
$z\in\widehat{\mathbb C}$, and there exists at least one point
$z_0\in\widehat{\mathbb C}$ for which $T_{\infty,\tau,\pi}(z_0)<1$.
\end{mainresult}

This criterion shows that if two positive-measure parts of the state space
support essentially different non-escaping regions, then stationary averaging
forces a genuinely mixed escaping behavior: the averaged escaping probability is
strictly positive everywhere, but not identically equal to $1$.

\subsection{Organization of the Paper}
\label{subsec:organization}

The paper is organized as follows.

Section~\ref{sec:preliminaries} introduces the RSCC framework and recalls the basic objects needed throughout the paper.
In particular, we review the definition of a random system with complete connections, the associated state process, and the polynomial RSCC generated by a family of probability measures on $\Poly$.
We also introduce the statewise Julia set, Fatou set, and kernel Julia set, as well as the product-space transition operator and the form of Cooperation Principle~I used in this paper.

Section~\ref{sec:statewise-escaping} studies the escaping probability from the statewise point of view.
For each initial state $w\in W$, we define the escaping probability
$T_{\infty,\tilde\tau_w}$ and prove that it is locally constant on the Fatou set.
We then relate its zero set to the smallest filled-in Julia set under a compactness assumption on the family of admissible polynomials.
Finally, using the product-space transition operator together with Cooperation Principle~I, we prove continuity of the product-space escaping function
\(
\mathbb T_{\infty,\tau}(z,w)=T_{\infty,\tilde\tau_w}(z)
\)
under the emptiness of the kernel Julia sets.

Section~\ref{sec:stationary-averaged-escaping} turns to the stationary-averaged point of view.
We introduce stationary distributions of the induced state chain and define the stationary-averaged escaping probability
\(
T_{\infty,\tau,\pi}
\).
Under the same kernel-emptiness assumption, we prove continuity of
$T_{\infty,\tau,\pi}$.
We also study the stationary-averaged escaping functional
\(
\Phi_{\infty,\tau}(z)
\),
show that its range over $\Stat$ is a compact interval, and give a sufficient condition ensuring that the stationary-averaged escaping probability is everywhere positive but nontrivial.

Section~\ref{sec:examples} presents examples illustrating phenomena specific to the RSCC framework.
The first example shows that reinforcement-type state dependence may produce discontinuity of the statewise escaping probability.
The second example shows that this phenomenon disappears after a suitable truncation of the state dynamics.
The third example illustrates how stationary averaging can produce genuinely mixed escaping behavior and provides an application of the sufficient condition established in Section~\ref{sec:stationary-averaged-escaping}.

\medskip
To avoid ambiguity, we fix some notation used throughout the paper.
We write
\[
\mathbb N=\{1,2,3,\dots\},
\qquad
\mathbb N_0=\{0,1,2,3,\dots\},
\]
and use the standard symbols $\mathbb Z$, $\mathbb R$, and $\mathbb C$.
We denote by
\[
\widehat{\mathbb C}:=\mathbb C\cup\{\infty\}
\]
the Riemann sphere.
For a set $A$, we write $\mathcal P(A)$ for its power set, and for a topological space $A$, we write $\mathcal B(A)$ for its Borel $\sigma$-algebra.
Whenever possible, we follow the notation and terminology of
\cite{MR2747724,MR4002398} in order to facilitate comparison with the existing literature.

\section{Preliminaries}
\label{sec:preliminaries}

 \begin{definition}[{\cite[Definition~1.1.1]{MR1070097}}]\label{def:RSCC}
     A random system with complete connections (RSCC) is a quadruple $\{ (W, \mathcal{W}), (X,\mathcal{X}),u ,P \} $, where
     \begin{enumerate}[label=\textup{(\roman*)}]
         \item $(W, \mathcal{W})$ and $(X,\mathcal{X})$ are arbitrary measurable spaces;
         \item $u \colon W \times X \to W $ is a $(\mathcal{W} \otimes \mathcal{X}, \mathcal{W})$-measurable map;
         \item $P$ is a transition probability function from $(W,\mathcal{W})$ to $(X,\mathcal{X})$, that is, for each $w \in W$, the map $A \mapsto P(w,A)$ defines a probability 
measure on $(X,\mathcal{X})$, and for each $A \in \mathcal{X}$, 
the map $w \mapsto P(w,A)$ is $\mathcal{W}$-measurable.
     \end{enumerate}
 \end{definition}
 
In the above definition, we call $W$ the \emph{state space}, $X$ the \emph{index space}, and $u$ the \emph{update map}. 

We emphasize that, in general, no restrictions are imposed on the cardinality,
metric structure, or topological structure of the state space $W$ or the index
space $X$ in the definition of an RSCC.
Accordingly, no continuity assumptions are required for the update map $u$
or for the transition probability function $P$.
This observation highlights the fact that the class of random systems with complete
connections provides a very broad and flexible framework.
For concrete examples illustrating this generality, we refer the reader to
 \cite[Section~1.2]{MR1070097}.

\begin{notation}
We write $x^{(n)}=(x_{1},\ldots,x_{n})\in X^{n}$.
For $n\in\mathbb{N}$, define inductively the maps
\[
u^{(n)} \colon W \times X^{n} \to W
\]
by
\[
u^{(n)}(w,x^{(n)})
=
\begin{cases}
u(w,x_1), & \text{if } n=1,\\[6pt]
u\big(u^{(n-1)}(w,x^{(n-1)}),x_n\big),
& \text{if } n\ge2.
\end{cases}
\]
For simplicity, we write $u^{(n)}(w, x^{(n)})$ as $w x^{(n)}$ whenever no confusion arises.
For $r\in\mathbb{N}$, we define the $r$-step transition probability
$P_r$ from $(W,\mathcal W)$ to $(X^r,\mathcal X^r)$ by
\[
P_r(w,A)
=
\int_{X^r}
\mathbf{1}_A(x^{(r)})
\, P(w,dx_1)
P(w x_1,dx_2)
\cdots
P(w x^{(r-1)},dx_r),
\]
for $w\in W$ and $A\in\mathcal X^r$,
with the convention that $P_1(w,A)=P(w,A)$.

Moreover, for $n,r\in\mathbb{N}$, $w\in W$ and $A\in\mathcal X^r$, we define
\[
P_r^{\,n}(w,A)
:=
P_{n+r-1}\big(w, X^{n-1}\times A\big).
\]
\end{notation}

The following existence theorem provides a cornerstone of the framework
developed in this paper.

\begin{theorem}[{\cite[Theorem~1.1.2]{MR1070097}}]\label{thm:RSCC-existence}
Let $\{(W,\mathcal{W}), (X,\mathcal{X}), u, P\}$ be an RSCC, 
and fix an arbitrary state $w_{0}\in W$. 
Then there exists a unique probability measure $\mathbf{P}_{w_{0}}$ on 
$(X^{\mathbb{N}}, \mathcal{X}^{\mathbb{N}})$ 
and a sequence of $X$-valued random variables $(\xi_{n})_{n\in\mathbb{N}}$ defined on 
$(X^{\mathbb{N}}, \mathcal{X}^{\mathbb{N}}, \mathbf{P}_{w_{0}})$ such that, 
for all $m,n,r\in\mathbb{N}$ and $A\in\mathcal{X}^{r}$, the following hold:
\begin{enumerate}[label=\textup{(\roman*)}]
    \item $\mathbf{P}_{w_{0}}\big([\xi_{n},\ldots,\xi_{n+r-1}]\in A\big)=P_{r}^{n}(w_{0},A)$;
    \item $\mathbf{P}_{w_{0}}\big([\xi_{n+m},\ldots,\xi_{n+m+r-1}]\in A \mid \xi^{(n)}\big)
    =P_{r}^{m}(w_{0}\xi^{(n)},A)$, $\mathbf{P}_{w_{0}}$-a.s.;
    \item $\mathbf{P}_{w_{0}}\big([\xi_{n+m},\ldots,\xi_{n+m+r-1}]\in A \mid \xi^{(n)},\zeta^{(n)}\big)
    =P_{r}^{m}(\zeta_{n},A)$, $\mathbf{P}_{w_{0}}$-a.s.;
\end{enumerate}
where $\xi^{(n)}=(\xi_{1},\ldots,\xi_{n})$, $\zeta_{n}=w_{0}\xi^{(n)}$, and $\zeta^{(n)}=(\zeta_{1},\ldots,\zeta_{n})$.

Moreover, the sequence $(\zeta_{n})_{n\in\mathbb{N}}$ with $\zeta_{0}=w_{0}$ 
forms a $W$-valued homogeneous Markov chain whose transition operator
\begin{equation*}
     \U f(w) = \int_{X} f(wx)\,P(w,dx) , 
    \qquad f\in B_b(W,\mathcal W),
\end{equation*}
acts on the Banach space $B_b(W,\mathcal W)$ 
of all bounded $\mathcal{W}$-measurable complex-valued functions on $W$.
\end{theorem}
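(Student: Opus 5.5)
The plan is to construct $\mathbf P_{w_0}$ by the Ionescu-Tulcea theorem and then read off (i)--(iii) and the Markov property from the explicit form of its finite-dimensional distributions.

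\textbf{Construction.} For $n\ge 1$ define kernels $K_n$ from $(X^{n-1},\mathcal X^{n-1})$ to $(X,\mathcal X)$ by
\[
K_n(x^{(n-1)},A):=P\big(w_0x^{(n-1)},A\big),
\]
with $K_1(\cdot,A)=P(w_0,A)$. These are genuine transition probabilities. For fixed $x^{(n-1)}$, $A\mapsto K_n(x^{(n-1)},A)$ is a probability measure. For fixed $A$, the map $x^{(n-1)}\mapsto K_n(x^{(n-1)},A)$ is the composition of the measurable map $x^{(n-1)}\mapsto u^{(n-1)}(w_0,x^{(n-1)})$ with the $\mathcal W$-measurable map $w\mapsto P(w,A)$. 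The first map is measurable by induction from the $(\mathcal W\otimes\mathcal X)$-measurability of $u$.

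The Ionescu-Tulcea theorem, which needs no topological assumptions, then gives a unique probability measure $\mathbf P_{w_0}$ on $(X^{\mathbb N},\mathcal X^{\mathbb N})$. Its projection onto $X^{r}$ is $P_r(w_0,\cdot)$ by the very definition of $P_r$. We take $\xi_n$ to be the coordinate projections.

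Uniqueness follows because the cylinder sets form a $\pi$-system generating $\mathcal X^{\mathbb N}$. Assertion (i) is immediate: the event $[\xi_n,\dots,\xi_{n+r-1}]\in A$ is the cylinder $X^{n-1}\times A$ in $X^{n+r-1}$, whose measure is $P^n_r(w_0,A)$ by definition.

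\textbf{Conditional statements.} For (ii) we verify the defining identity of conditional expectation on sets $\{\xi^{(n)}\in B\}$ with $B\in\mathcal X^n$. By Fubini and the iterated-integral form of $P_{n+m+r-1}(w_0,\cdot)$,
\[
\mathbf P_{w_0}\big(\xi^{(n)}\in B,\ [\xi_{n+m},\dots]\in A\big)=\int_B P_{m+r-1}\big(w_0x^{(n)},X^{m-1}\times A\big)\,P_n(w_0,dx^{(n)}).
\]
This works because the inner integrals only involve states of the form $(w_0x^{(n)})x_{n+1}\cdots$. The integrand on the right is $P^m_r(w_0x^{(n)},A)$, which is measurable in $x^{(n)}$; measurability of $w\mapsto P_r^m(w,A)$ itself follows by a monotone class argument.

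For (iii), note that $\zeta^{(n)}$ is a measurable function of $\xi^{(n)}$. Hence $\sigma(\xi^{(n)},\zeta^{(n)})=\sigma(\xi^{(n)})$, and (iii) reduces to (ii) with $\zeta_n=w_0\xi^{(n)}$.

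\textbf{Markov property.} Since $\zeta_{n+1}=u(\zeta_n,\xi_{n+1})$, (iii) with $m=r=1$ gives, for $f\in B_b(W,\mathcal W)$,
\[
\mathbf E\big[f(\zeta_{n+1})\mid\xi^{(n)},\zeta^{(n)}\big]=\int_X f(\zeta_nx)\,P(\zeta_n,dx)=\U f(\zeta_n).
\]
Conditioning down to $\sigma(\zeta_0,\dots,\zeta_n)$ gives homogeneous Markov behaviour with transition operator $\U$.

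It remains to check that $\U$ maps $B_b$ into itself. Boundedness is clear. Measurability holds first for $f=\mathbf 1_C$, where $(w,x)\mapsto \mathbf 1_C(u(w,x))$ is jointly measurable and integrating a jointly measurable function against a kernel yields a measurable function; it then extends by linearity and monotone class.

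The main obstacle is the measurability bookkeeping, namely measurability of $w\mapsto P_r(w,A)$ for product sets $A$ and its extension via Dynkin's $\pi$--$\lambda$ theorem. The probabilistic content is entirely Ionescu-Tulcea.
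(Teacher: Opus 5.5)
Your proposal is correct. The paper gives no proof of its own and simply quotes \cite[Theorem~1.1.2]{MR1070097}; your argument is the standard one behind that result: apply Ionescu Tulcea to the kernels $P(w_0x^{(n-1)},\cdot)$, take the coordinate process, and obtain (i)--(iii) by splitting the iterated integral via $w_0x^{(n+k)}=(w_0x^{(n)})(x_{n+1},\dots,x_{n+k})$.

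One step is passed over quickly: going from the conditional law of $\xi_{n+1}$ in (iii) to $\mathbf E[f(u(\zeta_n,\xi_{n+1}))\mid\xi^{(n)}]=\U f(\zeta_n)$. This needs the usual freezing argument for the regular conditional distribution $P(\zeta_n,\cdot)$, or the same Fubini computation as in (ii) with $f\circ u$ in place of an indicator, but it is routine.
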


\begin{notation}
\label{not:Q-Markov-kernel}
Let $\{(W,\mathcal W),(X,\mathcal X),u,P\}$ be an RSCC.
We define the transition probability function (Markov kernel) $Q$ on
$(W,\mathcal W)$ induced by $(u,P)$ by
\[
Q(w,A)
:=
\int_X \mathbf 1_A\bigl(u(w,x)\bigr)\,P(w,dx),
\qquad w\in W,\ A\in\mathcal W.
\]

The associated Markov operator $\U$ on $B_b(W,\mathcal W)$ is given by
\[
(\U f)(w)
:=
\int_W f(w')\,Q(w,dw')
=
\int_X f\bigl(u(w,x)\bigr)\,P(w,dx),
\qquad f\in B_b(W,\mathcal W),
\]
and its iterates satisfy
\[
(\U^{n}f)(w)=\int_W f(w')\,Q^{n}(w,dw'),
\qquad n\in\mathbb N.
\]

Dually, let $ba(W,\mathcal W)$ denote the space of all finitely additive,
finite complex-valued set functions on $\mathcal W$ equipped with the total
variation norm. We define $\V:ba(W,\mathcal W)\to ba(W,\mathcal W)$ by
\[
(\V\mu)(A)
:=
\int_W Q(w,A)\,\mu(dw),
\qquad A\in\mathcal W.
\]
Then $\V$ is a bounded linear operator with $\|\V\|=1$, and
\[
(\V^{n}\mu)(A)=\int_W Q^{n}(w,A)\,\mu(dw),
\qquad \mu\in ba(W,\mathcal W),\ n\in\mathbb N.
\]
\end{notation}

\begin{definition}
[{\cite[Definition~3.3.1]{MR1070097}}]
\label{def:continuous-Markov-chain}
A Markov chain is said to be \emph{continuous} if its state space is a compact
metric space and its transition probability function $Q$ is continuous.
\end{definition}

For each initial state $w\in W$, Theorem~\ref{thm:RSCC-existence} yields an
associated sequence of $X$-valued random variables
$(\xi_n)_{n\in\mathbb N}$ together with a $W$-valued homogeneous Markov chain
$(\zeta_n)_{n\in\mathbb N}$.
Using these sequences, we study the induced random complex dynamics.
Our main interest is in the case where the Markov chain
$(\zeta_n)_{n\in\mathbb N}$ is continuous in the sense of
Definition~\ref{def:continuous-Markov-chain}.

In order to make the standing assumptions precise, we now introduce several
conditions that will be imposed throughout the remainder of this paper.

\begin{definition}\label{def:FDIS}
An RSCC
$\{(W,\mathcal W),(X,\mathcal X),u,P\}$
is said to satisfy the \emph{finite discrete index space} condition
(or the \emph{FDIS condition}) if the following hold:
\begin{enumerate}[label=\textup{(\roman*)}]
\item
The index set $X$ is finite and is endowed with the discrete topology.

\item
The $\sigma$--algebra $\mathcal X$ is given by the power set of $X$, namely,
\[
\mathcal X := \mathcal P(X).
\]
Equivalently, $(X,\mathcal X)$ is a finite discrete measurable space equipped
with its Borel $\sigma$--algebra.
\end{enumerate}
\end{definition}

\begin{definition}\label{def:CMSS}
An RSCC
$\{(W,\mathcal W),(X,\mathcal X),u,P\}$
is said to satisfy the \emph{compact metric state space} condition
(or the \emph{CMSS condition}) if the following hold:
\begin{enumerate}[label=\textup{(\roman*)}]
\item
The state space $W$ is a compact metric space.

\item
The $\sigma$--algebra $\mathcal W$ is given by the Borel $\sigma$--algebra of $W$,
namely,
\[
\mathcal W := \mathcal B(W).
\]
\end{enumerate}
\end{definition}

\begin{assumption}\label{ass:standing-cooperation}
Throughout this paper, we impose the following standing assumptions.
\begin{enumerate}[label=\textup{(\roman*)}, leftmargin=2.8em]

\item\label{ass:finite-index-conti-markov}
The RSCC
$\{(W,\mathcal W),(X,\mathcal X),u,P\}$
satisfies the finite discrete index space \emph{(FDIS)} condition and the
compact metric state space \emph{(CMSS)} condition in the sense of
Definitions~\ref{def:FDIS} and~\ref{def:CMSS}.

\item\label{ass:continuity-transition-prob}
For each $x\in X$, the map
\[
W \ni v \longmapsto P(v,\{x\}) \in [0,1]
\]
is continuous.

\item\label{ass:conti-markov}
The transition probability function $Q$ associated with the RSCC is continuous.
Equivalently, the $W$--valued homogeneous Markov chain
$(\zeta_n)_{n\in\mathbb N}$ obtained from the RSCC via
Theorem~\ref{thm:RSCC-existence} is continuous in the sense of
Definition~\ref{def:continuous-Markov-chain}.
\end{enumerate}
Throughout this paper, $d_W$ denotes the metric on $W$.
\end{assumption}

\begin{notation}
Let $\widehat{\mathbb{C}}$ denote the Riemann sphere. 
We define
\[
\Poly
:= \{\, h:\widehat{\mathbb{C}}\to\widehat{\mathbb{C}}
\mid h \text{ is a non-constant polynomial} \,\},
\]
and endow $\Poly$ with the metric
\[
d_{\Poly}(f,g)
:= \sup_{z\in\widehat{\mathbb{C}}}
d_{\widehat{\mathbb{C}}}(f(z),g(z)),
\]
where $d_{\widehat{\mathbb{C}}}$ denotes the spherical metric on
$\widehat{\mathbb{C}}$.  
We further set
\[
\Poly_{+}
:= \{\, h\in\Poly : \deg(h)\ge 2 \,\}.
\]
Both $\Poly$ and $\Poly_{+}$ are equipped with the metric topology induced by
$d_{\Poly}$.
\end{notation}

Let $\mathfrak{M}_{1}(\Poly)$ denote the space of all Borel probability measures
on $\Poly$.
For each $x\in X$, we fix a Borel probability measure
\[
\tau_{x} \in \mathfrak{M}_{1}(\Poly),
\]
and denote by
\[
\Gamma_{x} := \supp \tau_{x} \subset \Poly
\]
the support of $\tau_{x}$.
In what follows, we study the system
\[
S_{\tau}
:=
\{(W,\mathcal{W}), (X,\mathcal{X}), u, P, \{\Gamma_{x}\}_{x\in X}\},
\]
which consists of an RSCC together with a family of subsets
$\{\Gamma_{x}\}_{x\in X}$ of\/ $\Poly$.
We call $S_{\tau}$ a \emph{random system of polynomial maps on
$\widehat{\mathbb{C}}$ with complete connections}, or simply a
\emph{polynomial RSCC}.

\begin{notation}
Let
\(
S_{\tau}
=
\{(W,\mathcal{W}), (X,\mathcal{X}), u, P, \{\Gamma_x\}_{x\in X}\}
\)
be a polynomial RSCC and fix a
state $w\in W$.

\begin{itemize}
\item
For $n\in\mathbb{N}$, we write $x^{(n)}=(x_1,\ldots,x_n)\in X^n$ and define the set
of \emph{admissible index words of length $n$ from $w$} by
\[
X_{w,n}
:=
\bigl\{
  x^{(n)} \in X^{n}
  \ : \
  \mathbf{P}_{w}\big([x^{(n)}]\big) > 0
\bigr\}.
\]
We also set
\[
X_{w,*}
:=
\bigcup_{n\in\mathbb{N}} X_{w,n},
\]
and define the set of \emph{reachable states from $w$} by
\[
\operatorname{Reach}(w)
:=
\bigl\{
  w x^{(n)} \in W
  \ :\
  x^{(n)} \in X_{w,*}
\bigr\}.
\]

\item
For a finite sequence $\gamma^{(n)} = (\gamma_{1},\ldots,\gamma_{n}) 
\in \Poly^{n}$ and integers $1 \le M \le N \le n$, we define
\[
\gamma_{N,M} := \gamma_{N} \circ \cdots \circ \gamma_{M}.
\]
For an infinite sequence $\gamma = (\gamma_{k})_{k\in\mathbb{N}} 
\in \Poly^{\mathbb{N}}$ and integers $1 \le M \le N$, we define
\[
\gamma_{N,M} := \gamma_{N} \circ \cdots \circ \gamma_{M}.
\]

\item
For each $w\in W$, we define the set of \emph{admissible finite polynomial
compositions from $w$} by
\[
H_w(S_{\tau})
:=
\bigl\{
  \gamma_n\circ\cdots\circ\gamma_1 \in \Poly
  \, :\,
  n\in\mathbb{N},\;
  x^{(n)}=(x_1,\ldots,x_n)\in X_{w,n},\;
  \gamma_j\in\Gamma_{x_j}\ (j=1,\ldots,n)
\bigr\}.
\]
For $v\in\operatorname{Reach}(w)$, we further set
\[
H_w^{v}(S_{\tau})
:=
\bigl\{
  \gamma_n\circ\cdots\circ\gamma_1 \in H_w(S_{\tau})
  \,:\,
  w x^{(n)} = v
\bigr\},
\]
and define $H_w^{v}(S_{\tau}) := \emptyset$ for
$v\notin\operatorname{Reach}(w)$.

\item
Finally, we define the set of all \emph{admissible infinite polynomial sequences
from $w$} by
\[
\Xi_w(S_{\tau})
:=
\Bigl\{
  (\gamma_n,x_n)_{n\in\mathbb{N}} \in (\Poly\times X)^{\mathbb{N}}
  \ :\
  (x_n)_{n\in\mathbb{N}}\in\supp(\mathbf{P}_w),\ 
  \gamma_n\in\Gamma_{x_n}
  \text{ for all } n\in\mathbb{N}
\Bigr\}.
\]
\end{itemize}
\end{notation}

\begin{definition}
\label{def:Pwtilde}
Let
\(
S_{\tau}
\)
be a polynomial RSCC and fix
$w\in W$.
For $n\in\mathbb{N}$, let $x^{(n)}=(x_1,\ldots,x_n)\in X_{w,n}$, and let
$A_k\in\mathcal{B}(\Gamma_{x_k})$ for $1\le k\le n$, where
$\mathcal{B}(\Gamma_{x_k})$ denotes the Borel $\sigma$--algebra on the subset
$\Gamma_{x_k}\subset\Poly$.
We define the corresponding cylinder set by
\[
C(A_1,\ldots,A_n; x^{(n)})
:=
\Bigl\{
(\gamma_k,x_k)_{k\in\mathbb N}\in\Xi_w(S_{\tau}) :
x_k = x_k^{(n)},\ 
\gamma_k\in A_k \ (1\le k\le n)
\Bigr\}.
\]

Let $\mathcal{C}_w$ denote the collection of all such cylinder sets.
We define a set function $\tilde{\tau}_{w}$ on $\mathcal{C}_w$ by
\[
\tilde{\tau}_{w}
\bigl(C(A_1,\ldots,A_n; x^{(n)})\bigr)
:=
\mathbf{P}_{w}\bigl([x_1,\ldots,x_n]\bigr)
\prod_{k=1}^{n}\tau_{x_k}(A_k),
\qquad n\in\mathbb N.
\]

This set function is well defined and finitely additive on $\mathcal{C}_w$.
Let $\mathcal{G}_w:=\sigma(\mathcal{C}_w)$ be the $\sigma$--algebra generated by
$\mathcal{C}_w$.
By Carath\'eodory's extension theorem, $\tilde{\tau}_{w}$ extends uniquely to a
probability measure on $(\Xi_w(S_{\tau}),\mathcal{G}_w)$, which we again denote by
$\tilde{\tau}_{w}$.
\end{definition}

We now recall the definitions of the Julia set, the Fatou set, and the kernel
Julia set in the setting of random systems with complete connections.

\begin{definition}
\label{def:Julia-Fatou-kernel-RSCC}
For each state $w\in W$, we define the \emph{Julia set at $w$} by
\[
J_w(S_\tau)
:=
\left\{
z\in \widehat{\mathbb C}
\ : \
H_w(S_\tau) \text{ is not equicontinuous on any neighborhood of } z
\right\}.
\]
The corresponding \emph{Fatou set at $w$} is defined by
\[
F_w(S_\tau)
:=
\widehat{\mathbb C}\setminus J_w(S_\tau).
\]
Moreover, the \emph{kernel Julia set at $w$} is defined by
\[
J_{\ker,w}(S_\tau)
:=
\bigcap_{v\in \operatorname{Reach}(w)}
\ \bigcap_{\gamma\in H_w^v(S_\tau)}
\gamma^{-1}\!\bigl(J_v(S_\tau)\bigr).
\]
\end{definition}

\begin{notation}
\label{not:product-space-basic}
Let $(W,d_W)$ be the compact metric state space from
Assumption~\ref{ass:standing-cooperation}.
We set
\[
\mathbb{Y}:=\widehat{\mathbb C}\times W,
\]
and endow $\mathbb{Y}$ with the product topology (equivalently, any compatible
product metric, e.g.\ $d_{\mathbb Y}((z,w),(z',w')):=d_{\widehat{\mathbb C}}(z,z')+d_W(w,w')$).
Then $\mathbb{Y}$ is a compact metrizable space.

We denote by $C(\mathbb{Y})$ the Banach space of all complex-valued continuous
functions $\phi:\mathbb{Y}\to\mathbb{C}$ equipped with the supremum norm
\[
\|\phi\|_{\infty}
:=\sup_{(z,w)\in\mathbb{Y}}|\phi(z,w)|.
\]
Since $\mathbb{Y}$ is compact metrizable, the normed space
$\bigl(C(\mathbb{Y}),\|\cdot\|_{\infty}\bigr)$ is separable.

We write $\mathfrak{M}_{1}(\mathbb{Y})$ for the set of all Borel probability measures
on $\mathbb{Y}$.
We endow $\mathfrak{M}_{1}(\mathbb{Y})$ with the weak-$*$ topology,
that is, the coarsest topology for which every map
\[
\mathfrak{M}_{1}(\mathbb{Y})\ni \mu
\longmapsto
\int_{\mathbb{Y}} \phi \, d\mu,
\qquad \phi\in C(\mathbb{Y}),
\]
is continuous.
Then the weak-$*$ topology on $\mathfrak{M}_{1}(\mathbb{Y})$ is compact and
metrizable. Moreover, it is induced by the metric
\[
d_{\mathfrak{M}_{1}(\mathbb{Y})}(\mu_1,\mu_2)
:=
\sum_{j=1}^{\infty}
\frac{1}{2^j}\,
\frac{
\bigl|
\int_{\mathbb{Y}} \phi_j\, d\mu_1
-
\int_{\mathbb{Y}} \phi_j\, d\mu_2
\bigr|
}{
1 +
\bigl|
\int_{\mathbb{Y}} \phi_j\, d\mu_1
-
\int_{\mathbb{Y}} \phi_j\, d\mu_2
\bigr|
},
\qquad \mu_1,\mu_2\in \mathfrak{M}_{1}(\mathbb{Y}),
\]
where $(\phi_j)_{j\in\mathbb{N}}$ is any sequence that is dense in $C(\mathbb{Y})$.
\end{notation}

\begin{definition}[Transition operator]
\label{def:transition-operator}
We define the \emph{transition operator}
\(
M_\tau : B_b(\mathbb Y)\longrightarrow B_b(\mathbb Y)
\)
by
\[
M_{\tau}\phi(z,w)
:=
\sum_{x\in X_{w,1}}
P(w,\{x\})
\int_{\Gamma_x}
\phi\bigl(\gamma(z),\, wx\bigr)\, d\tau_x(\gamma),
\]
for $\phi\in B_b(\mathbb Y)$ and $(z,w)\in\mathbb Y$.
\end{definition}

\begin{lemma}
\label{lem:Mtau-is-Markov}
The transition operator $M_\tau$ maps $C(\mathbb Y)$ into itself.
Moreover, the restriction
\[
M_\tau\colon C(\mathbb Y)\longrightarrow C(\mathbb Y)
\]
is a Markov operator.
That is, the following properties hold:
\begin{enumerate}[label=\textup{(\roman*)}]
\item
$M_\tau \mathbf{1}_{\mathbb Y}=\mathbf{1}_{\mathbb Y}$;
\item
$M_\tau\phi\ge 0$ for every $\phi\in C(\mathbb Y)$ with $\phi\ge 0$.
\end{enumerate}
In particular, $\|M_\tau\|=1$ and
\(
M_\tau^{*}\bigl(\mathfrak{M}_{1}(\mathbb{Y})\bigr)
\subset
\mathfrak{M}_{1}(\mathbb{Y}).
\)
\end{lemma}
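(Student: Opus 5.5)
The plan is to first rewrite $M_\tau$ in a form where the index set of the sum does not depend on $w$, and then treat continuity term by term. Since $X_{w,1}=\{x\in X:\mathbf P_w([x])>0\}=\{x\in X: P(w,\{x\})>0\}$, the omitted indices carry weight $P(w,\{x\})=0$. Hence for every $\phi\in B_b(\mathbb Y)$,
\[
M_\tau\phi(z,w)=\sum_{x\in X}P(w,\{x\})\,\Psi_x\phi(z,w),\qquad
\Psi_x\phi(z,w):=\int_{\Gamma_x}\phi\bigl(\gamma(z),u(w,x)\bigr)\,d\tau_x(\gamma).
\]
By the FDIS condition this is a finite sum. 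By Assumption~\ref{ass:standing-cooperation}\ref{ass:continuity-transition-prob}, each coefficient $w\mapsto P(w,\{x\})$ is continuous. So it suffices to show that each $\Psi_x\phi$ is continuous on $\mathbb Y$ whenever $\phi\in C(\mathbb Y)$, at least at points where $P(w,\{x\})>0$; at points where $P(w,\{x\})=0$ the product is still continuous, because $\Psi_x\phi$ is bounded by $\|\phi\|_\infty$.

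Next I check measurability and continuity of $\Psi_x\phi$. The evaluation map $\Poly\times\widehat{\mathbb C}\ni(\gamma,z)\mapsto\gamma(z)$ is jointly continuous. Indeed,
\[
d_{\widehat{\mathbb C}}(\gamma_n(z_n),\gamma(z))\le d_{\Poly}(\gamma_n,\gamma)+d_{\widehat{\mathbb C}}(\gamma(z_n),\gamma(z)),
\]
and each polynomial is continuous on $\widehat{\mathbb C}$. Consequently the integrand $\gamma\mapsto\phi(\gamma(z),u(w,x))$ is continuous, hence Borel, and is bounded by $\|\phi\|_\infty$. Now take $(z_n,w_n)\to(z,w)$. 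If the first coordinate of the integrand converges and $u(w_n,x)\to u(w,x)$, then the integrands converge pointwise in $\gamma$, and dominated convergence gives $\Psi_x\phi(z_n,w_n)\to\Psi_x\phi(z,w)$.

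The main obstacle is exactly this continuity of $w\mapsto u(w,x)$ in the state variable. Assumption~\ref{ass:standing-cooperation}\ref{ass:conti-markov} only asserts continuity of the kernel $Q$, i.e.\ of $w\mapsto\sum_x P(w,\{x\})\delta_{u(w,x)}$, and here the test function $\Psi$ couples $z$ with the updated state. My first attempt would be to extract continuity of $u(\cdot,x)$ on the open set $\{w:P(w,\{x\})>0\}$ from continuity of $Q$ together with the continuity of the weights $P(\cdot,\{x\})$. If that extraction fails in general, I will instead read the standing continuity hypothesis as continuity of each $u(\cdot,x)$, which is the setting in which the Markov chain is continuous via Definition~\ref{def:continuous-Markov-chain}, and use it directly. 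In either case the $z$-dependence causes no difficulty, since it is absorbed pointwise in $\gamma$ by dominated convergence.

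Finally, the Markov properties are immediate. For (i), $M_\tau\mathbf 1_{\mathbb Y}(z,w)=\sum_{x}P(w,\{x\})\tau_x(\Gamma_x)=1$, since $\tau_x(\Gamma_x)=1$. For (ii), positivity follows because all weights and measures are nonnegative. Then $|M_\tau\phi|\le M_\tau|\phi|\le\|\phi\|_\infty$ gives $\|M_\tau\|\le1$, and equality holds at $\mathbf 1_{\mathbb Y}$. For the dual statement, a positive functional $\mu\in\mathfrak M_1(\mathbb Y)$ is sent to the positive functional $\phi\mapsto\int M_\tau\phi\,d\mu$, which takes the value $1$ at $\mathbf 1_{\mathbb Y}$. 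By the Riesz representation theorem on the compact metrizable space $\mathbb Y$, this functional is again a Borel probability measure.
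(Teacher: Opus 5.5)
Your routine steps are correct. $X_{w,1}=\{x:P(w,\{x\})>0\}$, so the sum may be extended to all of $X$. The terms with $P(w_0,\{x\})=0$ are handled correctly by the bound $|\Psi_x\phi|\le\|\phi\|_\infty$. Joint continuity of evaluation plus dominated convergence gives continuity of $\Psi_x\phi$ once $u(\cdot,x)$ is continuous. Items (i), (ii), $\|M_\tau\|=1$ and the Riesz argument for $M_\tau^*$ are also fine. The paper gives no proof of this lemma (it is imported from the author's earlier work), so there is nothing to compare line by line.

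There is a genuine gap at the step you left open, and it cannot be closed from the stated hypotheses. Your ``first attempt'' fails, and your fallback is an extra hypothesis, not a reading of Assumption~\ref{ass:standing-cooperation}.

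Conditions (ii)--(iii) only involve the weights $P(\cdot,\{x\})$ and the kernel $Q(w,\cdot)=\sum_x P(w,\{x\})\delta_{u(w,x)}$. Both are insensitive to a $w$-dependent relabelling of the indices, whereas $M_\tau$ is not. Here is a concrete counterexample:
\begin{itemize}
\item $W=[0,1]$, $X=\{0,1\}$, and $P(w,\{0\})=P(w,\{1\})=\tfrac12$.
\item $u(w,0)=\mathbf 1_{[1/2,1]}(w)$ and $u(w,1)=1-u(w,0)$.
\item $\tau_0=\delta_f$ and $\tau_1=\delta_g$, with $f(z)=z^2$ and $g(z)=z^2/2$.
\end{itemize}
Then $Q(w,\cdot)=\tfrac12(\delta_0+\delta_1)$ for every $w$, so all standing assumptions hold, with $Q$ even constant. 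Now take $\phi(z,w)=\psi(z)\,w$ with $\psi\in C(\widehat{\mathbb C})$ and $\psi(2)\neq\psi(4)$. One finds $M_\tau\phi(2,w)=\tfrac12\psi(2)$ for $w<\tfrac12$ and $M_\tau\phi(2,w)=\tfrac12\psi(4)$ for $w\ge\tfrac12$. Hence $M_\tau\phi\notin C(\mathbb Y)$.

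So continuity of $u(\cdot,x)$ on $\{P(\cdot,\{x\})>0\}$ is not implied by continuity of $Q$ together with continuity of the weights. The implication only runs the other way, so your remark that continuity of $u(\cdot,x)$ ``is the setting in which the Markov chain is continuous'' is inaccurate. The lemma as stated needs this continuity as an additional assumption. All examples in the paper have continuous $u$, so this is evidently the intended setting. You should commit to that branch and state the hypothesis explicitly; with it, your argument is a complete proof.
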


\begin{definition}\label{def:Fatou-meas}
Let $M_\tau:C(\mathbb{Y})\to C(\mathbb{Y})$ be the transition operator from
Definition~\ref{def:transition-operator}, and let
\[
M_\tau^{*}:\mathfrak{M}_{1}(\mathbb{Y})\to\mathfrak{M}_{1}(\mathbb{Y})
\]
be its adjoint.

\begin{enumerate}[label=\textup{(\roman*)}]
\item
We define $F_{\mathrm{meas}}(M_\tau^{*})$ as the set of all
$\mu \in \mathfrak{M}_{1}(\mathbb{Y})$ for which there exists
a neighborhood $U\subset \mathfrak{M}_{1}(\mathbb{Y})$ such that
the family $\{(M_\tau^{*})^{n}\}_{n\in\mathbb{N}}$ is equicontinuous on $U$.

\item
We define $F_{\mathrm{meas}}^{0}(M_\tau^{*})$ as the set of all
$\mu \in \mathfrak{M}_{1}(\mathbb{Y})$ at which the family
$\{(M_\tau^{*})^{n}\}_{n\in\mathbb{N}}$ is equicontinuous at $\mu$.
\end{enumerate}
\end{definition}

\begin{definition}\label{def:Fatou-pt}
Let $M_\tau$ and $M_\tau^{*}$ be as in Definition~\ref{def:Fatou-meas}.
We denote by
\[
\iota:\mathbb{Y}\to\mathfrak{M}_{1}(\mathbb{Y}),
\qquad
\iota(z,w):=\delta_{(z,w)},
\]
the natural embedding that sends each point to the corresponding Dirac measure.

\begin{enumerate}[label=\textup{(\roman*)}]
\item
We define $F_{\mathrm{pt}}(M_\tau^{*})$ as the set of all $(z,w)\in\mathbb{Y}$
for which there exists a neighborhood $U\subset\mathbb{Y}$ such that the family
\[
\Bigl\{\, (M_\tau^{*})^{n} \circ \iota :
\mathbb{Y}\to \mathfrak{M}_{1}(\mathbb{Y}) \,\Bigr\}_{n\in\mathbb{N}}
\]
is equicontinuous on $U$.

\item
We define $F_{\mathrm{pt}}^{0}(M_\tau^{*})$ as the set of all $(z,w)\in\mathbb{Y}$
at which the family $\{(M_\tau^{*})^{n} \circ \iota\}_{n\in\mathbb{N}}$
is equicontinuous at $(z,w)$.
\end{enumerate}
\end{definition}

Next, we recall a lemma from \cite{endo2026juliafatoutheoryrandomsystems}, which generalizes corresponding results in \cite{MR2747724,MR4002398}.

\begin{lemma}[Pointwise equicontinuity via test functions]
\label{lem:pt-equicontinuity-Mtau}
Let $M_\tau:C(\mathbb{Y})\to C(\mathbb{Y})$ be a Markov operator and
$M_\tau^{*}:\mathfrak{M}_{1}(\mathbb{Y})\to\mathfrak{M}_{1}(\mathbb{Y})$ its adjoint.
For $(z,w)\in\mathbb{Y}$ the following are equivalent:
\begin{enumerate}[label=\textup{(\roman*)}]
\item
$(z,w)\in F_{\mathrm{pt}}^{0}(M_\tau^{*})$;

\item
For every $\phi\in C(\mathbb{Y})$, the family $\{M_\tau^{n}\phi\}_{n\ge0}$
is equicontinuous at $(z,w)$.
\end{enumerate}
\end{lemma}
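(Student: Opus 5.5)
The plan is to prove the two implications separately, working directly with the weak-$*$ metric $d_{\mathfrak M_1(\mathbb Y)}$ from Notation~\ref{not:product-space-basic}. The basic identity is duality:
\[
\int_{\mathbb Y}\phi\,d\bigl((M_\tau^{*})^n\delta_{y}\bigr)=M_\tau^n\phi(y),
\qquad \phi\in C(\mathbb Y),\ y\in\mathbb Y .
\]
It holds because $M_\tau$ maps $C(\mathbb Y)$ into itself and $M_\tau^*$ is its adjoint (Lemma~\ref{lem:Mtau-is-Markov}). Thus
\[
d_{\mathfrak M_1(\mathbb Y)}\bigl((M_\tau^{*})^n\iota(y),(M_\tau^{*})^n\iota(y')\bigr)
=\sum_{j}2^{-j}\,
\frac{|M_\tau^n\phi_j(y)-M_\tau^n\phi_j(y')|}{1+|M_\tau^n\phi_j(y)-M_\tau^n\phi_j(y')|}.
\]
Both directions will be read off from this formula.

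For (i)$\Rightarrow$(ii), fix $\phi\in C(\mathbb Y)$ and $\varepsilon>0$. By density, choose $j$ with $\|\phi-\phi_j\|_\infty<\varepsilon/3$. Since $\|M_\tau^n\|=1$, we have $\|M_\tau^n\phi-M_\tau^n\phi_j\|_\infty<\varepsilon/3$ for every $n$, so it suffices to control the $j$-th term uniformly in $n$. Equicontinuity of $\{(M_\tau^*)^n\circ\iota\}$ at $(z,w)$ gives a neighborhood $U$ on which the metric above is less than $\eta$ for all $n$. Then the $j$-th summand is below $\eta$, so
\[
t:=|M_\tau^n\phi_j(y)-M_\tau^n\phi_j(z,w)|\quad\text{satisfies}\quad \frac{t}{1+t}<2^j\eta .
\]
Taking $\eta$ small (say $2^j\eta<1/2$) forces $t<2\cdot 2^j\eta<\varepsilon/3$. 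The triangle inequality then gives $|M_\tau^n\phi(y)-M_\tau^n\phi(z,w)|<\varepsilon$ for all $n$ and all $y\in U$.

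For (ii)$\Rightarrow$(i), fix $\varepsilon>0$ and choose $N$ with $\sum_{j>N}2^{-j}<\varepsilon/2$; each tail summand is at most $2^{-j}$ because $t/(1+t)<1$. Applying (ii) to the finitely many functions $\phi_1,\dots,\phi_N$ gives a common neighborhood $U$ of $(z,w)$ on which $|M_\tau^n\phi_j(y)-M_\tau^n\phi_j(z,w)|<\varepsilon/2$ for all $n\ge0$ and $j\le N$. Since $t/(1+t)\le t$, the first $N$ summands contribute at most $\sum_{j\le N} 2^{-j}\varepsilon/2<\varepsilon/2$. Hence the distance is less than $\varepsilon$ uniformly in $n$, which is exactly equicontinuity of $\{(M_\tau^*)^n\circ\iota\}$ at $(z,w)$.

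There is no real obstacle. The only point needing care is that in (i)$\Rightarrow$(ii) the function $\phi$ need not belong to the dense sequence. This is handled by the uniform bound $\|M_\tau^n\|=1$, which makes the approximation error independent of $n$. One should also check that the paper's choice of metric is immaterial: equicontinuity into a compact metrizable space does not depend on which compatible metric is used, since all compatible metrics are uniformly equivalent there.
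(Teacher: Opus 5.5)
Your argument is correct. The paper gives no proof of this lemma; it is quoted from the author's earlier Julia--Fatou paper for RSCCs, which generalizes Sumi's version. Your route is the standard proof: the duality identity $\int_{\mathbb Y}\phi\,d\bigl((M_\tau^{*})^n\delta_y\bigr)=M_\tau^n\phi(y)$ read through the dense-sequence metric, with $\|M_\tau^n\|\le 1$ making the approximation of $\phi$ by $\phi_j$ uniform in $n$.

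One cosmetic point: (i) is indexed by $n\in\mathbb N$ while (ii) includes $n=0$. This is harmless, since $M_\tau^0\phi=\phi$ is continuous and you can simply shrink the neighborhood.
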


\begin{lemma}[Equivalence of measure-theoretic and pointwise equicontinuity]
\label{lem:Fmeas-Fpt-equivalence}
Let $M_\tau:C(\mathbb{Y})\to C(\mathbb{Y})$ be a Markov operator and
$M_\tau^{*}:\mathfrak{M}_{1}(\mathbb{Y})\to\mathfrak{M}_{1}(\mathbb{Y})$
its adjoint.
Then the following are equivalent:
\begin{enumerate}[label=\textup{(\roman*)}]
\item
$F_{\mathrm{meas}}(M_\tau^{*})=\mathfrak{M}_{1}(\mathbb{Y})$;

\item
$F_{\mathrm{pt}}^{0}(M_\tau^{*})=\mathbb{Y}$.
\end{enumerate}
\end{lemma}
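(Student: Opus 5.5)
The plan is to prove the two implications separately. The direction (i)$\Rightarrow$(ii) is formal. The direction (ii)$\Rightarrow$(i) goes through Lemma~\ref{lem:pt-equicontinuity-Mtau}, compactness of $\mathbb Y$, and an Arzel\`a--Ascoli argument.

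\emph{(i)$\Rightarrow$(ii).} First I check that $\iota:\mathbb Y\to\mathfrak M_1(\mathbb Y)$ is continuous for the weak-$*$ topology. This holds because $\int\phi\,d\delta_{y}=\phi(y)$ is continuous in $y$ for every $\phi\in C(\mathbb Y)$. Now fix $y\in\mathbb Y$. By (i), $\delta_y\in F_{\mathrm{meas}}(M_\tau^*)$, so the family $\{(M_\tau^*)^n\}_n$ is equicontinuous at $\delta_y$. Given $\varepsilon>0$, this gives a $\delta$-ball around $\delta_y$ on which all $(M_\tau^*)^n$ move points by less than $\varepsilon$. By continuity of $\iota$, a neighborhood of $y$ is mapped into that ball. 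Hence $\{(M_\tau^*)^n\circ\iota\}_n$ is equicontinuous at $y$, that is, $y\in F^0_{\mathrm{pt}}(M_\tau^*)$.

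\emph{(ii)$\Rightarrow$(i).} By Lemma~\ref{lem:pt-equicontinuity-Mtau}, for every $\phi\in C(\mathbb Y)$ the family $\{M_\tau^n\phi\}_{n\ge0}$ is equicontinuous at every point of $\mathbb Y$. Since $\mathbb Y$ is compact, the standard finite-subcover argument upgrades this to uniform equicontinuity on $\mathbb Y$. The family is also uniformly bounded by $\|\phi\|_\infty$, because $\|M_\tau\|=1$ by Lemma~\ref{lem:Mtau-is-Markov}. By Arzel\`a--Ascoli, $\{M_\tau^n\phi\}_n$ is therefore totally bounded in $C(\mathbb Y)$.

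Next I show that $\{(M_\tau^*)^n\}_n$ is equicontinuous at every point of $\mathfrak M_1(\mathbb Y)$; then $U=\mathfrak M_1(\mathbb Y)$ serves as the neighborhood in Definition~\ref{def:Fatou-meas}. I work with the metric $d_{\mathfrak M_1(\mathbb Y)}$ built from a dense sequence $(\phi_j)$. Fix $\mu$ and $\varepsilon>0$, and choose $N$ with $\sum_{j>N}2^{-j}<\varepsilon/2$. For each $j\le N$, pick a finite $\eta$-net $\{\psi_{j,1},\dots,\psi_{j,k_j}\}$ of $\{M_\tau^n\phi_j\}_n$ in the sup norm. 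Let $V$ be the weak-$*$ neighborhood
\[
V=\Bigl\{\nu:\ \Bigl|\int\psi_{j,i}\,d\nu-\int\psi_{j,i}\,d\mu\Bigr|<\eta\ \text{for all } j\le N,\ i\le k_j\Bigr\}.
\]
For $\nu\in V$ and any $n$, the duality $\int\phi_j\,d(M_\tau^*)^n\nu=\int M_\tau^n\phi_j\,d\nu$ gives
\[
\Bigl|\int\phi_j\,d(M_\tau^*)^n\nu-\int\phi_j\,d(M_\tau^*)^n\mu\Bigr|<3\eta
\quad\text{for all } j\le N,
\]
uniformly in $n$. The $3\eta$ comes from approximating $M_\tau^n\phi_j$ by the nearest net element, once under $\nu$ and once under $\mu$, plus the $\eta$ from the definition of $V$. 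Choosing $\eta$ small enough then yields $d\bigl((M_\tau^*)^n\nu,(M_\tau^*)^n\mu\bigr)<\varepsilon$ for all $n$. Since the metric induces the weak-$*$ topology, $V$ contains a metric ball around $\mu$, which gives equicontinuity at $\mu$.

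The main obstacle is the passage from pointwise equicontinuity at each point of $\mathbb Y$ to the total boundedness of $\{M_\tau^n\phi\}$. This is exactly where compactness of $\mathbb Y$, which comes from the CMSS condition, together with Lemma~\ref{lem:pt-equicontinuity-Mtau} is essential. The rest is bookkeeping with the metric $d_{\mathfrak M_1(\mathbb Y)}$.
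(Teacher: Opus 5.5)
Your proof is correct. The paper gives no proof of this lemma to compare against: it recalls the lemma from \cite{endo2026juliafatoutheoryrandomsystems}, as a generalization of the corresponding results in \cite{MR2747724,MR4002398}, and uses it only as a black box in the proof of Theorem~\ref{thm:continuity-product-escape}. Your argument is therefore a self-contained substitute, and it needs nothing beyond compactness of $\mathbb Y$ and the Markov property of $M_\tau$. In (i)$\Rightarrow$(ii) you only use (i) at Dirac measures, together with weak-$*$ continuity of $\iota$. In (ii)$\Rightarrow$(i) the key step is total boundedness of each orbit $\{M_\tau^n\phi_j\}_n$, which you obtain from Lemma~\ref{lem:pt-equicontinuity-Mtau}, compactness, $\|M_\tau\|=1$ and Arzel\`a--Ascoli. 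This is exactly what lets finitely many test functions $\psi_{j,i}$ control $(M_\tau^*)^n$ uniformly in $n$.

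Two small points are worth a sentence each. First, in (i)$\Rightarrow$(ii), shrink the $\delta$-ball around $\delta_y$ so that it lies inside the neighborhood $U$ provided by Definition~\ref{def:Fatou-meas}. Second, if ``equicontinuous on $U$'' there is read as uniform equicontinuity on $U$, you should note that your estimate is in fact uniform. Each $\psi_{j,i}$ is within $\eta$ in sup norm of some $\phi_k$. For each fixed $k$, the quantity $\bigl|\int\phi_k\,d\nu_1-\int\phi_k\,d\nu_2\bigr|$ is controlled by $d_{\mathfrak M_1(\mathbb Y)}(\nu_1,\nu_2)$. Alternatively, invoke compactness of $\mathfrak M_1(\mathbb Y)$ to upgrade pointwise equicontinuity to uniform equicontinuity.
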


The following theorem is Cooperation Principle~I, established in \cite{endo2026juliafatoutheoryrandomsystems}, which generalizes corresponding results in \cite{MR2747724,MR4002398}.

\begin{theorem}[Cooperation Principle~I]
\label{thm:CP-Riemann}
Let $\lambda$ be a finite Borel measure on $\widehat{\mathbb C}$.
Assume that
\(
J_{\ker,w}(S_\tau)=\emptyset
\quad\text{for all } w\in W.
\)
Then the following assertions hold.
\begin{enumerate}
[label=\textup{(\roman*)}]
\item
$F_{\mathrm{meas}}(M_\tau^{*})=\mathfrak{M}_1(\mathbb{Y})$.

\item
For every $w \in W$ and for $\tilde{\tau}_w$-almost every
$\xi=(\gamma_n,x_n)\in \Xi_w(S_\tau)$, we have
\(
\lambda(J_\xi)=0.
\)
\end{enumerate}
\end{theorem}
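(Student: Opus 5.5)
The plan is to prove (i) by identifying the pointwise Julia set $J_{\mathrm{pt}}:=\mathbb Y\setminus F^{0}_{\mathrm{pt}}(M_\tau^{*})$ and showing that it is empty. Lemma~\ref{lem:Fmeas-Fpt-equivalence} then gives $F_{\mathrm{meas}}(M_\tau^*)=\mathfrak M_1(\mathbb Y)$. Lemma~\ref{lem:pt-equicontinuity-Mtau} lets us work with test functions: $(z,w)\in J_{\mathrm{pt}}$ if and only if there is $\phi\in C(\mathbb Y)$ such that $\{M_\tau^n\phi\}_n$ is not equicontinuous at $(z,w)$. The argument has two structural steps, after which the kernel condition closes it.

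\textbf{Step 1 (Fatou points).} Show that if $z\in F_w(S_\tau)$, then $(z,w)\notin J_{\mathrm{pt}}$. Expanding $M_\tau^n\phi(z',w')$ as an integral of $\phi(\gamma_{n,1}(z'),w'x^{(n)})$ against $\tilde\tau_{w'}$ handles the spatial variable: equicontinuity of $H_w(S_\tau)$ near $z$ gives equicontinuity of $M_\tau^n\phi(\cdot,w)$ at $z$. The state variable is the real difficulty, because moving $w'$ changes both the weights $P_n(w',\cdot)$ and the admissible words. I would control it with Assumption~\ref{ass:standing-cooperation}: $X$ is finite, $P(\cdot,\{x\})$ is continuous, and $Q$ is continuous.

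\textbf{Step 2 (backward propagation).} Write $M_\tau^{n+1}\phi=M_\tau(M_\tau^n\phi)$. If the family $\{M_\tau^n\phi\}$ is equicontinuous at every point $(\gamma(z),wx)$ with $x\in X_{w,1}$ and $\gamma\in\Gamma_x$, I want to conclude that it is equicontinuous at $(z,w)$. The tools are the finite sum over $x$, continuity of $w\mapsto P(w,\{x\})$ and of the update map, and dominated convergence in $\tau_x$, using $\|M_\tau^n\phi\|_\infty\le\|\phi\|_\infty$. Iterating, if $(z,w)\in J_{\mathrm{pt}}$, then every admissible image $(\gamma(z),v)$ with $\gamma\in H_w^v(S_\tau)$ lies in $J_{\mathrm{pt}}$. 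By Step 1 this forces $\gamma(z)\in J_v(S_\tau)$ for all such $v$ and $\gamma$, that is, $z\in J_{\ker,w}(S_\tau)=\emptyset$. Hence $J_{\mathrm{pt}}=\emptyset$.

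I expect the main obstacle to be making Steps 1 and 2 uniform. Pointwise equicontinuity at each image point does not automatically give equicontinuity of the integral, since $\Gamma_x$ need not be compact and $u$ need not be continuous in $w$. I would first try to pass to the compact set $\{(\gamma(z),wx)\}$, working with $\overline{\Gamma_x}$ intersected with large compact subsets of $\Poly$ and keeping a tail of small $\tau_x$-mass. If that fails, I would replace equicontinuity at points by equicontinuity on neighbourhoods, in the spirit of $F_{\mathrm{pt}}$. In that version, $J_{\mathrm{pt}}$ becomes a closed set that is forward invariant under the admissible dynamics, and the same kernel argument applies.

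For (ii), I would follow Sumi's argument. Fix $w$ and $z$. Using (i), I would show that for $\tilde\tau_w$-a.e.\ $\xi$ the point $z$ lies in the Fatou set $F_\xi$ of the composition sequence $\gamma_{n,1}$. By the kernel condition, a.s.\ some finite composition sends $z$ into the Fatou set of the current state. Combining this with the Markov property of Theorem~\ref{thm:RSCC-existence} and the equicontinuity from (i) applied to spherical test functions, one controls the random orbit near $z$ uniformly in $n$. Measurability of $\{(z,\xi): z\in J_\xi\}$ and Fubini then give $\int\lambda(J_\xi)\,d\tilde\tau_w(\xi)=\int\tilde\tau_w(\{\xi: z\in J_\xi\})\,d\lambda(z)=0$.
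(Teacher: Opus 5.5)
The paper does not prove this theorem; it is quoted from \cite{endo2026juliafatoutheoryrandomsystems}. Your argument therefore has to stand on its own, and it has two genuine gaps.

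\textbf{Step~2 inverts a quantifier.} The one-step identity $M_\tau^{n+1}\phi(z,w)=\sum_{x}P(w,\{x\})\int_{\Gamma_x}M_\tau^{n}\phi(\gamma(z),wx)\,d\tau_x(\gamma)$ can give you only this: if $\{M_\tau^n\phi\}$ is equicontinuous at \emph{every} admissible image, then it is equicontinuous at $(z,w)$. The contrapositive is that $(z,w)\in J_{\mathrm{pt}}$ forces \emph{some} admissible image into $J_{\mathrm{pt}}$, not all of them. An equicontinuous summand does not cancel a non-equicontinuous one, so $J_{\mathrm{pt}}$ is not forward invariant. Your closed-set fallback has the same defect.

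Iterating, together with Step~1, gives only one admissible path along which the orbit stays in the statewise Julia sets. That is compatible with kernel emptiness: for the i.i.d.\ system generated by $z^2$ and $z^2/4$ the kernel Julia set is empty, yet the unit circle is invariant under $z^2$.

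The kernel condition has to be used quantitatively. The route would be as follows.
\begin{enumerate}
\item Each point has some admissible word carrying it into the Fatou set of the reached state. Openness and compactness should turn this into $N$ and $c<1$ such that, from every starting point, the probability of not entering the Fatou region within $N$ steps is at most $c$.
\item The Markov property then bounds this by $c^k$ after $kN$ steps.
\item Fatou sets are forward invariant: for $\gamma\in H_w^v(S_\tau)$ one has $H_v(S_\tau)\circ\gamma\subset H_w(S_\tau)$, and polynomials are open maps, so $\gamma(F_w(S_\tau))\subset F_v(S_\tau)$. Hence a path that has entered the Fatou region stays there.
\item Split $M_\tau^n\phi(z',w')-M_\tau^n\phi(z,w)$ into two parts. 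Paths that entered a compact part of the Fatou region before time $kN$ are controlled by equicontinuity; the remaining paths have mass at most $c^k$.
\end{enumerate}
The same estimate gives $\tilde\tau_w(\{\xi: z\in J_\xi\})=0$ for each $z$, which is exactly what your Fubini argument for (ii) needs. Your sentence ``by the kernel condition, a.s.\ some finite composition sends $z$ into the Fatou set'' is this estimate, asserted without proof. Pointwise kernel emptiness does not give it without a uniform bound. Uniformity in $w$ is delicate, because $F_w(S_\tau)$ can jump with $w$; compare $F_0$ and $F_p$ in Example~\ref{ex:two-regimes-simplex}. Prove (ii) directly from such an estimate rather than through (i).

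\textbf{Step~1 fails in the state variable.} Finiteness of $X$ and continuity of $P(\cdot,\{x\})$ and of $Q$ give no control over the iterates $\{\U^n\}$. This is not a technicality. Take $W=\mathbb R/\mathbb Z$, $X=\{0,1\}$, $u(w,x)=2w$, $P(w,\{0\})=P(w,\{1\})=\frac12$, $\tau_0=\delta_{z^2}$ and $\tau_1=\delta_{z^2/4}$.
\begin{itemize}
\item All standing assumptions hold.
\item For every $w$, $H_w(S_\tau)$ is the semigroup generated by $z^2$ and $z^2/4$. Hence $J_w(S_\tau)=\{1\le|z|\le 4\}$ and $J_{\ker,w}(S_\tau)=\emptyset$.
\item For $\phi(z,w)=e^{2\pi i w}$ one gets $M_\tau^n\phi(z,w)=e^{2\pi i\,2^{n}w}$. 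So $F^0_{\mathrm{pt}}(M_\tau^*)=\emptyset$, even though $0\in F_w(S_\tau)$.
\end{itemize}
Thus your Step~1 is false under the hypotheses visible in this paper, and so is assertion (i). A proof of (i) must invoke some additional contraction or equicontinuity hypothesis on the state chain and use it explicitly in the $w$-direction. Such a hypothesis is presumably part of the cited source's setting but is not stated here. Part (ii) involves only the spatial dynamics and is not affected by this counterexample.
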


\section{Statewise Escaping Behavior}
\label{sec:statewise-escaping}

In this section, we fix an initial state and consider the random dynamical system induced from that state by Theorem~\ref{thm:RSCC-existence}. In particular, we study the probability that a point \(z\in\widehat{\mathbb C}\) tends to \(\infty\), that is, the escaping probability.

\subsection{Statewise Escaping Probability}
\label{subsec:statewise-escape}

\begin{definition}\label{def:escape-probability-tilde-tau-w}
Fix a state $w\in W$.
For each $z\in\widehat{\mathbb C}$, we define
\[
T_{\infty,\tilde{\tau}_w}(z)
:=
\tilde{\tau}_w\!\left(
\left\{
\xi=(\gamma_n,x_n)_{n\in\mathbb N}\in \Xi_w(S_\tau)
\ : \
\gamma_{n,1}(z)\to\infty
\text{ in } \widehat{\mathbb C}
\text{ as } n\to\infty
\right\}
\right).
\]

The quantity $T_{\infty,\tilde{\tau}_w}(z)$ is called the
\emph{probability of tending (or escaping) to infinity starting from $z$ at state $w$}.
The function $T_{\infty,\tilde{\tau}_w}\colon \widehat{\mathbb C}\to[0,1]$
is called the \emph{statewise escaping probability function}.
\end{definition}

In analogy with \cite[Lemma~4.21]{MR4002398}, the following lemma establishes
local constancy of the escaping probability on the Fatou set.

\begin{lemma}
\label{lem:escape-local-constancy-Fatou}
Let $w\in W$. 
The function
\[
T_{\infty,\tilde{\tau}_w}\colon \widehat{\mathbb C}\to[0,1]
\]
is locally constant on the Fatou set $F_w(S_\tau)$.
That is, for every $z\in F_w(S_\tau)$ there exists an open neighborhood
$U\subset \widehat{\mathbb C}$ of $z$ such that
\[
T_{\infty,\tilde{\tau}_w}(y_1)
=
T_{\infty,\tilde{\tau}_w}(y_2)
\quad
\text{for all } y_1,y_2\in U.
\]
\end{lemma}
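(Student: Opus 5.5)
Fix $z_0\in F_w(S_\tau)$. Since $J_w(S_\tau)$ is closed and $F_w(S_\tau)$ is open, we choose a connected open neighborhood $U$ of $z_0$ (a small spherical disk) with $\overline U\subset F_w(S_\tau)$ on which $H_w(S_\tau)$ is equicontinuous. The plan is to show that, for every admissible path $\xi=(\gamma_n,x_n)\in\Xi_w(S_\tau)$, the set $E_\xi:=\{y\in U:\gamma_{n,1}(y)\to\infty\}$ is either empty or all of $U$. Once this is established, for $y_1,y_2\in U$ the events $\{\xi:\gamma_{n,1}(y_1)\to\infty\}$ and $\{\xi:\gamma_{n,1}(y_2)\to\infty\}$ coincide as subsets of $\Xi_w(S_\tau)$. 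Hence $T_{\infty,\tilde\tau_w}(y_1)=T_{\infty,\tilde\tau_w}(y_2)$ directly from Definition~\ref{def:escape-probability-tilde-tau-w}, with no measure-theoretic work beyond measurability of these events, which is already implicit in the definition.

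First we check that each $\gamma_{n,1}$ with $\xi\in\Xi_w(S_\tau)$ lies in $H_w(S_\tau)$. Since $(x_n)\in\supp\mathbf P_w$ and $X$ is finite and discrete, every cylinder $[x_1,\dots,x_n]$ has positive $\mathbf P_w$-measure, so $(x_1,\dots,x_n)\in X_{w,n}$. Moreover $\gamma_j\in\Gamma_{x_j}$, so $\gamma_{n,1}\in H_w(S_\tau)$. Consequently $\{\gamma_{n,1}\}_n$ is equicontinuous on $U$ with respect to the spherical metric. By Arzel\`a--Ascoli, every subsequence has a further subsequence converging uniformly on $\overline U$ (shrinking $U$ slightly if needed) to a continuous map $\varphi\colon U\to\widehat{\mathbb C}$.

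Next we show that $E_\xi$ is open and closed in $U$.

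\textbf{Closedness.} Let $y_k\in E_\xi$ with $y_k\to y\in U$. Given $\varepsilon>0$, equicontinuity gives $\delta$ such that $d(\gamma_{n,1}(y),\gamma_{n,1}(y_k))<\varepsilon$ for all $n$ once $d(y,y_k)<\delta$. Since $\gamma_{n,1}(y_k)\to\infty$, it follows that $\limsup_n d(\gamma_{n,1}(y),\infty)\le\varepsilon$. Letting $\varepsilon\to0$ gives $y\in E_\xi$.

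\textbf{Openness.} The same argument shows that every $y'$ in the $\delta$-ball around $y\in E_\xi$ satisfies $\limsup_n d(\gamma_{n,1}(y'),\infty)\le\varepsilon$, but this bound is not yet a limit, so a different step is needed. We argue by subsequences instead. Suppose $y\in E_\xi$ and $y'\in U\setminus E_\xi$. Then some subsequence satisfies $\gamma_{n_j,1}(y')\to a\neq\infty$. Pass to a further subsequence converging locally uniformly on $U$ to $\varphi$. Then $\varphi(y)=\infty$ and $\varphi(y')=a\in\mathbb C$. The key point is that a locally uniform limit of polynomials (holomorphic maps $U\to\widehat{\mathbb C}$) is either holomorphic or $\equiv\infty$. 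By Hurwitz-type reasoning for the holomorphic functions $1/\gamma_{n_j,1}$ near $y$, the set $\varphi^{-1}(\infty)$ is open or discrete.

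\textbf{Main obstacle.} We must rule out an isolated point where $\varphi=\infty$. Here we use the polynomial structure. Each $\gamma_{n,1}$ is a polynomial, so by the maximum principle applied on a small disk $D\ni y$ with $\partial D\subset U$, we have $\sup_{\partial D}|\gamma_{n,1}|\ge|\gamma_{n,1}(y)|\to\infty$ along the subsequence. The plan is to combine this with uniform convergence: if $\varphi$ were finite on $\partial D$, then $|\gamma_{n_j,1}|$ would be uniformly bounded on $\partial D$, and hence on $D$ by the maximum principle, contradicting $\gamma_{n_j,1}(y)\to\infty$. Thus $\varphi\equiv\infty$ on every small circle around $y$, so on a neighborhood of $y$. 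Then $\varphi^{-1}(\infty)$ is open and closed in the connected set $U$, which gives $\varphi\equiv\infty$ and contradicts $\varphi(y')=a$. So either $E_\xi=\emptyset$ or $E_\xi=U$, which finishes the proof.
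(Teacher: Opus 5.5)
Your skeleton matches the paper's: take a connected neighborhood $U$ on which $H_w(S_\tau)$ is equicontinuous, show that for each admissible path $\xi$ the set $E_\xi$ is $\emptyset$ or $U$, and conclude that the escape events for $y_1,y_2\in U$ coincide. You are right that equicontinuity alone only gives $\limsup_n d_{\widehat{\mathbb C}}(\gamma_{n,1}(y'),\infty)\le 2\varepsilon$ near an escaping point, with $\delta$ depending on $\varepsilon$. The paper's openness step passes from exactly this bound to $g_n(y')\to\infty$, which does not follow, so bringing in holomorphy is the right repair.

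For $z_0\in F_w(S_\tau)\cap\mathbb C$, with $U\subset\mathbb C$ a bounded disk, your argument works, but the maximum-principle step overclaims. It shows only that $\varphi$ takes the value $\infty$ \emph{somewhere} on each small circle about $y$, not that $\varphi\equiv\infty$ there. You can skip it. For $y\in U\subset\mathbb C$ with $\varphi(y)=\infty$, the functions $1/\gamma_{n_j,1}$ are eventually zero-free and holomorphic on a small disk about $y$, and they converge uniformly to $1/\varphi$, which vanishes at $y$. Hurwitz's theorem then gives $1/\varphi\equiv 0$ near $y$ at once, so there is no ``discrete'' alternative to rule out.

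The genuine gap is the case $z_0=\infty$. Your spherical disk $U$ then contains $\infty$, and $\infty\in E_\xi$ for every $\xi$. Running your argument on the connected set $U\setminus\{\infty\}$ handles any $\xi$ for which $E_\xi$ contains a finite point. The remaining configuration is $E_\xi=\{\infty\}$, where the key step must be run at $y=\infty$, and there it fails. The maximum principle on a disk containing $\infty$ is false for polynomials, and Hurwitz does not apply because $1/\gamma_{n_j,1}$ vanishes at $\infty$. So nothing prevents $\varphi$ from having an isolated pole at $\infty$.

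This cannot be patched in the stated generality, because $\Poly$ contains degree-one maps. Take one state, one index, and $\tau_x=\delta_{\mathrm{id}}$ (or a rotation $z\mapsto e^{i\theta}z$). Then $H_w(S_\tau)$ is equicontinuous on $\widehat{\mathbb C}$, so $F_w(S_\tau)=\widehat{\mathbb C}$, yet $T_{\infty,\tilde\tau_w}=\mathbf 1_{\{\infty\}}$ is not locally constant at $\infty$. The paper's faulty openness step is exactly what hides this.

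An extra hypothesis is needed at $\infty$, for instance $\Gamma_x\subset\Poly_+$. Then your argument on $U\setminus\{\infty\}$ shows $\varphi$ is finite there. In the chart $\zeta=1/z$, the functions $1/\gamma_{n_j,1}(1/\zeta)$ have a zero of order $\deg\gamma_{n_j,1}\ge 2^{n_j}$ at $\zeta=0$ and converge uniformly to $1/\varphi(1/\zeta)$. That limit has an isolated zero of finite order at $0$, contradicting Hurwitz's zero count, so $E_\xi=U$. If each $\Gamma_x$ is moreover compact, Claim~1 in the proof of Proposition~\ref{prop:char-filledin-escape-compact} gives $T_{\infty,\tilde\tau_w}\equiv 1$ near $\infty$ directly.
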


\begin{proof}
Fix $w\in W$ and $z\in F_w(S_\tau)$.
By definition of the Fatou set, there exists an open neighborhood $U$ of $z$
such that the family $H_w(S_\tau)$ is equicontinuous on $U$
with respect to the spherical metric $d_{\widehat{\mathbb C}}$.
Shrinking $U$ if necessary, one may assume that $U$ is connected.

For a fixed admissible infinite sequence
$\xi=(\gamma_n,x_n)_{n\in\mathbb N}\in\Xi_w(S_\tau)$,
set $g_n:=\gamma_{n,1}\in H_w(S_\tau)$ and define
\[
E_\xi
:=
\bigl\{
y\in U : g_n(y)\to\infty
\text{ in } \widehat{\mathbb C}
\text{ as } n\to\infty
\bigr\}.
\]

Let $y\in E_\xi$ and $\varepsilon>0$.
By equicontinuity on $U$, there exists $\delta>0$
such that for all $\gamma\in H_w(S_\tau)$ and all $a,b\in U$,
\[
d_{\widehat{\mathbb C}}(a,b)<\delta
\Longrightarrow
d_{\widehat{\mathbb C}}(\gamma(a),\gamma(b))<\varepsilon.
\]
Since $g_n(y)\to\infty$, there exists $N$ such that
$d_{\widehat{\mathbb C}}(g_n(y),\infty)<\varepsilon$
for all $n\ge N$.
If $y'\in U$ satisfies $d_{\widehat{\mathbb C}}(y',y)<\delta$, then for $n\ge N$,
\[
d_{\widehat{\mathbb C}}(g_n(y'),\infty)
\le
d_{\widehat{\mathbb C}}(g_n(y'),g_n(y))
+
d_{\widehat{\mathbb C}}(g_n(y),\infty)
<
2\varepsilon,
\]
and hence $g_n(y')\to\infty$.
Thus $E_\xi$ is open in $U$.

If $y\in U\setminus E_\xi$, then $g_n(y)\not\to\infty$.
There exist $\eta>0$ and a subsequence $n_k\to\infty$ such that
\[
d_{\widehat{\mathbb C}}(g_{n_k}(y),\infty)\ge \eta
\quad\text{for all }k.
\]
Applying equicontinuity with $\varepsilon=\eta/2$
yields $\delta>0$ such that whenever
$d_{\widehat{\mathbb C}}(y',y)<\delta$,
\[
d_{\widehat{\mathbb C}}(g_{n_k}(y'),g_{n_k}(y))<\eta/2
\quad\text{for all }k.
\]
Consequently,
\[
d_{\widehat{\mathbb C}}(g_{n_k}(y'),\infty)
\ge
d_{\widehat{\mathbb C}}(g_{n_k}(y),\infty)
-
d_{\widehat{\mathbb C}}(g_{n_k}(y'),g_{n_k}(y))
\ge
\eta/2,
\]
and hence $g_n(y')$ does not converge to $\infty$.
Thus $U\setminus E_\xi$ is open and $E_\xi$ is closed in $U$.

Since $U$ is connected, it follows that either $E_\xi=\emptyset$
or $E_\xi=U$.
Therefore, for the fixed sequence $\xi$,
the property $g_n(y)\to\infty$ is independent of $y\in U$.

For $y\in U$, define
\[
A_y
:=
\bigl\{
\xi\in\Xi_w(S_\tau) :
\gamma_{n,1}(y)\to\infty
\text{ as } n\to\infty
\bigr\}.
\]
The preceding argument implies that $A_{y_1}=A_{y_2}$ for all $y_1,y_2\in U$.
Taking $\tilde{\tau}_w$–probabilities gives
\[
T_{\infty,\tilde{\tau}_w}(y_1)
=
T_{\infty,\tilde{\tau}_w}(y_2)
\quad
\text{for all } y_1,y_2\in U.
\]

Hence $T_{\infty,\tilde{\tau}_w}$ is locally constant on $F_w(S_\tau)$.
\end{proof}

\subsection{The Filled-in Julia Set}
\label{subsec:filledin}

Next, we introduce the filled-in Julia set and study its properties.

\begin{definition}
\label{def:smallest-filledin-RSCC}
For each $w\in W$, we define the \emph{smallest filled-in Julia set of $S_\tau$ at $w$} by
\[
K_w(S_\tau)
:=
\left\{
z\in\widehat{\mathbb C}
\ : \
\{\gamma(z): \gamma\in H_w(S_\tau)\}
\text{ is bounded in } \mathbb C
\right\}.
\]
\end{definition}

\begin{proposition}
\label{prop:boundary-filledin-escape}
For each $w\in W$, we have
\(
\partial K_w(S_\tau)\subset J_w(S_\tau).
\)
\end{proposition}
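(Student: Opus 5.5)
We argue by contradiction. Fix $w\in W$ and $z_0\in\partial K_w(S_\tau)$, and suppose $z_0\in F_w(S_\tau)$. Then there is a connected open neighborhood $U$ of $z_0$ on which $H_w(S_\tau)$ is equicontinuous with respect to $d_{\widehat{\mathbb C}}$. Since $z_0$ is a boundary point, $U$ contains a point $a\in K_w(S_\tau)$ and a point $b\notin K_w(S_\tau)$. The plan is to show that $U\cap K_w(S_\tau)$ is both open and closed in $U$. Connectedness then forces $U\subset K_w(S_\tau)$ or $U\cap K_w(S_\tau)=\emptyset$, and either alternative contradicts the choice of $a$ and $b$.

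First we settle the point $\infty$. Every $\gamma\in H_w(S_\tau)$ is a polynomial, so $\gamma(\infty)=\infty$ and $\infty\notin K_w(S_\tau)$. It follows that $z_0\neq\infty$: polynomials of degree $\ge1$ send a neighborhood of $\infty$ into a neighborhood of $\infty$, so $\infty$ is not in the closure of $K_w(S_\tau)$. Hence we may take $U\subset\mathbb C$ with compact closure in $\mathbb C$. Boundedness of $\{\gamma(y):\gamma\in H_w\}$ in $\mathbb C$ is then equivalent to the spherical distance $d_{\widehat{\mathbb C}}(\gamma(y),\infty)$ being bounded below by a positive constant uniformly in $\gamma$.

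Next come the two open/closed steps, both copies of the argument in Lemma~\ref{lem:escape-local-constancy-Fatou}, with the sequence $g_n$ replaced by the whole family $H_w(S_\tau)$.

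\emph{Openness.} Let $y\in U\cap K_w$, and choose $\eta>0$ with $d_{\widehat{\mathbb C}}(\gamma(y),\infty)\ge\eta$ for all $\gamma\in H_w$. Equicontinuity with $\varepsilon=\eta/2$ gives $\delta>0$ such that, for $y'\in U$ with $d(y',y)<\delta$, we have $d(\gamma(y'),\infty)\ge\eta/2$ for all $\gamma\in H_w$. Thus $y'\in K_w$.

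\emph{Closedness in $U$.} Let $y\in U\setminus K_w$. Then there are $\gamma_k\in H_w$ with $d(\gamma_k(y),\infty)\to0$. For $y'\in U$ close to $y$, equicontinuity gives
\[
d(\gamma_k(y'),\infty)\le d(\gamma_k(y),\infty)+\varepsilon
\]
for every $k$ and every prescribed $\varepsilon$. Since $\varepsilon$ is arbitrary, this forces $\inf_{\gamma\in H_w} d(\gamma(y'),\infty)=0$, so $y'\notin K_w$. Alternatively, closedness can be checked directly: if $y_j\to y$ in $U$ with $y_j\in K_w$, the uniform lower bounds $\eta_j$ need control, and equicontinuity applied at $y$ supplies it.

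The main obstacle is the closedness step. The bound $\eta$ is not uniform across points, so a naive limit argument can lose it. Equicontinuity at the single point $y$ with one fixed $\varepsilon$ must carry the estimate, and we must be careful that "bounded in $\mathbb C$" is exactly "spherically bounded away from $\infty$". The supremum-over-$\gamma$ argument above handles this cleanly. Once $U\cap K_w$ is shown to be clopen in the connected set $U$, the contradiction follows, and so $\partial K_w(S_\tau)\subset J_w(S_\tau)$.
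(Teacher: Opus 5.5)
Your openness step is correct, and it already rules out boundary points that lie in $K_w(S_\tau)$. The gap is the closedness step, which contains a quantifier error. The $\delta$ giving $d_{\widehat{\mathbb C}}(\gamma_k(y'),\gamma_k(y))<\varepsilon$ depends on $\varepsilon$. So for a fixed $y'\neq y$ you only get $\inf_{\gamma\in H_w(S_\tau)} d_{\widehat{\mathbb C}}(\gamma(y'),\infty)\le\varepsilon$ for those $\varepsilon$ with $\delta(\varepsilon)>d_{\widehat{\mathbb C}}(y,y')$, not that the infimum vanishes. No neighborhood of $y$ avoiding $K_w(S_\tau)$ is produced, and your ``alternative'' sketch has the same defect.

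Equicontinuity and distances to $\infty$ alone cannot close this. The maps $h_n(y)=1/\max\{1/n,|y|\}$ on $\mathbb C$ form an equicontinuous family (they are $2$-Lipschitz into the chordal metric), yet $\{h_n(y)\}_n$ is bounded for every $y\neq 0$ and unbounded at $y=0$. The missing ingredient is holomorphy:
\begin{enumerate}[label=\textup{(\arabic*)}]
\item Pick $\gamma_k\in H_w(S_\tau)$ with $\gamma_k(b)\to\infty$. By Arzel\`a--Ascoli, a subsequence converges locally uniformly on $U$ (spherically) to some $g$.
\item Near any point where $g=\infty$, the functions $1/\gamma_k$ are eventually holomorphic and zero-free. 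By Hurwitz's theorem, $1/g\equiv 0$ there.
\item Thus $\{g=\infty\}$ is open and closed in the connected set $U$ and contains $b$, so $g\equiv\infty$.
\item Hence $\gamma_k(y')\to\infty$ for every $y'\in U$, i.e.\ $U\cap K_w(S_\tau)=\emptyset$, contradicting $a\in U$.
\end{enumerate}
The paper's two-point argument does not supply the missing uniformity either. Its $\delta$ is fixed from the constant $C$ determined by $z_0$, after which $z_0$ is re-chosen. That argument is sound only with $z_0=z\in K_w(S_\tau)$, which is the case your openness step already handles.

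Your reduction to $z_0\neq\infty$ is also unjustified. Each polynomial maps some neighborhood of $\infty$ into a neighborhood of $\infty$, but not uniformly over $H_w(S_\tau)$, so $\infty$ can lie in $\overline{K_w(S_\tau)}$. In fact the statement fails at $\infty$ without further hypotheses. Take a single state and a single index with $\tau_x=\delta_{\mathrm{id}}$. Then $H_w(S_\tau)=\{\mathrm{id}\}$, $J_w(S_\tau)=\emptyset$, $K_w(S_\tau)=\mathbb C$, and $\partial K_w(S_\tau)=\{\infty\}$. So what your argument proves in general, with the Hurwitz repair, is $\partial K_w(S_\tau)\cap\mathbb C\subset J_w(S_\tau)$.

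For the full inclusion you need an assumption such as compactness of each $\Gamma_x$ in $\Poly_{+}$. That yields a neighborhood of $\infty$ from which every admissible orbit escapes (Claim~1 in the proof of Proposition~\ref{prop:char-filledin-escape-compact}), which makes $K_w(S_\tau)$ closed and bounded. Every boundary point then lies in $K_w(S_\tau)\cap\mathbb C$, and your openness step alone suffices.
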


\begin{proof}
Fix $w\in W$ and let $z\in\partial K_w(S_\tau)$.
Assume for a contradiction that $z\in F_w(S_\tau)$.
Then there exists an open neighborhood $U$ of $z$ such that
the family $H_w(S_\tau)$ is equicontinuous on $U$
with respect to the spherical metric $d_{\widehat{\mathbb C}}$.

Since $z$ lies on the boundary of $K_w(S_\tau)$, we may choose
\[
z_0 \in U\cap K_w(S_\tau),
\qquad
z_1 \in U\setminus K_w(S_\tau).
\]

By definition of $K_w(S_\tau)$, the set
\[
B_0 := \{ \gamma(z_0) : \gamma \in H_w(S_\tau) \}
\]
is bounded in $\mathbb C$.
Consequently, its closure $\overline{B_0}$ is a compact subset of
$\widehat{\mathbb C}\setminus\{\infty\}$.
In particular,
\[
d_{\widehat{\mathbb C}}(\overline{B_0},\infty)
:=
\inf_{a\in\overline{B_0}} d_{\widehat{\mathbb C}}(a,\infty)
>0.
\]

Since $z_1 \notin K_w(S_\tau)$, the set
\[
\{ \gamma(z_1) : \gamma \in H_w(S_\tau) \}
\]
is not bounded in $\mathbb C$.
Hence, for each $n\in\mathbb N$, there exists
$\gamma_n \in H_w(S_\tau)$ such that
\[
|\gamma_n(z_1)| > n.
\]
It follows that $|\gamma_n(z_1)|\to\infty$, and therefore
\[
\gamma_n(z_1)\to\infty
\quad\text{in } \widehat{\mathbb C}.
\]

Set
\[
C := \tfrac12\, d_{\widehat{\mathbb C}}(\overline{B_0},\infty) > 0.
\]
Choose $\varepsilon>0$ such that $0<\varepsilon<C$ and define
\[
U_\infty
:=
\{ b\in\widehat{\mathbb C}
   : d_{\widehat{\mathbb C}}(b,\infty)<\varepsilon \}.
\]
By the triangle inequality, for every
$a\in\overline{B_0}$ and $b\in U_\infty$,
\[
d_{\widehat{\mathbb C}}(a,b)
\ge
d_{\widehat{\mathbb C}}(a,\infty)
-
d_{\widehat{\mathbb C}}(b,\infty)
\ge
d_{\widehat{\mathbb C}}(\overline{B_0},\infty)
-
\varepsilon
>
C.
\]

Since $\gamma_n(z_1)\to\infty$, we may choose $n$ sufficiently large so that
$\gamma_n(z_1)\in U_\infty$.
Set $\gamma^\ast := \gamma_n$.
Because $\gamma^\ast(z_0)\in B_0\subset\overline{B_0}$,
we obtain
\[
d_{\widehat{\mathbb C}}
\bigl(
\gamma^\ast(z_0),\gamma^\ast(z_1)
\bigr)
>
C.
\]

On the other hand, equicontinuity of $H_w(S_\tau)$ on $U$
implies that there exists $\delta>0$ such that
\[
d_{\widehat{\mathbb C}}(a,b)<\delta
\ \Longrightarrow\
d_{\widehat{\mathbb C}}(\gamma(a),\gamma(b))< C
\quad\text{for all }\gamma\in H_w(S_\tau).
\]
Since $z$ is an accumulation point of both
$U\cap K_w(S_\tau)$ and $U\setminus K_w(S_\tau)$,
we may choose $z_0$ and $z_1$ above so that
$d_{\widehat{\mathbb C}}(z_0,z_1)<\delta$.
Applying the equicontinuity estimate to $\gamma^\ast$ yields
\[
d_{\widehat{\mathbb C}}
\bigl(
\gamma^\ast(z_0),\gamma^\ast(z_1)
\bigr)
<
C,
\]
which contradicts the previous inequality.

Therefore $z\notin F_w(S_\tau)$.
Hence $z\in J_w(S_\tau)$, and the proof is complete.
\end{proof}

\begin{proposition}
\label{prop:char-filledin-escape-compact}
Let $S_\tau$ be a polynomial RSCC.
Assume that $\Gamma_x$ is a compact subset of $\Poly_+$ for each $x\in X$.
Then, for each $w\in W$, the smallest filled-in Julia set
$K_w(S_\tau)$ defined in Definition~\ref{def:smallest-filledin-RSCC} satisfies
\[
K_w(S_\tau)
=
\left\{
z\in\widehat{\mathbb C}
\ : \
T_{\infty,\tilde{\tau}_w}(z)=0
\right\}.
\]
Equivalently,
\[
K_w(S_\tau)
=
\left\{
z\in\widehat{\mathbb{C}}
\ : \
\text{for every }
\xi=(\gamma_n,x_n)_{n\in\mathbb N}\in\Xi_w(S_\tau),\ 
\gamma_{n,1}(z)\not\to\infty
\text{ in }\widehat{\mathbb{C}}
\text{ as } n\to\infty
\right\}.
\]
\end{proposition}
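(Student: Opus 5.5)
We will prove the two inclusions separately. The key tool is a uniform escape radius. Since $X$ is finite and each $\Gamma_x$ is a compact subset of $\Poly_+$, the union $\Gamma:=\bigcup_{x\in X}\Gamma_x$ is compact in $\Poly_+$. By compactness, the degrees of elements of $\Gamma$ are bounded, and convergence in $d_{\Poly}$ forces convergence of degree and coefficients. Hence the leading coefficients are bounded away from $0$ and all coefficients are bounded above. A standard estimate then gives $R>0$ such that $|h(z)|\ge 2|z|$ for all $h\in\Gamma$ and all $|z|\ge R$. Consequently the set $D_R:=\{z\in\mathbb C:|z|>R\}\cup\{\infty\}$ is forward invariant under every $h\in\Gamma$. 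Moreover, for every $z\in D_R$ and every sequence $(\gamma_n)\subset\Gamma$ we have $\gamma_{n,1}(z)\to\infty$. We will establish this escape lemma first.

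\emph{Inclusion $K_w(S_\tau)\subset\{T_{\infty,\tilde\tau_w}=0\}$, and the equivalent description.} Let $z\in K_w(S_\tau)$. Then $z\neq\infty$, since $\gamma(\infty)=\infty$ for all $\gamma\in H_w(S_\tau)$, which is nonempty. For any $\xi=(\gamma_n,x_n)\in\Xi_w(S_\tau)$ we have $(x_n)\in\supp\mathbf P_w$, so every finite prefix $x^{(n)}$ has $\mathbf P_w([x^{(n)}])>0$. Since $X$ is discrete, the cylinder $[x^{(n)}]$ is open, and hence $x^{(n)}\in X_{w,n}$. It follows that $\gamma_{n,1}\in H_w(S_\tau)$. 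Therefore $\{\gamma_{n,1}(z)\}_n$ is bounded and cannot tend to $\infty$. This holds for every $\xi$, so the event defining $T_{\infty,\tilde\tau_w}(z)$ is empty and $T_{\infty,\tilde\tau_w}(z)=0$. The same argument shows that $K_w(S_\tau)$ is contained in the set appearing in the second formula.

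\emph{Inclusion $\{T_{\infty,\tilde\tau_w}=0\}\subset K_w(S_\tau)$.} We argue by contraposition. Suppose $z\notin K_w(S_\tau)$. If $z=\infty$, then $T_{\infty,\tilde\tau_w}(\infty)=1$. Otherwise there exist $n$, a word $x^{(n)}\in X_{w,n}$, and $\gamma_j\in\Gamma_{x_j}$ with $|\gamma_n\circ\cdots\circ\gamma_1(z)|>R$. By continuity of the evaluation map $(h_1,\dots,h_n)\mapsto h_n\circ\cdots\circ h_1(z)$ on $\Poly^n$, there are open neighborhoods $A_j\ni\gamma_j$ in $\Gamma_{x_j}$ such that every composition from $A_1\times\cdots\times A_n$ sends $z$ into $D_R$. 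Because $\gamma_j\in\supp\tau_{x_j}$, each $\tau_{x_j}(A_j)$ is positive. Hence the cylinder $C(A_1,\dots,A_n;x^{(n)})$ has positive $\tilde\tau_w$-measure. On this cylinder, every $\xi$ has $\gamma_{m,1}(z)\to\infty$ by the escape lemma, since all later maps lie in $\Gamma$. Thus $T_{\infty,\tilde\tau_w}(z)>0$.

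The same construction, applied to any single admissible $\xi$ extending the chosen prefix, also proves the reverse inclusion for the second formula. Such a $\xi$ exists because $[x^{(n)}]$ meets $\supp\mathbf P_w$.

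The main obstacle is the uniform escape radius. It requires showing carefully that compactness in the sup-spherical metric $d_{\Poly}$ bounds the coefficients and keeps the leading coefficients away from $0$. We plan to argue by contradiction with a sequence. Uniform convergence on $\widehat{\mathbb C}$ of polynomials to a polynomial forces eventual equality of degrees and convergence of coefficients, which can be seen by examining the behaviour near $\infty$ in the chart $1/z$.
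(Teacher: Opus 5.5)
Your proposal is correct and follows the paper's three-step architecture. The three steps are: a common trapping neighbourhood of $\infty$ for all $h\in\Gamma$ (the paper's Claim~1); the fact that each $\gamma_{n,1}$ along an admissible path lies in $H_w(S_\tau)$, so bounded orbits cannot escape (Claim~2); and a positive-measure cylinder $C(A_1,\dots,A_n;x^{(n)})$ around a composition sending $z$ into the trapping region (Claim~3).

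You differ genuinely only in how the trapping region is produced. The paper takes, for each $g\in\Gamma$, a forward-invariant neighbourhood $U_g$ of the superattracting point $\infty$. It perturbs this to a $d_{\Poly}$-neighbourhood $\mathcal U_g$ of $g$, extracts a finite subcover, and intersects the $U_{g_i}$. It then says escape ``follows from forward invariance''. You instead use compactness to bound degrees and coefficients uniformly, with leading coefficients bounded away from $0$. From this you derive $|h(z)|\ge 2|z|$ for all $h\in\Gamma$ and $|z|\ge R$.

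Your route costs one verification: that $d_{\Poly}$-convergence forces eventual equality of degrees and convergence of coefficients. Hurwitz's theorem in the chart $1/z$ plus Cauchy's formula handles it. In return it gives two things the paper's sketch leaves implicit:
\begin{enumerate}[label=\textup{(\alph*)}]
\item invariance of $U_{g_1}\cap\cdots\cap U_{g_m}$ under every $h\in\Gamma$, which needs compatible (e.g.\ nested and expanding) choices of the $U_{g_i}$;
\item the conclusion $\gamma_{m,1}(z)\to\infty$ itself, which forward invariance alone does not give. For instance, $\widehat{\mathbb C}\setminus\{0\}$ is a forward-invariant neighbourhood of $\infty$ for $z^2$, yet points of the unit circle do not escape.
\end{enumerate}
Your estimate yields $|\gamma_{m,1}(z)|>2^{m-n}R$ directly.

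You also fill small points the paper skips: openness of cylinders in $X^{\mathbb N}$, which gives $x^{(n)}\in X_{w,n}$; taking $A_j$ relatively open in $\Gamma_{x_j}$; and the case $z=\infty$. For the pathwise formula, your explicit admissible extension is fine. Alternatively, write $N_w$ for the pathwise set; then the chain $K_w(S_\tau)\subset N_w\subset\{T_{\infty,\tilde\tau_w}=0\}\subset K_w(S_\tau)$ closes it with no further construction.
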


\begin{proof}
Fix $w\in W$ and set
\[
\Gamma:=\bigcup_{x\in X}\Gamma_x\subset\Poly_+ .
\]
Since $X$ is finite and each $\Gamma_x$ is compact, the set $\Gamma$ is compact.

\medskip
\noindent
\textbf{Claim 1.}
There exists an open neighborhood $U$ of $\infty$ in $\widehat{\mathbb C}$ such
that
\[
h(U)\subset U
\quad\text{for all } h\in\Gamma .
\]
Moreover, if $\gamma_{N,1}(z)\in U$ for some $N\in\mathbb N$, then
$\gamma_{n,1}(z)\to\infty$ in $\widehat{\mathbb C}$ as $n\to\infty$.

\medskip
\noindent
\emph{Proof of Claim 1.}
Since every $g\in\Gamma$ has degree at least $2$, we have $g(\infty)=\infty$ and
$\infty$ is a superattracting fixed point of $g$.
Hence there exists an open neighborhood $U_g$ of $\infty$ such that
$g(U_g)\subset U_g$.

By continuity of the evaluation map $(h,z)\mapsto h(z)$ on
$\Poly\times\widehat{\mathbb C}$, there exists an open neighborhood
$\mathcal U_g$ of $g$ in $\Poly$ such that
$h(U_g)\subset U_g$ for all $h\in\mathcal U_g$.
Since $\Gamma$ is compact, finitely many such neighborhoods
$\mathcal U_{g_1},\ldots,\mathcal U_{g_m}$ cover $\Gamma$.
Setting
\[
U:=U_{g_1}\cap\cdots\cap U_{g_m},
\]
we obtain an open neighborhood of $\infty$ satisfying $h(U)\subset U$ for all
$h\in\Gamma$.
The final assertion follows from the forward invariance of $U$.

\medskip
\noindent
\textbf{Claim 2.}
If $z\in K_w(S_\tau)$, then $T_{\infty,\tilde\tau_w}(z)=0$.

\medskip
\noindent
\emph{Proof of Claim 2.}
Let $z\in K_w(S_\tau)$.
By definition of $K_w(S_\tau)$, the set
$\{\gamma(z):\gamma\in H_w(S_\tau)\}$ is bounded in $\mathbb C$, and hence its
closure in $\widehat{\mathbb C}$ does not contain $\infty$.
For any $\xi=(\gamma_n,x_n)_{n\in\mathbb N}\in\Xi_w(S_\tau)$, each finite
composition $\gamma_{n,1}$ belongs to $H_w(S_\tau)$, so the sequence
$(\gamma_{n,1}(z))_{n\in\mathbb N}$ cannot converge to $\infty$.
Therefore the escape event has $\tilde\tau_w$-measure zero, and
$T_{\infty,\tilde\tau_w}(z)=0$.

\medskip
\noindent
\textbf{Claim 3.}
If $T_{\infty,\tilde\tau_w}(z)=0$, then $z\in K_w(S_\tau)$.

\medskip
\noindent
\emph{Proof of Claim 3.}
Assume that $z\notin K_w(S_\tau)$.
Then the set $\{\gamma(z):\gamma\in H_w(S_\tau)\}$ is unbounded in $\mathbb C$,
and hence its closure in $\widehat{\mathbb C}$ contains $\infty$.
Thus there exists $\gamma\in H_w(S_\tau)$ such that $\gamma(z)\in U$, where $U$
is the neighborhood obtained in Claim~1.

Write $\gamma=\gamma_n\circ\cdots\circ\gamma_1$ with
$x^{(n)}=(x_1,\ldots,x_n)\in X_{w,n}$ and $\gamma_j\in\Gamma_{x_j}$.
By continuity of composition and evaluation in the $d_{\Poly}$-topology, there
exist open neighborhoods $A_j$ of $\gamma_j$ in $\Poly$ such that
\[
(\tilde\gamma_n\circ\cdots\circ\tilde\gamma_1)(z)\in U
\quad\text{for all } \tilde\gamma_j\in A_j .
\]
Since $\gamma_j\in\supp(\tau_{x_j})$, we have $\tau_{x_j}(A_j)>0$, and since
$x^{(n)}\in X_{w,n}$ we have $\mathbf P_w([x_1,\ldots,x_n])>0$.
Hence the corresponding cylinder set
$C(A_1,\ldots,A_n;x^{(n)})\subset\Xi_w(S_\tau)$ satisfies
$\tilde\tau_w(C)>0$.

For any $\xi\in C$, we have $\gamma_{n,1}(z)\in U$, and hence
$\gamma_{m,1}(z)\to\infty$ in $\widehat{\mathbb C}$ as $m\to\infty$ by Claim~1.
Therefore $T_{\infty,\tilde\tau_w}(z)\ge\tilde\tau_w(C)>0$, a contradiction.
This proves $z\in K_w(S_\tau)$.

\medskip
Claims~1--3 together imply
\[
K_w(S_\tau)
=
\{z\in\widehat{\mathbb C}:T_{\infty,\tilde\tau_w}(z)=0\}.
\]
The equivalence with the pathwise characterization follows immediately from
Claim~1.
\end{proof}

\subsection{A Fixed Point Representation via the Transition Operator}
\label{subsec:operator-representation}

In this subsection, we use the statewise escaping probabilities studied above to introduce the function
$\mathbb{T}_{\infty,\tau}$
on the product space \(\mathbb{Y}:=\widehat{\mathbb C}\times W\). We verify that \(\mathbb{T}_{\infty,\tau}\) is a fixed point of the transition operator, and, as an application of the Cooperation Principle, we prove its continuity under suitable assumptions.
The results in this subsection may be regarded as an RSCC analogue of \cite[Proposition~4.24]{MR4002398}.

\begin{definition}
\label{def:escape-product}
We define a function
\[
\mathbb{T}_{\infty,\tau}
\colon \mathbb Y \longrightarrow [0,1]
\]
by
\[
\mathbb{T}_{\infty,\tau}(z,w)
:=
T_{\infty,\tilde{\tau}_w}(z),
\qquad (z,w)\in \mathbb Y.
\]
\end{definition}

\begin{proposition}
\label{prop:approx-escape-Mtau}
Assume that $\Gamma_x$ is a compact subset of $\Poly_{+}$ for each $x\in X$.
Let $U\subset\widehat{\mathbb C}$ be an open neighborhood of $\infty$ such that
\begin{equation}\label{eq:common-U-forward}
h(U)\subset U
\quad\text{for all }h\in\Gamma:=\bigcup_{x\in X}\Gamma_x.
\end{equation}
Let $\phi\in C(\mathbb Y)$ satisfy $\|\phi\|_\infty \le 1$, $\phi(\infty,w)=1$ for all $w\in W$, and $\supp(\phi)\subset U\times W$.
Then the following assertions hold.
\begin{enumerate}[label=\textup{(\roman*)}]
\item
The sequence $\{M_\tau^{n}\phi\}_{n\in\mathbb N}$ converges pointwise on $\mathbb Y$ to $\mathbb T_{\infty,\tau}$.

\item
The function $\mathbb T_{\infty,\tau}$ is a fixed point of $M_\tau$, that is,\ $M_\tau \mathbb T_{\infty,\tau} = \mathbb T_{\infty,\tau}$ on $\mathbb Y$.

\item
If the family $\{M_\tau^{n}\phi\}_{n\in\mathbb N}$ is equicontinuous on $\mathbb Y$, then the convergence in {\rm(i)} is uniform on $\mathbb Y$.
In particular, $\mathbb T_{\infty,\tau}\in C(\mathbb Y)$.
\end{enumerate}
\end{proposition}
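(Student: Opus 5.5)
The plan is to represent $M_\tau^n\phi$ as an expectation over random paths and then pass to the limit pathwise. By induction on $n$, using the Markov property of the RSCC in Theorem~\ref{thm:RSCC-existence}(iii) together with the product structure of $\tilde\tau_w$ on cylinder sets in Definition~\ref{def:Pwtilde}, I will show that
\[
M_\tau^{n}\phi(z,w)=\int_{\Xi_w(S_\tau)}\phi\bigl(\gamma_{n,1}(z),\,w x^{(n)}\bigr)\,d\tilde\tau_w(\xi),
\qquad (z,w)\in\mathbb Y .
\]
For $n=1$ this is just Definition~\ref{def:transition-operator}. For the induction step, write $M_\tau^{n+1}\phi=M_\tau(M_\tau^n\phi)$. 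The disintegration needed is
\[
\tilde\tau_w = \sum_{x_1\in X_{w,1}} P(w,\{x_1\})\,\tau_{x_1}\otimes\tilde\tau_{wx_1}
\]
on cylinders. It follows because $\mathbf P_w([x_1,\dots,x_n])=P(w,\{x_1\})\,\mathbf P_{wx_1}([x_2,\dots,x_n])$, which is immediate from the formula for $P_r$.

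For (i), fix $\xi$ and $(z,w)$. If $\gamma_{n,1}(z)\to\infty$, then $\phi(\gamma_{n,1}(z),wx^{(n)})\to1$. To see this, use continuity of $\phi$ and compactness of $W$: $\phi$ is uniformly continuous, so $\sup_{v}|\phi(y,v)-1|\to0$ as $y\to\infty$. If instead $\gamma_{n,1}(z)\not\to\infty$, then by Claim~1 in the proof of Proposition~\ref{prop:char-filledin-escape-compact} the orbit never enters $U$. Since $\supp\phi\subset U\times W$, the integrand is $0$ for every $n$. The integrand therefore converges pointwise to the indicator of the escape event, and it is bounded by $1$. Dominated convergence then gives $M_\tau^n\phi(z,w)\to\mathbb T_{\infty,\tau}(z,w)$. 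A minor point is measurability of the escape event and of the integrands; I will handle it by writing the escape event as countable unions and intersections of cylinder-measurable sets, using continuity of evaluation.

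For (ii), the main obstacle is that $\mathbb T_{\infty,\tau}$ is only a pointwise limit, so $M_\tau$ must be applied to a bounded measurable function; $M_\tau$ is defined on $B_b(\mathbb Y)$, so this is legitimate. The plan is to note that $M_\tau^{n+1}\phi=M_\tau(M_\tau^n\phi)$. The left side tends to $\mathbb T_{\infty,\tau}$ by (i). For the right side, the sum over $x$ is finite and each inner integral converges by dominated convergence, applied to $\gamma\mapsto M_\tau^n\phi(\gamma(z),wx)\to\mathbb T_{\infty,\tau}(\gamma(z),wx)$ with bound $1$. Hence the right side tends to $M_\tau\mathbb T_{\infty,\tau}$.

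For (iii), the family $\{M_\tau^n\phi\}$ is equicontinuous on the compact space $\mathbb Y$, uniformly bounded by $1$, and pointwise convergent by (i). By the standard Arzel\`a--Ascoli argument, every subsequence has a uniformly convergent further subsequence, whose limit must be $\mathbb T_{\infty,\tau}$. Hence the whole sequence converges uniformly, and the uniform limit of continuous functions is continuous, so $\mathbb T_{\infty,\tau}\in C(\mathbb Y)$.
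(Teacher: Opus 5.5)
\paragraph{A gap in step (i), shared with the paper.}
Your outline is the paper's own: the path-integral formula by induction via the first-step disintegration, dominated convergence for (i), and Arzel\`a--Ascoli for (iii). The trouble is in (i), at the step where you conclude that a non-escaping orbit never enters $U$. Claim~1 in the proof of Proposition~\ref{prop:char-filledin-escape-compact} concerns the particular neighborhood built there. Here $U$ is an arbitrary open neighborhood of $\infty$ that is only forward invariant under $\Gamma$, and forward invariance alone does not force an orbit entering $U$ to tend to $\infty$.

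In fact (i) and (iii) are false as stated. Take $W=\{w_0\}$, $X=\{0\}$, $\tau_0=\delta_f$ with $f(z)=z^2$, $U=\widehat{\mathbb C}$ and $\phi\equiv 1$. All hypotheses hold and $M_\tau^n\phi\equiv 1$ is an equicontinuous family. Yet $\mathbb T_{\infty,\tau}(\cdot,w_0)=\mathbf 1_{\{|z|>1\}}$, which is neither the limit of $M_\tau^n\phi$ nor continuous. A less degenerate variant is $U=\widehat{\mathbb C}\setminus\{0\}$ with $\phi=1$ on $\{|z|\ge 1\}\times W$; then $M_\tau^n\phi=1\neq 0=\mathbb T_{\infty,\tau}$ on the unit circle.

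The paper's proof makes exactly the same leap (``by the forward invariance \ldots\ the orbit cannot enter $U$''). Even in Claim~1 of Proposition~\ref{prop:char-filledin-escape-compact}, the escape assertion needs more than invariance.

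\paragraph{The fix.}
What is missing is a quantitative escape property for $U$, which has to be assumed or arranged. A compact subset of $\Poly_+$ contains polynomials of only finitely many degrees, with bounded coefficients and leading coefficients bounded away from $0$. Hence there is $R>0$ with $|h(z)|\ge 2|z|$ for all $|z|>R$ and all $h\in\Gamma$. For any $U\subset\{|z|>R\}\cup\{\infty\}$, if $\gamma_{N,1}(z)\in U$ then $|\gamma_{n,1}(z)|\ge 2^{\,n-N}R$ for $n\ge N$, so entering $U$ forces escape. With such a $U$ your arguments for (i) and (iii) go through verbatim.

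\paragraph{Part (ii): a different route.}
You pass to the limit in $M_\tau^{n+1}\phi=M_\tau(M_\tau^n\phi)$, using (i) and dominated convergence in the finitely many $\tau_x$-integrals. The paper instead conditions the escape event directly on the first step. Your route is clean, but it inherits the gap through (i). The paper's first-step argument uses neither $\phi$ nor $U$, so (ii) holds with no condition on $U$. You can also rescue your proof of (ii): since (ii) does not involve $\phi$, run your argument with an escape neighborhood $U$ as above and a $\phi$ of your own choosing.
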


\begin{proof}
Fix $(z,w)\in\mathbb Y$.

\medskip
\noindent\textbf{Claim 1.}
For every $n\in\mathbb N$, we have
\begin{equation}\label{eq:Mn-integral-claim}
(M_\tau^{n}\phi)(z,w)
=
\int_{\Xi_w(S_\tau)}
\phi\bigl(\gamma_{n,1}(z),\, w_n\bigr)
\, d\tilde\tau_w(\xi),
\end{equation}
where $\xi=(\gamma_k,x_k)_{k\in\mathbb N}$ and
$w_n:=w x^{(n)}$.

\smallskip
\noindent\emph{Proof of Claim 1.}
We argue by induction on $n$.
For $n=1$, the identity follows directly from
Definition~\ref{def:transition-operator}
and the construction of $\tilde\tau_w$.
Assume that \eqref{eq:Mn-integral-claim} holds for some $n$.
Using the definition of $M_\tau$ and the product structure of
$\tilde\tau_w$, together with Fubini's theorem,
we obtain
\[
(M_\tau^{n+1}\phi)(z,w)
=
\int_{\Xi_w(S_\tau)}
\phi\bigl(\gamma_{n+1,1}(z),\, w_{n+1}\bigr)
\, d\tilde\tau_w(\xi),
\]
which completes the induction.

\medskip
\noindent\textbf{Claim 2.}
We have
\(
\lim_{n\to\infty} (M_\tau^{n}\phi)(z,w)
=
\mathbb T_{\infty,\tau}(z,w).
\)

\smallskip
\noindent\emph{Proof of Claim 2.}
Fix $\xi=(\gamma_k,x_k)_{k\in\mathbb N}\in\Xi_w(S_\tau)$ and set
$z_n:=\gamma_{n,1}(z)$ and $w_n:=w x^{(n)}$.

Suppose that $z_n\to\infty$ in $\widehat{\mathbb C}$.
Since $\phi\in C(\mathbb Y)$ and $\phi(\infty,w)=1$ for all $w\in W$,
for every $\varepsilon>0$ there exists a neighborhood $V_\infty$ of $\infty$
in $\widehat{\mathbb C}$ such that
\[
|\phi(z',w')-1|<\varepsilon
\quad\text{for all } (z',w')\in V_\infty\times W.
\]
Hence $z_n\in V_\infty$ for all sufficiently large $n$, and therefore
$\phi(z_n,w_n)\to 1$.

Next, suppose that $z_n\not\to\infty$.
By the forward invariance \eqref{eq:common-U-forward} and the defining property
of $U$, the orbit cannot enter $U$; hence $z_n\notin U$ for all $n$.
Since $\supp(\phi)\subset U\times W$, it follows that $\phi(z_n,w_n)=0$ for all $n$.

Consequently, for each fixed $\xi\in\Xi_w(S_\tau)$ we have
\[
\lim_{n\to\infty}\phi\bigl(\gamma_{n,1}(z),w_n\bigr)
=
\mathbf 1_{\{\xi\in\Xi_w(S_\tau):\,\gamma_{n,1}(z)\to\infty\}}(\xi).
\]
Moreover,
\[
\bigl|\phi\bigl(\gamma_{n,1}(z),w_n\bigr)\bigr|
\le
\|\phi\|_\infty
\le 1
\quad
\text{for all } n \text{ and all } \xi.
\]
Therefore, by the dominated convergence theorem applied to
\eqref{eq:Mn-integral-claim}, we obtain
\[
\lim_{n\to\infty}(M_\tau^{n}\phi)(z,w)
=
\int_{\Xi_w(S_\tau)}
\mathbf 1_{\{\xi:\,\gamma_{n,1}(z)\to\infty\}}(\xi)
\, d\tilde\tau_w(\xi)
=
\tilde\tau_w\!\left(
\{\xi\in\Xi_w(S_\tau):\,\gamma_{n,1}(z)\to\infty\}
\right).
\]
By definition, the last expression equals
$T_{\infty,\tilde\tau_w}(z)=\mathbb T_{\infty,\tau}(z,w)$.

\medskip
\noindent\textbf{Claim 3.}
We have $M_\tau \mathbb T_{\infty,\tau} = \mathbb T_{\infty,\tau}$ on $\mathbb Y$.

\smallskip
\noindent\emph{Proof of Claim 3.}
Fix $(z,w)\in\mathbb Y$.
By conditioning on the first step under $\tilde\tau_w$ and using the Markov property, we obtain
\[
\mathbb T_{\infty,\tau}(z,w)
=
\sum_{x\in X_{w,1}} P(w,\{x\})
\int_{\Gamma_x}
\mathbb T_{\infty,\tau}(\gamma(z),wx)
\, d\tau_x(\gamma).
\]
The right-hand side is precisely $(M_\tau \mathbb T_{\infty,\tau})(z,w)$.

\medskip
\noindent\textbf{Claim 4.}
If the family $\{M_\tau^{n}\phi\}_{n\in\mathbb N}$ is equicontinuous on $\mathbb Y$,
then the convergence in Claim~2 is uniform on $\mathbb Y$.
In particular, $\mathbb T_{\infty,\tau}\in C(\mathbb Y)$.

\smallskip
\noindent\emph{Proof of Claim 4.}
Since $\mathbb Y$ is compact and
\[
\|M_\tau^{n}\phi\|_\infty
\le
\|\phi\|_\infty
\le 1
\quad\text{for all } n,
\]
the family $\{M_\tau^{n}\phi\}$ is uniformly bounded.
By the Arzelà–Ascoli theorem, every subsequence admits a uniformly
convergent subsubsequence.
By Claim~2, any pointwise limit must coincide with
$\mathbb T_{\infty,\tau}$.
Therefore the full sequence converges uniformly on $\mathbb Y$,
and the limit is continuous as a uniform limit of continuous functions.

\medskip
Claims~1--4 together yield the assertions of the proposition.
\end{proof}

\begin{theorem}
\label{thm:continuity-product-escape}
Assume that
\(
J_{\ker,w}(S_\tau)=\emptyset
\quad\text{for all } w\in W.
\)
Then the function
\[
\mathbb T_{\infty,\tau}\colon \mathbb Y=\widehat{\mathbb C}\times W \longrightarrow [0,1]
\]
is continuous on $\mathbb Y$.
\end{theorem}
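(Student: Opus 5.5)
The plan is to realize $\mathbb T_{\infty,\tau}$ as the limit of $M_\tau^n\phi$ for a suitable test function $\phi$, and to get the equicontinuity needed in Proposition~\ref{prop:approx-escape-Mtau}(iii) from Cooperation Principle~I. Throughout, I work under the hypothesis of Proposition~\ref{prop:approx-escape-Mtau}, namely that each $\Gamma_x$ is a compact subset of $\Poly_+$. This hypothesis is exactly what makes the pointwise limit identification available. It is not stated in the theorem; if it is genuinely not intended to be assumed, the argument below does not cover the theorem as stated and a separate argument would be required.

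\emph{Step 1: the test function.} By Claim~1 in the proof of Proposition~\ref{prop:char-filledin-escape-compact}, there is an open neighborhood $U$ of $\infty$ with $h(U)\subset U$ for all $h\in\Gamma=\bigcup_{x}\Gamma_x$. Choose a closed disc neighborhood $D\subset U$ of $\infty$. Then pick a continuous $\psi\colon\widehat{\mathbb C}\to[0,1]$ with $\psi(\infty)=1$ and $\supp\psi\subset D$; this exists by Urysohn's lemma. Set $\phi(z,w):=\psi(z)$. Then $\phi\in C(\mathbb Y)$, $\|\phi\|_\infty\le 1$, $\phi(\infty,w)=1$ for all $w$, and $\supp\phi\subset U\times W$, so Proposition~\ref{prop:approx-escape-Mtau} applies.

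\emph{Step 2: equicontinuity at every point.} Under $J_{\ker,w}(S_\tau)=\emptyset$ for all $w$, Theorem~\ref{thm:CP-Riemann}(i) gives $F_{\mathrm{meas}}(M_\tau^*)=\mathfrak M_1(\mathbb Y)$. Lemma~\ref{lem:Fmeas-Fpt-equivalence} then yields $F^0_{\mathrm{pt}}(M_\tau^*)=\mathbb Y$. Lemma~\ref{lem:pt-equicontinuity-Mtau} in turn shows that, for our $\phi$, the family $\{M_\tau^n\phi\}_{n\ge0}$ is equicontinuous at every point of $\mathbb Y$.

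\emph{Step 3: from pointwise to uniform equicontinuity.} Since $\mathbb Y$ is a compact metric space, equicontinuity at every point upgrades to uniform equicontinuity on $\mathbb Y$ by the standard Lebesgue-number or finite-subcover argument. Note that the Arzel\`a--Ascoli argument in Claim~4 of Proposition~\ref{prop:approx-escape-Mtau} only needs this together with the uniform bound $\|M_\tau^n\phi\|_\infty\le1$. Proposition~\ref{prop:approx-escape-Mtau}(iii) then gives uniform convergence $M_\tau^n\phi\to\mathbb T_{\infty,\tau}$, hence $\mathbb T_{\infty,\tau}\in C(\mathbb Y)$.

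The main obstacle is really bookkeeping rather than analysis: confirming that the compactness-in-$\Poly_+$ hypothesis is available. It is needed to have the forward-invariant $U$, and hence the identification of the pointwise limit in Proposition~\ref{prop:approx-escape-Mtau}(i). Once that is in place, the remaining steps are direct applications of the cited lemmas.
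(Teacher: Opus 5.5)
Your proof is correct and follows the paper's argument. You use the same test function from Proposition~\ref{prop:approx-escape-Mtau} and the same chain Theorem~\ref{thm:CP-Riemann}(i) $\Rightarrow$ Lemma~\ref{lem:Fmeas-Fpt-equivalence} $\Rightarrow$ Lemma~\ref{lem:pt-equicontinuity-Mtau}. The only difference is at the end: the paper lets $n\to\infty$ directly in the equicontinuity-at-a-point estimate $|f_n(z',w')-f_n(z,w)|<\varepsilon$, whereas you upgrade to uniform equicontinuity on the compact space $\mathbb Y$ and invoke Arzel\`a--Ascoli, which also gives uniform convergence of $M_\tau^n\phi$.

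Your caveat is accurate. The paper's proof also relies on Proposition~\ref{prop:approx-escape-Mtau}, so it tacitly assumes that each $\Gamma_x$ is a compact subset of $\Poly_{+}$, and this hypothesis is missing from the theorem's statement.
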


\begin{proof}
Choose $\phi\in C(\mathbb Y)$ as in Proposition~\ref{prop:approx-escape-Mtau}
(with respect to an open neighborhood $U$ of $\infty$ satisfying
\eqref{eq:common-U-forward}).
Set
\[
f_n := M_\tau^{n}\phi \in C(\mathbb Y),
\qquad n\ge 0.
\]
By Proposition~\ref{prop:approx-escape-Mtau}{\rm(i)}, we have pointwise convergence
\[
f_n(z,w)\longrightarrow \mathbb T_{\infty,\tau}(z,w)
\quad \text{for all }(z,w)\in\mathbb Y.
\]

By Theorem~\ref{thm:CP-Riemann}{\rm(i)}, we have
\[
F_{\mathrm{meas}}(M_\tau^{*})=\mathfrak{M}_1(\mathbb Y).
\]
Hence Lemma~\ref{lem:Fmeas-Fpt-equivalence} yields
\[
F_{\mathrm{pt}}^{0}(M_\tau^{*})=\mathbb Y.
\]
Applying Lemma~\ref{lem:pt-equicontinuity-Mtau} to the test function $\phi$,
we conclude that for every $(z,w)\in\mathbb Y$ the family $\{f_n\}_{n\ge 0}$
is equicontinuous at $(z,w)$.

Now fix $(z,w)\in\mathbb Y$ and let $\varepsilon>0$.
By equicontinuity at $(z,w)$, there exists $\delta>0$ such that
\[
d_{\mathbb Y}\bigl((z,w),(z',w')\bigr)<\delta
\ \Longrightarrow\
|f_n(z',w')-f_n(z,w)|<\varepsilon
\quad\text{for all }n\ge 0.
\]
Letting $n\to\infty$ and using the pointwise convergence $f_n\to \mathbb
T_{\infty,\tau}$, we obtain
\[
d_{\mathbb Y}\bigl((z,w),(z',w')\bigr)<\delta
\ \Longrightarrow\
\bigl|\mathbb T_{\infty,\tau}(z',w')-\mathbb T_{\infty,\tau}(z,w)\bigr|
\le \varepsilon.
\]
Thus $\mathbb T_{\infty,\tau}$ is continuous at every point of $\mathbb Y$, hence
\(\mathbb T_{\infty,\tau}\in C(\mathbb Y)\).
\end{proof}

As an immediate consequence of Theorem~\ref{thm:continuity-product-escape},
we obtain the following corollary.

\begin{corollary}
\label{cor:continuity-statewise-escape}
Assume that
\(
J_{\ker,w}(S_\tau)=\emptyset
\quad\text{for all } w\in W.
\)
Fix $w_0\in W$.
Then the statewise escaping probability function
\[
T_{\infty,\tilde\tau_{w_0}}\colon \widehat{\mathbb C}\to[0,1]
\]
is continuous on $\widehat{\mathbb C}$.
\end{corollary}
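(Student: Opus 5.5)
The corollary is a restriction argument built directly on Theorem~\ref{thm:continuity-product-escape}. Under the standing hypothesis $J_{\ker,w}(S_\tau)=\emptyset$ for all $w\in W$, that theorem says $\mathbb T_{\infty,\tau}$ is continuous on $\mathbb Y=\widehat{\mathbb C}\times W$ with the product topology. Fix $w_0\in W$. The inclusion map
\[
\iota_{w_0}\colon\widehat{\mathbb C}\to\mathbb Y,\qquad \iota_{w_0}(z):=(z,w_0),
\]
is continuous. Indeed, with the metric $d_{\mathbb Y}((z,w),(z',w'))=d_{\widehat{\mathbb C}}(z,z')+d_W(w,w')$ from Notation~\ref{not:product-space-basic}, it is an isometric embedding:
\[
d_{\mathbb Y}\bigl(\iota_{w_0}(z),\iota_{w_0}(z')\bigr)=d_{\widehat{\mathbb C}}(z,z').
\]

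By Definition~\ref{def:escape-product},
\[
T_{\infty,\tilde\tau_{w_0}}=\mathbb T_{\infty,\tau}\circ\iota_{w_0}.
\]
This is a composition of continuous maps, so it is continuous on $\widehat{\mathbb C}$.

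If one wants an $\varepsilon$--$\delta$ form, it comes straight out of the proof of Theorem~\ref{thm:continuity-product-escape}. For $z\in\widehat{\mathbb C}$ and $\varepsilon>0$, take the $\delta$ given there at the point $(z,w_0)$. Any $z'$ with $d_{\widehat{\mathbb C}}(z,z')<\delta$ satisfies $d_{\mathbb Y}\bigl((z,w_0),(z',w_0)\bigr)<\delta$, and therefore
\[
\bigl|T_{\infty,\tilde\tau_{w_0}}(z')-T_{\infty,\tilde\tau_{w_0}}(z)\bigr|\le\varepsilon.
\]

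There is no real obstacle. The substantive work, namely the equicontinuity of $\{M_\tau^n\phi\}$ supplied by Cooperation Principle~I (Theorem~\ref{thm:CP-Riemann}) together with Lemmas~\ref{lem:Fmeas-Fpt-equivalence} and~\ref{lem:pt-equicontinuity-Mtau}, is already done in Theorem~\ref{thm:continuity-product-escape}. The only point to check is that the kernel hypothesis is needed for all $w\in W$, not just $w_0$. This matches the statement as written, since the operator $M_\tau$ mixes states through the update map $u$.
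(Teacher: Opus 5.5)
Your proposal is correct and matches the paper's argument: the paper states this corollary as an immediate consequence of Theorem~\ref{thm:continuity-product-escape}, obtained by restricting the continuous function $\mathbb T_{\infty,\tau}$ on $\widehat{\mathbb C}\times W$ to the slice $\widehat{\mathbb C}\times\{w_0\}$. Your isometric-embedding remark and the $\varepsilon$--$\delta$ restatement are correct but not needed.
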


\section{Stationary-Averaged Escaping Behavior}
\label{sec:stationary-averaged-escaping}

In this section, we study the escaping probability from the viewpoint of stationary averaging with respect to stationary distributions of the induced state chain. We introduce the stationary-averaged escaping probability, prove its continuity under suitable assumptions, and analyze the possible values obtained by varying the stationary distribution.

\subsection{Stationary-Averaged Escaping Probability}

\begin{definition}\label{def:stationary-distribution-RSCC}
As in Notation~\ref{not:Q-Markov-kernel}, let $Q$ be the transition probability function on $(W,\mathcal W)$ associated
with the RSCC, and let
\[
\V\mu(\cdot)
:=
\int_W \mu(dw)\, Q(w,\cdot)
\]
be the dual operator acting on $ba(W,\mathcal W)$.
A probability measure $\pi\in\mathfrak M_1(W)$ is called a
\emph{stationary distribution} for the RSCC if
$\V\pi=\pi$,
that is,
\[
\pi(A)
=
\int_W Q(w,A)\,\pi(dw)
\quad
\text{for all } A\in\mathcal W.
\]
We denote by $\Stat$ the set of all stationary distributions, that is,
\[
\Stat
:=
\bigl\{
\pi\in\mathfrak M_1(W)
\ :\ 
\V\pi=\pi
\bigr\}.
\]
\end{definition}

\begin{remark}
If the state space $W$ is finite, then a stationary distribution always exists.
Moreover, if the associated Markov chain is irreducible, the stationary
distribution is unique.
In contrast, for a general Markov chain on an infinite state space, a stationary
distribution need not exist, and even when it exists, it may fail to be unique.

On the other hand, under Assumption~\ref{ass:standing-cooperation}, namely in the setting of
continuous Markov chains considered in this paper,
it is known that at least one stationary distribution exists.
We shall discuss this in more detail below.
\end{remark}

\begin{definition}
[{\cite[Definition~3.2.5]{MR1070097}}]
A set $E\in\mathcal W$ is said to be \emph{stochastically closed} if
\[
Q(w,E)=1
\quad\text{for all } w\in E.
\]

A set $E\in\mathcal W$ is called an \emph{ergodic kernel} if it is
stochastically closed and contains no proper subset with the same property,
that is, if there is no proper measurable subset
$F\subsetneq E$ such that $F$ is stochastically closed.
\end{definition}

\begin{notation}
Let $C(W)$ denote the Banach space of all bounded continuous
complex-valued functions on $W$, equipped with the supremum norm.
By \cite[Lemma~3.1.30]{MR1070097}, we have
\[
\U(C(W)) \subset C(W),
\]
and $1$ is an eigenvalue of the operator
\[
\U : C(W)\to C(W).
\]
We denote by $\mathbf{E}(1)$ the eigenspace of $\U$ corresponding to the eigenvalue $1$.
\end{notation}

\begin{theorem}
[{\cite[Section~3.2]{MR1070097}}]
There exists a transition probability function $Q^{\infty}$ on
$(W,\mathcal W)$ such that
\[
\lim_{n\to\infty}
\sup_{\substack{w\in W\\ A\in\mathcal W}}
\left|
\frac{1}{n}\sum_{k=1}^{n} Q^{k}(w,A)
-
Q^{\infty}(w,A)
\right|
=0.
\]

Moreover, there exist a finite number of ergodic kernels
$E_1,\ldots,E_l$, probability measures
$\pi_1,\ldots,\pi_l$ on $(W,\mathcal W)$,
and $\mathcal W$--measurable nonnegative real-valued functions $g_1,\ldots,g_l$ such that
\begin{enumerate}[label=\textup{(\roman*)}]
\item
$l=\dim \mathbf{E}(1)$;

\item
\(
Q^{\infty}(w,A)
=
\sum_{j=1}^{l} g_j(w)\,\pi_j(A)
\quad
\text{for all } w\in W,\ A\in\mathcal W;
\)

\item
$\pi_1,\ldots,\pi_l$ are stationary distributions of the chain.
Moreover,
\[
\{\pi_1,\ldots,\pi_l\}
\text{ forms a basis of }
\{\mu\in ca(W,\mathcal W)\mid \V\mu=\mu\};
\]

\item
$\{g_1,\ldots,g_l\}$ forms a basis of $\mathbf{E}(1)$.
Moreover, for every $w\in W$,
\[
\sum_{j=1}^{l} g_j(w)=1.
\]
\end{enumerate}
\end{theorem}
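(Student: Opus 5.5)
The plan is to treat this as a uniform mean ergodic theorem for a quasi-compact Markov operator, in the spirit of Yosida--Kakutani and Doeblin--Fortet, and then to extract the finite decomposition from the spectral projection. I read continuity of $Q$ in Definition~\ref{def:continuous-Markov-chain} as continuity of $w\mapsto Q(w,\cdot)$ in total variation, as in \cite[Chapter~3]{MR1070097}. This reading is essential: the uniformity in $A\in\mathcal W$ would fail if $Q$ were only weakly (Feller) continuous.

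\textbf{Step 1 (quasi-compactness).} For $f\in B_b(W,\mathcal W)$ with $\|f\|_\infty\le1$ we have
\[
|\U f(w)-\U f(w')|\le \|Q(w,\cdot)-Q(w',\cdot)\|_{TV}.
\]
Since $W$ is compact, $w\mapsto Q(w,\cdot)$ is uniformly continuous in total variation. Hence $\U$ maps the unit ball of $B_b(W,\mathcal W)$ into a uniformly bounded, uniformly equicontinuous subset of $C(W)$. By Arzel\`a--Ascoli, $\U$ is a compact operator on $B_b(W,\mathcal W)$, and likewise on $C(W)$, which recovers \cite[Lemma~3.1.30]{MR1070097}. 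Since $\|\U^n\|=1$ for all $n$, the Yosida--Kakutani uniform ergodic theorem applies to $\U$. It gives that $\frac1n\sum_{k=1}^n\U^k$ converges in operator norm to a bounded projection $\Pi$ onto $\ker(\U-I)$, and this kernel is finite dimensional because $\U$ is compact. Defining $Q^\infty(w,A):=(\Pi\mathbf 1_A)(w)$, operator-norm convergence tested on indicators is exactly the displayed uniform limit. Positivity and $\Pi\mathbf 1=\mathbf 1$ make $Q^\infty$ a transition probability; countable additivity in $A$ follows from the uniform convergence. Every eigenfunction lies in $\U(B_b)\subset C(W)$, so $\ker(\U-I)=\mathbf E(1)$ and we set $l:=\dim\mathbf E(1)$.

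\textbf{Step 2 (structure of the limit).} From $\U\Pi=\Pi\U=\Pi$, in kernel form $QQ^\infty=Q^\infty Q=Q^\infty$, two facts follow. Each measure $Q^\infty(w,\cdot)$ is stationary, and the dual projection $\V^\infty$ maps $ca(W,\mathcal W)$ onto $\{\mu:\V\mu=\mu\}$. By duality with the finite-rank $\Pi$, this fixed space also has dimension $l$. The set of stationary probability measures is then a finite-dimensional compact convex set. I would show it is a simplex with exactly $l$ extreme points $\pi_1,\dots,\pi_l$, using that distinct ergodic stationary measures are mutually singular. Each $\pi_j$ is carried by a minimal stochastically closed set. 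Take $E_j$ to be the set of $w$ with $Q^\infty(w,\cdot)=\pi_j$, intersected with a suitable support set. Showing $E_j$ is stochastically closed and minimal gives the ergodic kernels. Writing $Q^\infty(w,\cdot)$ as a convex combination of the $\pi_j$ defines $g_j(w)\ge0$ with $\sum_jg_j=1$; concretely $g_j(w)=Q^\infty(w,E_j)$. Each $g_j=\Pi\mathbf 1_{E_j}$ lies in $\mathbf E(1)$, and $g_i=\delta_{ij}$ on $E_j$ gives linear independence, hence a basis of $\mathbf E(1)$. Applying $\V^\infty$ shows the $\pi_j$ span the fixed measures. Together this yields (i)--(iv).

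\textbf{Main obstacle.} I expect the hard part to be Step~2, namely rigorously matching the extreme stationary measures with minimal stochastically closed sets. One must check that $E_j$ is measurable, that it is closed under $Q$, and that it contains no proper stochastically closed subset. I would attack this by showing that for any stochastically closed $F$, the function $w\mapsto Q^\infty(w,F)$ is a harmonic function in $\mathbf E(1)$ equal to $1$ on $F$. Finite dimensionality then caps the number of disjoint such sets at $l$, which forces minimal ones to exist and to be exactly $l$ in number.
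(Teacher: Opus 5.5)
The paper gives no proof of this statement; it only cites \cite[Section~3.2]{MR1070097}. Your Step~1 is sound under your reading. If $w\mapsto Q(w,\cdot)$ is continuous in total variation on a compact $W$, then $\U$ is compact on $B_b(W,\mathcal W)$. Yosida--Kakutani then gives the operator-norm Ces\`aro limit, a countably additive $Q^\infty$, and a finite-dimensional $\ker(\U-I)=\mathbf E(1)$.

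The problem is that this is not the paper's notion of continuity, and under the paper's notion your own caveat makes the statement false. Under FDIS, $Q(w,\cdot)=\sum_x P(w,\{x\})\,\delta_{u(w,x)}$ is purely atomic. Total-variation continuity then forces every atom of $Q(w,\cdot)$ to remain an atom of $Q(w',\cdot)$ for $w'$ near $w$. None of the paper's examples has this property: in Example~\ref{ex:two-regimes-simplex}, $\|Q(p,\cdot)-Q(p',\cdot)\|_{\mathrm{TV}}=2$ whenever $p\neq p'$ are close. Yet the paper asserts these examples satisfy Assumption~\ref{ass:standing-cooperation}, so there ``continuous'' means weakly (Feller) continuous.

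With that meaning, take $X=\{0\}$, $W=[0,1]$ and $u(w,0)=w/2$. Then $Q(w,\cdot)=\delta_{w/2}$, the only stationary law is $\delta_0$, and $l=1$. A Ces\`aro limit uniform in $A$ forces $Q^\infty(w,\cdot)Q=Q^\infty(w,\cdot)$, hence $Q^\infty(w,\cdot)=\delta_0$. But $Q^k(w,\{0\})=0$ for every $w>0$ and every $k$, a contradiction. Example~\ref{ex:two-regimes-simplex} with $\alpha\in(0,1)$ fails in the same way: $(0,1)$ is stochastically closed, while every element of $\Stat$ is carried by $\{0,1\}$.

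So the statement cannot be derived from the standing assumptions. Uniformity over all $A\in\mathcal W$ needs an extra Doeblin-type hypothesis, such as quasi-compactness of $\U$ on $B_b(W,\mathcal W)$. Your argument proves a strong Feller special case that excludes every model in the paper.

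Separately, Step~2 has a genuine gap at the point you flagged. Bounding the number of pairwise disjoint stochastically closed sets by $l$ does not make inclusion-minimal ones exist. Take $Q(w,\cdot)=m$, Lebesgue measure on $[0,1]$. It is not of FDIS form, but it is total-variation continuous with $l=1$. Every Borel set of full measure is stochastically closed, and deleting a point preserves this, so no nonempty stochastically closed set is minimal.

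What your count does give is Doob's notion: a stochastically closed set containing no two disjoint stochastically closed subsets. A clean choice is $D_j:=\{g_j=1\}$, with $g_j$ your convex-combination coefficients.
\begin{itemize}
\item $D_j$ is stochastically closed, because $g_j\le 1$ and $\U g_j=g_j$.
\item Writing $\pi_j=\int Q^\infty(w,\cdot)\,\pi_j(dw)$ and using linear independence of the $\pi_i$ gives $\int g_j\,d\pi_j=1$, hence $\pi_j(D_j)=1$.
\item Any stochastically closed $F\subset D_j$ has $\pi_j(F)=Q^\infty(w,F)=1$ for $w\in F$, so two such sets cannot be disjoint.
\end{itemize}
This choice also settles measurability, makes the $D_j$ pairwise disjoint, and removes the need for your ``support set''.

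Literal minimality in the FDIS case needs a different idea. There each $\pi_j$ is a total-variation limit of finitely supported measures, hence atomic. The closed communicating class of an atom of $\pi_j$ is then a minimal stochastically closed set.
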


\begin{lemma}[Structure of the set of stationary distributions]
\label{lem:structure-S}
We have the following:
\begin{enumerate}[label=\textup{(\roman*)}]
\item
The set $\Stat$ of stationary distributions is a nonempty
finite-dimensional simplex.
More precisely, there exist stationary distributions
$\pi_1,\ldots,\pi_l$ such that
\[
\Stat
=
\left\{
\sum_{j=1}^l \alpha_j \pi_j
\ : \
\alpha_j\ge0,\ \sum_{j=1}^l \alpha_j=1
\right\},
\]
where $l=\dim \mathbf{E}(1)$.

\item
The extreme points of $\Stat$ are precisely
$\pi_1,\ldots,\pi_l$.

\item
Each $\pi_j$ is ergodic.
More precisely, if $A\in\mathcal W$ satisfies
\[
Q(w,A)=1 \quad \text{for all } w\in A,
\]
then
\[
\pi_j(A)\in\{0,1\}.
\]

\item
Each $\pi_j$ is supported on a corresponding ergodic kernel $E_j$,
and the ergodic kernels $E_1,\ldots,E_l$ are pairwise disjoint up to
$\pi_j$-null sets.
\end{enumerate}

Here, a point $\pi\in\Stat$ is called an \emph{extreme point}
if it cannot be written as a nontrivial convex combination of two
distinct elements of $\Stat$; that is, if
\[
\pi = t\mu_1 + (1-t)\mu_2,
\quad 0<t<1,\ \mu_1,\mu_2\in\Stat,
\]
then $\mu_1=\mu_2=\pi$.
\end{lemma}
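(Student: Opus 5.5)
Every assertion of Lemma~\ref{lem:structure-S} will be read off from the structure theorem of \cite[Section~3.2]{MR1070097} quoted just above, together with the uniform convergence of Ces\`aro averages of $Q^k$ to $Q^\infty$. I will take the stationary distributions $\pi_1,\ldots,\pi_l$ and ergodic kernels $E_1,\ldots,E_l$ supplied by that theorem.

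For (i), the inclusion ``$\supseteq$'' is immediate, because $\Stat$ is convex: $\V$ is linear and preserves positivity and total mass, so convex combinations of the $\pi_j$ are again stationary. For ``$\subseteq$'', take $\pi\in\Stat$. Since $\V\pi=\pi$, we also have $\V^k\pi=\pi$ for every $k$. Averaging these identities and passing to the limit with the uniform Ces\`aro convergence gives
\[
\pi(A)=\int_W Q^\infty(w,A)\,\pi(dw)=\sum_{j=1}^l\Bigl(\int_W g_j\,d\pi\Bigr)\pi_j(A).
\]
Set $\alpha_j:=\int g_j\,d\pi$. Then $\alpha_j\ge0$ because $g_j\ge0$, and $\sum_j\alpha_j=1$ because $\sum_j g_j\equiv1$. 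Nonemptiness follows from $l=\dim\mathbf E(1)\ge1$, since $1$ is an eigenvalue of $\U$. Finally, $\Stat$ is a genuine $(l-1)$-simplex because the $\pi_j$ are linearly independent (they form a basis by (iii) of the cited theorem). So the representation $\pi=\sum_j\alpha_j\pi_j$ is unique, and the vertices are exactly the $\pi_j$. This gives (ii): in a simplex with affinely independent vertices, the extreme points are the vertices.

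For (iii) and (iv), I will use that each $\pi_j$ is carried by the ergodic kernel $E_j$, that is, $\pi_j(E_j)=1$, and that the $E_j$ are pairwise disjoint. If the cited theorem does not state these explicitly, I will use the standard construction in \cite[Section~3.2]{MR1070097}, where $\pi_j$ is the unique stationary measure concentrated on the minimal stochastically closed set $E_j$; this is the main point requiring care.

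Now let $A$ be stochastically closed. I will argue that $\pi_j(A)\in\{0,1\}$ using the extremality from (ii). If $0<\pi_j(A)<1$, then I will show that the normalized restrictions $\pi_j(\cdot\cap A)/\pi_j(A)$ and $\pi_j(\cdot\cap A^c)/\pi_j(A^c)$ are both stationary. This would write $\pi_j$ as a nontrivial convex combination of two distinct stationary measures, contradicting (ii).

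The stationarity check runs as follows.
\begin{itemize}
\item Because $Q(w,A)=1$ on $A$, we get $\int_A Q(w,B)\,\pi_j(dw)\le\pi_j(B\cap A)+\pi_j(B\setminus A)\cdot 0$, with the second term vanishing since $Q(w,A^c)=0$ for $w\in A$.
\item Comparing this with $\pi_j(A)=\int Q(w,A)\,\pi_j(dw)\ge\pi_j(A)$ forces $Q(w,A)=0$ for $\pi_j$-a.e.\ $w\in A^c$.
\item With both facts in hand, the restricted measures are invariant.
\end{itemize}

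The pairwise disjointness in (iv) up to $\pi_j$-null sets then follows from minimality of the kernels. The intersection of two stochastically closed sets is stochastically closed, so $E_i\cap E_k$ is either empty or equal to both $E_i$ and $E_k$, and distinct indices give distinct kernels.

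The main obstacle is extracting the support statement $\pi_j(E_j)=1$ from the reference rather than from the quoted summary. I would first look for it directly in Iosifescu--Grigorescu's Theorem~3.2.x on the decomposition of $Q^\infty$.
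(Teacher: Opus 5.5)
The paper gives no proof of this lemma. It is stated right after the quoted Iosifescu--Grigorescu structure theorem and is meant to be read off from it, so your overall route is the paper's. Your derivations of (i)--(iii) from that theorem are correct and more explicit than anything in the paper:
\begin{enumerate}[label=\textup{(\alph*)}]
\item Ces\`aro-averaging $\V^k\pi=\pi$ gives $\pi=\sum_j(\int g_j\,d\pi)\,\pi_j$.
\item Linear independence of the $\pi_j$ gives the simplex and its vertices.
\item The restriction-plus-extremality argument gives ergodicity.
\end{enumerate}
Be aware, though, that your argument and the paper's both rest on the quoted theorem, which does not follow from Assumption~\ref{ass:standing-cooperation} alone. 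In Example~\ref{ex:two-regimes-simplex} with $\alpha=0$, the state chain is the identity, $\U$ is the identity on $C(W)$, and every Borel probability measure on $[0,1]$ is stationary. So (i) fails there, and the lemma, together with your proof of it, implicitly requires the hypotheses under which the cited structure theorem actually holds.

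Within that framework, the one step you do not prove is the support statement $\pi_j(E_j)=1$ in (iv). As you note, the quoted theorem says nothing linking $\pi_j$ to $E_j$, and pointing to ``the standard construction'' does not establish it. In the present finite-index setting you can extract it from the quoted theorem itself, as follows.

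Since $Q(w,\cdot)=\sum_{x\in X}P(w,\{x\})\,\delta_{u(w,x)}$ has at most $|X|$ atoms, each $Q^k(w,\cdot)$ is carried by the countable set $R(w):=\{wx^{(k)}:k\ge1,\ x^{(k)}\in X^k\}$. The uniform Ces\`aro convergence then gives $Q^\infty(w,W\setminus R(w))=0$.

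Apply your identity from (i) to $\pi=\pi_i$ and use linear independence: this gives $\int g_j\,d\pi_i=\delta_{ij}$. Since $0\le g_i\le 1$, this forces $g_i=1$ $\pi_i$-a.e. For any $w$ with $g_i(w)=1$ you get $\pi_i=Q^\infty(w,\cdot)$, so $\pi_i$ is purely atomic. Let $E_i$ be its set of atoms. Then:
\begin{enumerate}[label=\textup{(\alph*)}]
\item $E_i$ is stochastically closed. If $w\in E_i$ and $P(w,\{x\})>0$, stationarity gives $\pi_i(\{u(w,x)\})\ge P(w,\{x\})\,\pi_i(\{w\})>0$, so $Q(w,E_i)=1$.
\item $E_i$ is an ergodic kernel. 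By (iii), any nonempty stochastically closed $F\subseteq E_i$ has $\pi_i(F)=1$, hence $F=E_i$ because every point of $E_i$ is an atom. Minimality has to be read among nonempty sets, since $\emptyset$ is vacuously stochastically closed.
\item The kernels are pairwise disjoint. Since $g_i=1$ at every atom of $\pi_i$ and $\sum_j g_j\equiv1$, we get $E_i\cap E_k=\emptyset$ for $i\neq k$.
\end{enumerate}
This last step also replaces your assertion that distinct indices give distinct kernels, which as written would need a separate uniqueness argument.
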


\begin{definition}[Stationary-Averaged escaping probability]
\label{def:stationary-averaged-escape}
Let $\pi\in\Stat$ be a stationary distribution.
We define the \emph{stationary-averaged escaping probability}
associated with $\pi$ by
\[
T_{\infty,\tau,\pi}(z)
:=
\int_W T_{\infty,\tilde{\tau}_w}(z)\,\pi(dw),
\qquad z\in\widehat{\mathbb C}.
\]
\end{definition}

As an immediate consequence of Theorem~\ref{thm:continuity-product-escape},
we obtain continuity of the stationary-averaged escaping probability.

\begin{theorem}
\label{thm:continuity-stationary-averaged}
Assume that
\(
J_{\ker,w}(S_\tau)=\emptyset
\quad\text{for all } w\in W.
\)
Let $\pi\in\Stat$ be a stationary distribution.
Then the stationary-averaged escaping probability
$T_{\infty,\tau,\pi}$
is continuous on $\widehat{\mathbb C}$.
\end{theorem}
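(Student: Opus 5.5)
The plan is to deduce the statement directly from Theorem~\ref{thm:continuity-product-escape}, using compactness of $W$ and dominated convergence. Under the assumption $J_{\ker,w}(S_\tau)=\emptyset$ for all $w\in W$, that theorem gives $\mathbb T_{\infty,\tau}\in C(\mathbb Y)$. Since $\mathbb Y=\widehat{\mathbb C}\times W$ is compact, $\mathbb T_{\infty,\tau}$ is in fact uniformly continuous. Two facts then follow. First, for each $z$, the section $w\mapsto T_{\infty,\tilde\tau_w}(z)=\mathbb T_{\infty,\tau}(z,w)$ is continuous, hence Borel measurable and bounded by $1$. So the integral defining $T_{\infty,\tau,\pi}(z)$ in Definition~\ref{def:stationary-averaged-escape} makes sense for every $\pi\in\Stat$, and in particular for any Borel probability measure on $W$.

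Next I fix $z_0\in\widehat{\mathbb C}$ and $\varepsilon>0$. Uniform continuity gives $\delta>0$ such that $d_{\widehat{\mathbb C}}(z,z_0)<\delta$ implies
\[
\sup_{w\in W}\bigl|\mathbb T_{\infty,\tau}(z,w)-\mathbb T_{\infty,\tau}(z_0,w)\bigr|<\varepsilon .
\]
Integrating this bound against the probability measure $\pi$ yields $|T_{\infty,\tau,\pi}(z)-T_{\infty,\tau,\pi}(z_0)|\le\varepsilon$, which is continuity at $z_0$.

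If I prefer not to pass through uniform continuity, I would instead take a sequence $z_k\to z_0$. Continuity of $\mathbb T_{\infty,\tau}$ gives pointwise convergence $\mathbb T_{\infty,\tau}(z_k,w)\to\mathbb T_{\infty,\tau}(z_0,w)$ for every $w$. The dominating function is the constant $1$, which is $\pi$-integrable, so dominated convergence gives the same conclusion.

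There is no real obstacle here. The only point needing care is measurability of the integrand, and this is supplied by the joint continuity from Theorem~\ref{thm:continuity-product-escape}. Pointwise-in-$w$ continuity in $z$ alone would already suffice via dominated convergence, provided measurability in $w$ is known.
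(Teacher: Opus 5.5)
Your proposal is correct and follows essentially the same route as the paper. Both invoke Theorem~\ref{thm:continuity-product-escape} to get $\mathbb T_{\infty,\tau}\in C(\mathbb Y)$ and then pass to the $\pi$-average by dominated convergence with the constant $1$ as dominating function; the paper also notes uniform continuity, which your first variant uses directly, and your remark that joint continuity gives measurability of $w\mapsto T_{\infty,\tilde\tau_w}(z)$ makes explicit a point the paper leaves implicit.
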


\begin{proof}
By Theorem~\ref{thm:continuity-product-escape}, we have
\[
\mathbb T_{\infty,\tau}\in C(\widehat{\mathbb C}\times W).
\]
Since $\widehat{\mathbb C}\times W$ is compact, the function
$\mathbb T_{\infty,\tau}$ is uniformly continuous and bounded.

Let $(z_n)$ be a sequence in $\widehat{\mathbb C}$ with
$z_n\to z$.
Then for every $w\in W$,
\[
\mathbb T_{\infty,\tau}(z_n,w)
\longrightarrow
\mathbb T_{\infty,\tau}(z,w),
\]
and moreover
\[
\bigl|
\mathbb T_{\infty,\tau}(z_n,w)
\bigr|
\le 1
\quad
\text{for all } n \text{ and all } w.
\]
Hence, by the dominated convergence theorem,
\[
T_{\infty,\tau,\pi}(z_n)
=
\int_W \mathbb T_{\infty,\tau}(z_n,w)\,\pi(dw)
\longrightarrow
\int_W \mathbb T_{\infty,\tau}(z,w)\,\pi(dw)
=
T_{\infty,\tau,\pi}(z).
\]
Therefore $T_{\infty,\tau,\pi}$ is continuous on
$\widehat{\mathbb C}$.
\end{proof}

The following proposition may be viewed as a state-dependent counterpart of
\cite[Proposition~4.23]{MR4002398}
within the framework of polynomial RSCCs.
It provides a measure-theoretic sufficient condition for the non-degeneracy of the stationary-averaged escaping probability.

\begin{proposition}
\label{prop:recovery-pi-essential}
Assume that $\Gamma_x$ is a compact subset of $\Poly_{+}$ for each $x\in X$.
Let $\pi\in\Stat$ and assume $\supp\pi=W$.
Assume moreover that for each $z\in\widehat{\mathbb C}$ the map
\[
W\ni w\longmapsto T_{\infty,\tilde{\tau}_w}(z)
\]
is continuous.
For $B\in\mathcal W$, define
\[
K_{B}^{\pi}(S_\tau)
:=
\left\{
z\in\widehat{\mathbb C}
\ : \
\pi\bigl(\{w\in B: z\in K_w(S_\tau)\}\bigr)=\pi(B)
\right\}.
\]
Suppose that there exist $B_1,B_2\in\mathcal W$ such that:
\begin{enumerate}[label=\textup{(\roman*)}]
\item\label{itm:ess-posmass}
$\pi(B_1)>0$ and $\pi(B_2)>0$;
\item\label{itm:ess-commonK1}
$K_{B_1}^{\pi}(S_\tau)\neq\emptyset$;
\item\label{itm:ess-commonK2}
$K_{B_2}^{\pi}(S_\tau)\neq\emptyset$;
\item\label{itm:ess-disjoint}
$K_{B_1}^{\pi}(S_\tau)\cap K_{B_2}^{\pi}(S_\tau)=\emptyset$.
\end{enumerate}
Then there exists $z_0\in\widehat{\mathbb C}$ such that
$T_{\infty,\tau,\pi}(z_0)<1$, and $T_{\infty,\tau,\pi}(z)>0$ for every
$z\in\widehat{\mathbb C}$.
\end{proposition}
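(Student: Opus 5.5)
Both conclusions reduce, via Proposition~\ref{prop:char-filledin-escape-compact}, to the relation between the zero set of $T_{\infty,\tilde\tau_w}$ and $K_w(S_\tau)$. The compactness assumption on each $\Gamma_x\subset\Poly_+$ is exactly what that proposition needs: for every $w$, $T_{\infty,\tilde\tau_w}(z)=0$ holds if and only if $z\in K_w(S_\tau)$. Throughout, write $Z_z:=\{w\in W: z\in K_w(S_\tau)\}=\{w: T_{\infty,\tilde\tau_w}(z)=0\}$. By the continuity hypothesis in $w$, this set is closed in $W$.

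\emph{Existence of $z_0$ with $T_{\infty,\tau,\pi}(z_0)<1$.} Take $z_0\in K^{\pi}_{B_1}(S_\tau)$, which is possible by~\ref{itm:ess-commonK1}. Then $\pi(B_1\cap Z_{z_0})=\pi(B_1)>0$, so on a set of positive $\pi$-measure we have $T_{\infty,\tilde\tau_w}(z_0)=0$. Since $T_{\infty,\tilde\tau_w}\le 1$ everywhere,
\[
T_{\infty,\tau,\pi}(z_0)\le 1-\pi(B_1\cap Z_{z_0})<1 .
\]
This step needs neither the continuity assumption nor $\supp\pi=W$.

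\emph{Positivity everywhere.} Fix $z$ and argue by contradiction: suppose $T_{\infty,\tau,\pi}(z)=0$. Then $T_{\infty,\tilde\tau_w}(z)=0$ for $\pi$-a.e.\ $w$, so $\pi(W\setminus Z_z)=0$. Here the continuity and the full support enter. The set $W\setminus Z_z=\{w:T_{\infty,\tilde\tau_w}(z)>0\}$ is open by continuity of $w\mapsto T_{\infty,\tilde\tau_w}(z)$. It has $\pi$-measure zero, and since $\supp\pi=W$ every nonempty open set has positive measure, so it must be empty. Hence $z\in K_w(S_\tau)$ for \emph{every} $w\in W$. In particular $\pi(\{w\in B_i: z\in K_w\})=\pi(B_i)$ for $i=1,2$, so $z\in K^{\pi}_{B_1}(S_\tau)\cap K^{\pi}_{B_2}(S_\tau)$. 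This contradicts~\ref{itm:ess-disjoint}, and therefore $T_{\infty,\tau,\pi}(z)>0$.

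The main (mild) obstacle is measurability. We need $w\mapsto T_{\infty,\tilde\tau_w}(z)$ to be measurable so that the integral and the sets $Z_z$ make sense, and the continuity assumption supplies this directly. The upgrade from $\pi$-a.e.\ vanishing to vanishing everywhere is exactly where continuity and $\supp\pi=W$ are used. The nonemptiness of $K^{\pi}_{B_2}$ (condition~\ref{itm:ess-commonK2}) is not strictly needed beyond making the hypotheses symmetric; the argument above uses only~\ref{itm:ess-posmass}, \ref{itm:ess-commonK1}, and~\ref{itm:ess-disjoint}.
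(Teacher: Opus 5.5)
Your proof is correct and is essentially the paper's argument. The bound $T_{\infty,\tau,\pi}(z_0)\le 1-\pi(B_1)<1$ is the same, and your positivity step is the contrapositive of the paper's direct one: the paper picks $i$ with $z\notin K^{\pi}_{B_i}(S_\tau)$, takes a point of $A=\{w\in B_i: z\notin K_w(S_\tau)\}$, and uses continuity plus $\supp\pi=W$ to find a neighborhood of positive $\pi$-measure on which $T_{\infty,\tilde\tau_w}(z)$ is bounded below. In your version the continuity/full-support upgrade is not actually needed beyond measurability, because $\pi(W\setminus Z_z)=0$ already gives $\pi(B_i\cap Z_z)=\pi(B_i)$ for $i=1,2$, which contradicts (iv) directly.
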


\begin{proof}
Choose $z_0\in K_{B_1}^{\pi}(S_\tau)$. By the definition of $K_{B_1}^{\pi}(S_\tau)$, we have
\[
\pi\bigl(\{w\in B_1: z_0\in K_w(S_\tau)\}\bigr)=\pi(B_1),
\]
and hence $z_0\in K_w(S_\tau)$ for $\pi$-almost every $w\in B_1$. By Proposition~\ref{prop:char-filledin-escape-compact}, it follows that
\[
T_{\infty,\tilde{\tau}_w}(z_0)=0
\quad
\text{for $\pi$-almost every } w\in B_1.
\]
Therefore
\[
\int_{B_1} T_{\infty,\tilde{\tau}_w}(z_0)\,\pi(dw)=0,
\]
and consequently
\[
T_{\infty,\tau,\pi}(z_0)
=\int_{W\setminus B_1} T_{\infty,\tilde{\tau}_w}(z_0)\,\pi(dw)
\le \pi(W\setminus B_1)
=1-\pi(B_1)
<1,
\]
where the strict inequality follows from~\ref{itm:ess-posmass}.

Now let $z\in\widehat{\mathbb C}$ be arbitrary. By assumption~\ref{itm:ess-disjoint}, there exists $i\in\{1,2\}$ such that $z\notin K_{B_i}^{\pi}(S_\tau)$, which means that
\[
\pi\bigl(\{w\in B_i: z\in K_w(S_\tau)\}\bigr)
<\pi(B_i).
\]
Since $\pi(B_i)>0$ by~\ref{itm:ess-posmass}, it follows that
\[
\pi\bigl(\{w\in B_i: z\notin K_w(S_\tau)\}\bigr)>0.
\]
Set
\[
A:=\{w\in B_i: z\notin K_w(S_\tau)\}.
\]
For every $w\in A$, Proposition~\ref{prop:char-filledin-escape-compact} yields $T_{\infty,\tilde{\tau}_w}(z)>0$. Define $f(w):=T_{\infty,\tilde{\tau}_w}(z)$. 
By hypothesis, $f$ is continuous on $W$. Since $\pi(A)>0$, the set $A$ is nonempty. Because $\supp\pi=W$, every nonempty open subset of $W$ has positive $\pi$-measure; in particular, we may choose $w_*\in A$. 
Set $\varepsilon:=f(w_*)/2>0$. By continuity of $f$ at $w_*$, there exists an open neighborhood $U$ of $w_*$ in $W$ such that $f\ge\varepsilon$ on $U$. As $w_*\in\supp\pi$, we have $\pi(U)>0$. Consequently,
\[
T_{\infty,\tau,\pi}(z)
=\int_W f(w)\,\pi(dw)
\ge \int_U f(w)\,\pi(dw)
\ge \varepsilon\,\pi(U)
>0.
\]
\end{proof}

\subsection{The Stationary-Averaged Escaping Functional}

\begin{definition}
\label{def:stationary-escape-functional}
For each $z\in\widehat{\mathbb C}$, we define a map
\[
\Phi_{\infty,\tau}(z)\colon \Stat \longrightarrow [0,1]
\]
by
\[
\Phi_{\infty,\tau}(z)(\pi)
:=
T_{\infty,\tau,\pi}(z),
\qquad \pi\in\Stat.
\]
We call $\Phi_{\infty,\tau}(z)$ the 
\emph{stationary-averaged escaping functional at $z$}.
We also define the \emph{lower} and \emph{upper stationary-averaged escaping
probabilities} by
\[
\underline{T}_{\infty,\tau}(z)
:=
\inf_{\pi\in\Stat} T_{\infty,\tau,\pi}(z),
\qquad
\overline{T}_{\infty,\tau}(z)
:=
\sup_{\pi\in\Stat} T_{\infty,\tau,\pi}(z).
\]
\end{definition}

\begin{proposition}
\label{prop:escape-spectrum-ergodic}
Fix $z\in\widehat{\mathbb C}$.
Let $\pi_1,\ldots,\pi_l$ be the ergodic stationary distributions given by Lemma~\ref{lem:structure-S}.
Then the stationary-averaged escaping functional
\[
\Phi_{\infty,\tau}(z)\colon \Stat \to [0,1]
\]
is affine.
Moreover, we have
\[
\underline T_{\infty,\tau}(z)
=
\min_{1\le j\le l}
T_{\infty,\tau,\pi_j}(z),
\qquad
\overline T_{\infty,\tau}(z)
=
\max_{1\le j\le l}
T_{\infty,\tau,\pi_j}(z).
\]
In particular, the range of the stationary-averaged escaping functional at $z$ is the compact interval given explicitly by
\[
\Phi_{\infty,\tau}(z)(\Stat)
=
\bigl[
\underline T_{\infty,\tau}(z),
\overline T_{\infty,\tau}(z)
\bigr]
\subset [0,1].
\]
\end{proposition}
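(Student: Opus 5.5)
The plan is to combine linearity of the integral in the measure with the simplex description of $\Stat$ from Lemma~\ref{lem:structure-S}.

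\textbf{Step 1: measurability.} Fix $z\in\widehat{\mathbb C}$ and set $f_z(w):=T_{\infty,\tilde\tau_w}(z)$. For the integrals $\int_W f_z\,d\pi$ to make sense, $f_z$ must be a bounded $\mathcal W$-measurable function on $W$. Bounded by $1$ is clear. For measurability, I would use Proposition~\ref{prop:approx-escape-Mtau}(i): $f_z$ is the pointwise limit of $w\mapsto (M_\tau^n\phi)(z,w)$, and these functions are continuous by Lemma~\ref{lem:Mtau-is-Markov}. That proposition assumes compact $\Gamma_x\subset\Poly_+$. Without that assumption, I would instead appeal to the implicit measurability already used in Definition~\ref{def:stationary-averaged-escape}. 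Either way, $f_z$ is treated as measurable, as the paper does in defining $T_{\infty,\tau,\pi}$.

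\textbf{Step 2: affinity.} Let $\mu_1,\mu_2\in\Stat$ and $t\in[0,1]$. Then $t\mu_1+(1-t)\mu_2\in\Stat$, because $\V$ is linear. Linearity of the integral in the measure gives
\[
\Phi_{\infty,\tau}(z)\bigl(t\mu_1+(1-t)\mu_2\bigr)=t\,\Phi_{\infty,\tau}(z)(\mu_1)+(1-t)\,\Phi_{\infty,\tau}(z)(\mu_2).
\]
The same argument extends to finite convex combinations. Hence, for $\pi=\sum_j\alpha_j\pi_j$,
\[
T_{\infty,\tau,\pi}(z)=\sum_{j=1}^l\alpha_j\,T_{\infty,\tau,\pi_j}(z).
\]

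\textbf{Step 3: extrema.} By Lemma~\ref{lem:structure-S}(i), every $\pi\in\Stat$ has the form $\sum_j\alpha_j\pi_j$ with $\alpha_j\ge0$ and $\sum_j\alpha_j=1$. The convex combination above therefore lies between $m:=\min_j T_{\infty,\tau,\pi_j}(z)$ and $M:=\max_j T_{\infty,\tau,\pi_j}(z)$. Both bounds are attained at the corresponding $\pi_j\in\Stat$, so the infimum and supremum equal $m$ and $M$ respectively.

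\textbf{Step 4: the range.} The inclusion $\Phi_{\infty,\tau}(z)(\Stat)\subset[m,M]$ follows from Step 3. For the reverse inclusion, pick indices $j_-$ and $j_+$ attaining $m$ and $M$. The segment $s\mapsto(1-s)\pi_{j_-}+s\pi_{j_+}$, for $s\in[0,1]$, stays in $\Stat$. By Step 2, along this segment the functional equals $(1-s)m+sM$, which sweeps out all of $[m,M]$. Hence the range is exactly $[\underline T_{\infty,\tau}(z),\overline T_{\infty,\tau}(z)]\subset[0,1]$.

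\textbf{Main obstacle.} The only delicate point is Step 1, the measurability of $w\mapsto T_{\infty,\tilde\tau_w}(z)$. The remaining steps are elementary convexity arguments. I would handle Step 1 by the limit representation through $M_\tau^n\phi$ and note the measurability convention otherwise.
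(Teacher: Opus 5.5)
Your proposal is correct and follows essentially the same route as the paper. Both decompose $\pi=\sum_j\alpha_j\pi_j$ via Lemma~\ref{lem:structure-S}, use linearity of the integral in the measure to get affinity and the extrema over the $\pi_j$, and use convexity of $\Stat$ for the interval. Your measurability remark in Step~1 and the explicit segment $(1-s)\pi_{j_-}+s\pi_{j_+}$ in Step~4 spell out points the paper leaves implicit.
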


\begin{proof}
By Lemma~\ref{lem:structure-S}, every $\pi\in\Stat$ can be written as
\[
\pi=\sum_{j=1}^l \alpha_j \pi_j,
\qquad
\alpha_j\ge0,
\quad
\sum_{j=1}^l \alpha_j=1.
\]
By linearity of the integral,
\[
T_{\infty,\tau,\pi}(z)
=
\sum_{j=1}^l \alpha_j
T_{\infty,\tau,\pi_j}(z),
\]
which shows that $\Phi_{\infty,\tau}(z)$ is affine.
Hence the minimum and maximum of $T_{\infty,\tau,\pi}(z)$ over $\Stat$
coincide with the minimum and maximum over the finite set
$\{T_{\infty,\tau,\pi_j}(z)\}_{j=1}^l$.
The interval identity follows from convexity of $\Stat$.
\end{proof}

\section{Examples}
\label{sec:examples}

In this section, we present three examples of polynomial RSCCs illustrating how state dependence influences both the statewise escaping behavior and the stationary-averaged escaping behavior. The first example exhibits a reinforcement effect that creates discontinuity of the statewise escaping probability, the second shows how this phenomenon disappears under a truncated state dynamics, and the third provides a model in which the stationary-averaged escaping probability is strictly positive everywhere but strictly less than \(1\) at some point.

\begin{example}
\label{ex:two-regimes-simplex}
We begin with a reinforcement-type model.

\medskip
\noindent
\textbf{Step 1.} Description of the polynomial RSCC.

Let $X:=\{0,1\}$ with $\mathcal X=\mathcal P(X)$.
Define two polynomials
\[
f_0(z):=z^2,
\qquad
f_1(z):=\frac{z^2}{2}.
\]
For each $x\in X$, set
\[
\tau_x:=\delta_{f_x}\in\mathfrak M_1(\Poly),
\qquad
\Gamma_x:=\supp(\tau_x)=\{f_x\}.
\]

Let $W:=[0,1]\subset \mathbb{R}$ with the Euclidean metric and $\mathcal W=\mathcal B(W)$.
Fix $\alpha\in[0,1)$ and define the update map
$u:W\times X\to W$ by
\[
u(p,x):=(1-\alpha)p+\alpha x.
\]
Define the transition probabilities by
\[
P(p,\{1\}):=p,
\qquad
P(p,\{0\}):=1-p.
\]

Thus, for each $\alpha\in[0,1)$, we obtain the polynomial RSCC
\[
S_{\tau,\alpha}
=
\{(W,\mathcal W),(X,\mathcal X),u,P,\{\Gamma_x\}_{x\in X}\}
\quad\text{on }\widehat{\mathbb C}.
\]
A direct computation shows that
$S_{\tau,\alpha}$ satisfies Assumption~\ref{ass:standing-cooperation}.

Here $\alpha\in[0,1)$ quantifies the strength of reinforcement in the state update:
larger $\alpha$ gives stronger dependence of the next state on the current symbol $x$.
In particular, when $\alpha=0$, the state variable remains constant.
For each fixed initial state $p_0\in[0,1]$, the RSCC restricted to the
singleton state space $\{p_0\}$ reduces to an i.i.d.\ random dynamical system
of polynomials as in \cite{MR2747724}.
For $\alpha>0$, the model becomes a genuinely reinforcement-driven,
state-dependent extension of that i.i.d.\ setting.

\medskip
\noindent
\textbf{Step 2.} Statewise Escaping Behavior.

We first consider the boundary states.

If $p=0$, then $P(p,\{0\})=1$, and hence only the map $f_0$ is admissible.
In this case the system reduces to the deterministic dynamical system generated by
the single polynomial $f_0(z)=z^2$.
Note that this discussion does not depend on the parameter $\alpha$.
Consequently, we can compute $T_{\infty,\tilde\tau_0}$ explicitly as
\[
T_{\infty,\tilde\tau_0}(z)
=
\begin{cases}
0 & \text{if } |z|\le 1,\\
1 & \text{if } |z|>1.
\end{cases}
\]

The Fatou set at the state $p=0$ is
\(
F_{0}(S_{\tau,\alpha})
=
\{\, z\in\widehat{\mathbb C} : |z|\neq 1 \,\},
\)
and on each connected component of $F_{0}(S_{\tau,\alpha})$
the function $T_{\infty,\tilde\tau_0}$ is constant.

Similarly, if $p=1$, then $P(p,\{1\})=1$, and hence only the map $f_1$ is admissible.
Again, the dynamics reduces to the deterministic system generated by
$f_1(z)=z^2/2$, independently of $\alpha$.
Hence we have
\[
T_{\infty,\tilde\tau_1}(z)
=
\begin{cases}
0 & \text{if } |z|\le 2,\\
1 & \text{if } |z|>2.
\end{cases}
\]

We next consider interior states.
Fix $p\in(0,1)$.

Independently of the parameter $\alpha$, the Fatou set at the state $p$
can be described explicitly as
\[
F_{p}(S_{\tau,\alpha})
=
\{\, z\in\widehat{\mathbb C} : |z|<1 \,\}
\;\cup\;
\{\, z\in\widehat{\mathbb C} : |z|>2 \,\}.
\]

On this set, the escaping probability can be computed directly.
If $|z|<1$, both maps $f_0$ and $f_1$ preserve the closed unit disk, and the orbit does not escape; hence $T_{\infty,\tilde\tau_p}(z)=0$.
If $|z|>2$, both maps eventually send $z$ outside every compact set, and the orbit escapes with probability one; hence $T_{\infty,\tilde\tau_p}(z)=1$.
A direct computation also shows that the escaping probability from the unit circle $\{\, z\in\widehat{\mathbb C} : |z|=1 \,\}$ is equal to zero.
Summarizing this argument, we obtain
\[
T_{\infty,\tilde\tau_p}(z)
=
\begin{cases}
0, & |z|\le1,\\[4pt]
1, & |z|>2.
\end{cases}
\]

The remaining non-trivial issue is to determine the continuity of
$T_{\infty,\tilde\tau_p}(z)$ on the set
\[
\{\, z\in\widehat{\mathbb C} : 1<|z|\le2 \,\},
\]
where an explicit computation is more delicate.

We first consider the case $\alpha=0$.
As discussed above, the system then reduces to an i.i.d.\ random dynamical system.
Moreover, for each interior state $p\in(0,1)$, the kernel Julia set is empty.
Hence, by \cite[Theorem~3.22]{MR2747724}, or alternatively by
Theorem~\ref{thm:CP-Riemann}, the function
$T_{\infty,\tilde\tau_p}$ is continuous on $\widehat{\mathbb C}$.

Next, we consider the case $\alpha\in(0,1)$ and verify that the above continuity fails. 
More precisely, we show that
\[
T_{\infty,\tilde\tau_p}(2)<1,
\]
and hence $T_{\infty,\tilde\tau_p}$ is discontinuous at $z=2$.

We decompose according to the event that the index $0$ is chosen at least once and its complement, namely the event that the index $1$ is chosen at every step.

The point $2\in\widehat{\mathbb C}$ tends to infinity if and only if the index $0$ is selected at least once.
Indeed, if $x_n=1$ for all $n$, then $f_1(2)=2$, and the orbit is identically equal to $2$, hence it does not escape.
If $x_n=0$ for some $n$, then from that time on the dynamics is governed by $f_0(z)=z^2$, and the orbit diverges to infinity.

It follows that
\[
T_{\infty,\tilde\tau_p}(2)
=
\mathbf P_p\bigl(x_n=0 \text{ for some } n\in\mathbb N\bigr)
=
1-\mathbf P_p\bigl(x_n=1 \text{ for all } n\in\mathbb N\bigr).
\]

We compute the latter probability.
Let
\[
p_1=p,
\qquad
p_n=(1-\alpha)p_{n-1}+\alpha \quad (n\ge2).
\]
This recursion reflects the update rule under the assumption that the index $1$ has been chosen at all previous steps.
Solving it gives
\[
p_n=1-(1-\alpha)^{\,n}(1-p).
\]

Therefore
\[
\mathbf P_p\bigl(x_n=1 \text{ for all } n\in\mathbb N\bigr)
=
\prod_{n=1}^\infty p_n
=
\prod_{n=1}^\infty
\bigl(1-(1-\alpha)^{\,n}(1-p)\bigr).
\]
Since
\[
\sum_{n=1}^\infty (1-\alpha)^{\,n}(1-p)
=
\frac{(1-p)(1-\alpha)}{\alpha}
<\infty,
\]
the infinite product is strictly positive. In particular,
\[
T_{\infty,\tilde\tau_p}(2)
=
1-
\prod_{n=1}^\infty
\bigl(1-(1-\alpha)^{\,n}(1-p)\bigr)
<1.
\]
Hence $T_{\infty,\tilde\tau_p}$ is discontinuous at $z=2$.

By Corollary~\ref{cor:continuity-statewise-escape}, this discontinuity implies that there exists $p\in[0,1]$ such that $J_{\ker,p}(S_\tau)\neq\emptyset$.
In fact, we have
\[
J_{\ker,p}(S_\tau)\neq\emptyset
\quad\text{for } p\in\{0,1\},
\qquad
J_{\ker,p}(S_\tau)=\emptyset
\quad\text{for } p\in(0,1).
\]

We also note the corresponding smallest filled-in Julia sets:
\[
K_p(S_{\tau,\alpha})
=
\begin{cases}
\{z\in\mathbb C:|z|\le 1\}, & p\in[0,1),\\[4pt]
\{z\in\mathbb C:|z|\le 2\}, & p=1.
\end{cases}
\]

\medskip
\noindent
\textbf{Step 3.} Stationary-Averaged Escaping Behavior.

We next consider stationary averaging.
Since both $0$ and $1$ are absorbing states for the induced state chain,
the Dirac measures $\delta_0$ and $\delta_1$ are stationary.
Hence every convex combination of them is stationary.
Conversely, no other stationary distribution exists.
Therefore
\[
\Stat
=
\left\{
\theta\,\delta_0+(1-\theta)\,\delta_1 \in \mathfrak{M}_1(W)
: 0\le \theta\le 1
\right\}.
\]

Let
\[
\pi_{\theta}:=\theta\delta_0+(1-\theta)\delta_1\in\Stat.
\]
Then, by Definition~\ref{def:stationary-averaged-escape},
\[
T_{\infty,\tau,\pi_\theta}(z)
=
\int_W T_{\infty,\tilde\tau_p}(z)\,\pi_\theta(dp)
=
\theta\,T_{\infty,\tilde\tau_0}(z)+(1-\theta)\,T_{\infty,\tilde\tau_1}(z).
\]
Using the explicit formulas for $T_{\infty,\tilde\tau_0}$ and
$T_{\infty,\tilde\tau_1}$, we obtain
\[
T_{\infty,\tau,\pi_\theta}(z)
=
\begin{cases}
0, & |z|\le 1,\\[4pt]
\theta, & 1<|z|\le 2,\\[4pt]
1, & |z|>2.
\end{cases}
\]

In particular, the discontinuity at
\(
2\in\widehat{\mathbb C}
\)
found above for the statewise escaping probability is not removed by
stationary averaging.

Moreover,
\[
\Phi_{\infty,\tau}(z)(\Stat)
=
\{\,T_{\infty,\tau,\pi}(z):\pi\in\Stat\,\}
=
\begin{cases}
\{0\}, & |z|\le 1,\\[4pt]
[0,1], & 1<|z|\le 2,\\[4pt]
\{1\}, & |z|>2.
\end{cases}
\]
Thus, on the annulus \(1<|z|\le 2\), stationary averaging fills the whole
interval \([0,1]\), while the discontinuity at \(z=2\) persists.

We finally note that Proposition~\ref{prop:recovery-pi-essential} does not apply to the present example. Indeed, there is no stationary distribution \(\pi\in\Stat\) such that
\[
\supp\pi=W=[0,1].
\]
Therefore the support assumption in Proposition~\ref{prop:recovery-pi-essential} fails.
\end{example}

%%%%%%%%%%%%%%%%%%%%%%%%%%%%%%%%%%%%%%%%%
%%%%%%%%%%%%%%%%%%%%%%%%%%%%%%%%%%%%%%%%
%%%%%%%%%%%%%%%%%%%%%%%%%%%%%%%%%%%%%%%%%
%%%%%%%%%%%%%%%%%%%%%%%%%%%%%%%%%%%%%%%%%
%%%%%%%%%%%%%%%%%%%%%%%%%%%%%%%%%%%%%%%%

\begin{example}
\label{ex:two-regimes-truncated}
We then consider a slight modification of the preceding example.

\medskip
\noindent
\textbf{Step 1.} Description of the polynomial RSCC.

Let $(X,\mathcal X)$, $(f_x)_{x\in X}$, $(\tau_x)_{x\in X}$, and $(\Gamma_x)_{x\in X}$
be as in Example~\ref{ex:two-regimes-simplex}, and modify only the state chain.
Fix $\varepsilon\in(0,\tfrac12)$ and $\alpha\in(0,1)$.
Let
\(
W_\varepsilon:=[\varepsilon,1-\varepsilon] \subset \mathbb{R}
\)
with the Euclidean metric and $\mathcal W_\varepsilon:=\mathcal B(W_\varepsilon)$.
For $p\in W_\varepsilon$, define the transition probabilities by
\[
P(p,\{1\}):=p,
\qquad
P(p,\{0\}):=1-p,
\]
and define the update map $u_\varepsilon:W_\varepsilon\times X\to W_\varepsilon$ by
\[
u_\varepsilon(p,x)
=
\begin{cases}
\varepsilon,
& (1-\alpha)p+\alpha x\le \varepsilon,\\[6pt]
(1-\alpha)p+\alpha x,
& \varepsilon < (1-\alpha)p+\alpha x <1-\varepsilon,\\[6pt]
1-\varepsilon,
& (1-\alpha)p+\alpha x\ge 1-\varepsilon.
\end{cases}
\]
Thus, for each $\varepsilon\in(0,\tfrac12)$ and $\alpha\in(0,1)$, we obtain the polynomial RSCC
\[
S_{\tau,\alpha}^{\varepsilon}
=
\{(W_\varepsilon,\mathcal W_\varepsilon),(X,\mathcal X),u_\varepsilon,P,\{\Gamma_x\}_{x\in X}\}
\quad\text{on }\widehat{\mathbb C},
\]
which satisfies Assumption~\ref{ass:standing-cooperation}.

Since $p\in[\varepsilon,1-\varepsilon]$, both indices are uniformly admissible:
\[
P(p,\{0\})\ge \varepsilon,
\qquad
P(p,\{1\})\ge \varepsilon
\quad\text{for all }p\in W_\varepsilon.
\]
In particular, the boundary absorption present in Example~\ref{ex:two-regimes-simplex}
does not occur.

\medskip
\noindent
\textbf{Step 2.} Statewise Escaping Behavior.

Fix $p\in W_\varepsilon$.
As in Example~\ref{ex:two-regimes-simplex}, the Fatou set at the state $p$ can be
described explicitly by
\[
F_{p}\bigl(S_{\tau,\alpha}^{\varepsilon}\bigr)
=
\{\, z\in\widehat{\mathbb C} : |z|<1 \,\}
\;\cup\;
\{\, z\in\widehat{\mathbb C} : |z|>2 \,\}.
\]
On this set the escaping probability can be computed directly, and we obtain
\[
T_{\infty,\tilde\tau_p}(z)
=
\begin{cases}
0, & |z|\le 1,\\[4pt]
1, & |z|>2.
\end{cases}
\]
This agrees with Lemma~\ref{lem:escape-local-constancy-Fatou}, which asserts that
$T_{\infty,\tilde\tau_p}$ is locally constant on
$F_{p}(S_{\tau,\alpha}^{\varepsilon})$.

The remaining issue is the behavior on the set \(\{\, z\in\widehat{\mathbb C} : 1<|z|\le 2 \,\}\).

In contrast to Example~\ref{ex:two-regimes-simplex}, the boundary states are removed
and both indices remain admissible. In particular,
\[
J_{\ker,p'}\bigl(S_{\tau,\alpha}^{\varepsilon}\bigr)=\emptyset
\qquad\text{for all }p'\in W_\varepsilon.
\]
Hence Corollary~\ref{cor:continuity-statewise-escape} applies, and we conclude that
\(T_{\infty,\tilde\tau_p}\) is continuous on \(\widehat{\mathbb C}\).
We now verify directly that
\[
T_{\infty,\tilde\tau_p}(2)=1,
\]
which should be compared with Example~\ref{ex:two-regimes-simplex}.

As before, the point \(2\in\widehat{\mathbb C}\) tends to infinity if and only if
the index \(0\) is selected at least once.
Therefore
\[
T_{\infty,\tilde\tau_p}(2)
=
\mathbf P_p\bigl(x_n=0 \text{ for some } n\in\mathbb N\bigr)
=
1-\mathbf P_p\bigl(x_n=1 \text{ for all } n\in\mathbb N\bigr).
\]

In contrast to Example~\ref{ex:two-regimes-simplex}, however, the latter
probability is zero.
Indeed, for every \(q\in W_\varepsilon=[\varepsilon,1-\varepsilon]\), we have
\(
P(q,\{1\})=q\le 1-\varepsilon.
\)
Hence, for every \(n\in\mathbb N\),
\[
\mathbf P_p(x_1=1,\ldots,x_n=1)
\le
(1-\varepsilon)^n.
\]
Letting \(n\to\infty\), we obtain
\(
\mathbf P_p\bigl(x_n=1 \text{ for all } n\in\mathbb N\bigr)=0,
\)
 and thus
\(T_{\infty,\tilde\tau_p}(2)=1\).

We also note that the corresponding smallest filled-in Julia set is independent
of \(p\in W_\varepsilon\) and of \(\alpha\), and is given by
\[
K_p\bigl(S_{\tau,\alpha}^{\varepsilon}\bigr)
=
\{z\in\mathbb C:|z|\le 1\}
\qquad\text{for every }p\in W_\varepsilon.
\]

\medskip
\noindent
\textbf{Step 3.} Stationary-Averaged Escaping Behavior.

Let $\Stat_\varepsilon$ denote the set of stationary distributions of the
induced state chain on $W_\varepsilon$.
Since $W_\varepsilon$ is compact and the associated Markov kernel $Q_\varepsilon$
is continuous, we have
\[
\Stat_\varepsilon\neq\emptyset.
\]
Moreover, \(\Stat_\varepsilon\) is a singleton. Indeed, for each \(x\in X\), the
map \(u_\varepsilon(\cdot,x)\) is Lipschitz with constant \(1-\alpha<1\), and
hence the induced state chain is contractive in the state variable. Therefore
the stationary distribution is unique, and we write
\[
\Stat_\varepsilon=\{\pi\}.
\]

It follows that, for every \(z\in\widehat{\mathbb C}\),
\[
\Phi_{\infty,\tau}(z)(\Stat_\varepsilon)=\{T_{\infty,\tau,\pi}(z)\}.
\]
Thus the stationary-averaged escaping range collapses to a single value at each
point. Moreover, by Theorem~\ref{thm:continuity-stationary-averaged}, the
function \(T_{\infty,\tau,\pi}\) is continuous on \(\widehat{\mathbb C}\).

We finally note that Proposition~\ref{prop:recovery-pi-essential} is not
applicable in the present example. Since the stationary distribution is unique,
the stationary-averaged escaping functional has singleton image at every point,
and the situation addressed in Proposition~\ref{prop:recovery-pi-essential} does not occur here.
\end{example}

\begin{example}
\label{ex:recovery-pi-essential-infinite}
We next present an infinite-state model to which
Proposition~\ref{prop:recovery-pi-essential} applies.

\medskip
\noindent
\textbf{Step 1.} Description of the polynomial RSCC.

Let
\(
W:=[0,1]\times\{0,1\}
\)
be endowed with the metric
\[
d_W\bigl((s,i),(t,j)\bigr):=|s-t|+|i-j|,
\qquad
s,t\in[0,1],\ \ i,j\in\{0,1\}.
\]
Then $W$ is a compact metric space.
Let
\(
X:=\{0,1,2,3\}
\)
with
\(
\mathcal X:=\mathcal P(X).
\)
Define two polynomials by
\[
f(z):=z^2,
\qquad
g(z):=(z-3)^2+3.
\]
For each $x\in X$, set
\[
\tau_0=\tau_1:=\delta_f,
\qquad
\tau_2=\tau_3:=\delta_g,
\]
and hence
\[
\Gamma_0=\Gamma_1=\{f\},
\qquad
\Gamma_2=\Gamma_3=\{g\}.
\]
Define the update map $u:W\times X\to W$ by
\[
u((s,i),0):=\left(\frac{s}{2},\,i\right),
\qquad
u((s,i),1):=\left(\frac{s+1}{2},\,i\right),
\]
\[
u((s,i),2):=\left(\frac{s}{2},\,1-i\right),
\qquad
u((s,i),3):=\left(\frac{s+1}{2},\,1-i\right).
\]
Then $u$ is continuous on all of $W\times X$.
Define the transition probabilities by
\[
P((s,0),\{0\})=P((s,0),\{1\})=\frac12,
\qquad
P((s,0),\{2\})=P((s,0),\{3\})=0,
\]
\[
P((s,1),\{2\})=P((s,1),\{3\})=\frac12,
\qquad
P((s,1),\{0\})=P((s,1),\{1\})=0.
\]

Thus, from a state in $[0,1]\times\{0\}$ only the indices $0,1$ are admissible,
whereas from a state in $[0,1]\times\{1\}$ only the indices $2,3$ are admissible.
In particular, the two components
\[
[0,1]\times\{0\}
\quad\text{and}\quad
[0,1]\times\{1\}
\]
are stochastically closed.

\medskip
\noindent
\textbf{Step 2.} Statewise Escaping Behavior.

Fix \(w\in W\). First suppose that \(w\in[0,1]\times\{0\}\). Then every admissible
index word from \(w\) uses only the symbols \(0\) and \(1\). Since
\(\tau_0=\tau_1=\delta_f\), every admissible finite polynomial composition from
\(w\) is of the form \(f^n\), and hence
\[
H_w(S_\tau)=\{f^n:n\in\mathbb N\}.
\]
Therefore
\[
K_w(S_\tau)=\{z\in\mathbb C:|z|\le 1\},
\qquad
T_{\infty,\tilde\tau_w}(z)=
\begin{cases}
0, & |z|\le 1,\\[4pt]
1, & |z|>1.
\end{cases}
\]

Next suppose that \(w\in[0,1]\times\{1\}\). Then every admissible index word from
\(w\) uses only the symbols \(2\) and \(3\). Since \(\tau_2=\tau_3=\delta_g\),
every admissible finite polynomial composition from \(w\) is of the form \(g^n\),
and hence
\[
H_w(S_\tau)=\{g^n:n\in\mathbb N\}.
\]
Therefore
\[
K_w(S_\tau)=\{z\in\mathbb C:|z-3|\le 1\},
\qquad
T_{\infty,\tilde\tau_w}(z)=
\begin{cases}
0, & |z-3|\le 1,\\[4pt]
1, & |z-3|>1.
\end{cases}
\]

Thus the statewise filled-in Julia set depends only on the component of \(W\)
containing the initial state:
\[
K_w(S_\tau)=
\begin{cases}
\{z\in\mathbb C:|z|\le 1\}, & w\in[0,1]\times\{0\},\\[4pt]
\{z\in\mathbb C:|z-3|\le 1\}, & w\in[0,1]\times\{1\}.
\end{cases}
\]
Moreover, for each fixed \(z\in\widehat{\mathbb C}\), the map
\[
W\ni w\longmapsto T_{\infty,\tilde\tau_w}(z)
\]
is continuous, since it is constant on each of the two components
\([0,1]\times\{0\}\) and \([0,1]\times\{1\}\), both of which are open and closed in \(W\).

\medskip
\noindent
\textbf{Step 3.} Stationary-Averaged Escaping Behavior.

In the present model, stationary distributions are not unique. Indeed, both
\([0,1]\times\{0\}\) and \([0,1]\times\{1\}\) are stochastically closed, and
\(m\otimes\delta_0\) and \(m\otimes\delta_1\) are stationary distributions,
where \(m\) denotes the Lebesgue measure on \([0,1]\). More precisely,
\[
\Stat
=
\left\{
\theta(m\otimes\delta_0)+(1-\theta)(m\otimes\delta_1)\in \mathfrak M_1(W)
:\ 0\le \theta\le 1
\right\}.
\]

Fix
\[
\pi:=\frac12(m\otimes\delta_0)+\frac12(m\otimes\delta_1)\in\Stat.
\]
Since both components have positive \(\pi\)-measure, it follows that
\(\supp\pi=W\).

By the explicit description of the statewise escaping probabilities obtained in
Step 2, we have
\[
T_{\infty,\tau,\pi}(z)
=
\frac12\,\mathbf 1_{\{|z|>1\}}+\frac12\,\mathbf 1_{\{|z-3|>1\}}.
\]
In particular,
\[
T_{\infty,\tau,\pi}(0)=\frac12<1.
\]
Moreover, the disks \(\{z\in\mathbb C:|z|\le 1\}\) and
\(\{z\in\mathbb C:|z-3|\le 1\}\) are disjoint. Hence every point
\(z\in\widehat{\mathbb C}\) lies outside at least one of them, and therefore
\[
T_{\infty,\tau,\pi}(z)>0
\qquad
\text{for every } z\in\widehat{\mathbb C}.
\]
Thus the conclusion of Proposition~\ref{prop:recovery-pi-essential} holds for
this choice of \(\pi\).

We now verify its assumptions. Set
\[
B_1:=[0,1]\times\{0\},
\qquad
B_2:=[0,1]\times\{1\}.
\]
Then
\[
\pi(B_1)=\pi(B_2)=\frac12>0.
\]
Furthermore, by Step 2,
\[
K_w(S_\tau)=\{z\in\mathbb C:|z|\le 1\}
\quad\text{for every } w\in B_1,
\]
and
\[
K_w(S_\tau)=\{z\in\mathbb C:|z-3|\le 1\}
\quad\text{for every } w\in B_2.
\]
Consequently,
\[
K_{B_1}^{\pi}(S_\tau)=\{z\in\mathbb C:|z|\le 1\},
\qquad
K_{B_2}^{\pi}(S_\tau)=\{z\in\mathbb C:|z-3|\le 1\}.
\]
In particular,
\[
K_{B_1}^{\pi}(S_\tau)\neq\emptyset,\qquad
K_{B_2}^{\pi}(S_\tau)\neq\emptyset,\qquad
K_{B_1}^{\pi}(S_\tau)\cap K_{B_2}^{\pi}(S_\tau)=\emptyset.
\]
Therefore all assumptions of Proposition~\ref{prop:recovery-pi-essential} are
satisfied.
\end{example}

\bibliographystyle{alpha}
\bibliography{references}
\end{document}